\newtheorem{thm}{Theorem}
\newtheorem{defn}[thm]{Definition}
\newtheorem{rmk}[thm]{Remark}
\newtheorem{cor}[thm]{Corollary}
\newtheorem{examp}[thm]{Example}
\newtheorem{prop}[thm]{Proposition}
\newtheorem{problem}{Problem}
\DeclareMathOperator{\Aye}{I}
\DeclareMathOperator{\I}{I}
\DeclareMathOperator{\Lat}{Lat}
\DeclareMathOperator{\Rel}{Rel}
\DeclareMathOperator{\Conv}{Conv}
\DeclareMathOperator{\Alg}{Alg}
\DeclareMathOperator{\uu}{u}
\begin{document}

\title[The convolution algebra]{The convolution algebra}

\author[Harding]{John Harding, Carol Walker, and Elbert Walker}
\address{Department of Mathematical Sciences\\
New Mexico State University\\
Las Cruces, NM 88003, USA}

\email{jharding@nmsu.edu, hardy@nmsu.edu, elbert@nmsu.edu\phantom{,}}

\dedicatory{Dedicated to the memory of Bjarni J\'{o}nsson}

\begin{abstract}
For a complete lattice $L$ and a relational structure $\mathfrak{X}=(X,(R_i)_I)$, we introduce the convolution algebra $L^{\mathfrak{X}}$. This algebra consists of the lattice $L^X$ equipped with an additional $n_i$-ary operation $f_i$ for each $n_i+1$-ary relation $R_i$ of $\mathfrak{X}$. For $\alpha_1,\ldots,\alpha_{n_i}\in L^X$ and $x\in X$ we set $f_i(\alpha_1,\ldots,\alpha_{n_i})(x)=\bigvee\{\alpha_1(x_1)\wedge\cdots\wedge\alpha_{n_i}(x_{n_i}):(x_1,\ldots,x_{n_i},x)\in R_i\}$. For the 2-element lattice $2$, $2^\mathfrak{X}$ is the reduct of the familiar complex algebra $\mathfrak{X}^+$ obtained by removing Boolean complementation from the signature. It is shown that this construction is bifunctorial and behaves well with respect to one-one and onto maps and with respect to products. When $L$ is the reduct of a complete Heyting algebra, the operations of $L^\mathfrak{X}$ are completely additive in each coordinate and $L^\mathfrak{X}$ is in the variety generated by $2^\mathfrak{X}$. Extensions to the construction are made to allow for completely multiplicative operations defined through meets instead of joins, as well as modifications to allow for convolutions of relational structures with partial orderings. Several examples are given. 
\end{abstract}

\subjclass{Primary 03G10; Secondary 03B45, 06D20}

\keywords{Complex algebra, Heyting algebra, Boolean algebra with operators, Kripke frame, convolution, type-2 truth value algebra, relation algebra}

\maketitle

\section{Introduction}

For a group $G$, its complex algebra $G^+$ is obtained by defining a multiplication on the power set of $G$ by setting $A\cdot B = \{ab:a\in A, b\in B\}$. Complex algebras of groups were introduced by Frobenius early in the development of group theory, and they play an essential role in the development of Tarski's \cite{Tarski} relation algebras. 

As part of Tarski's relation algebra program, J\'{o}nsson and Tarski \cite{J-T 1,J-T 2} extended the notion of complex algebras to apply to any relational structure $\mathfrak{X}=(X,(R_i)_I)$, that is, any set $X$ with family of relations $R_i$ $(i\in I)$. For such $\mathfrak{X}$, if for $i\in I$ the relation $R_i$ is $n_i+1$-ary, then relational image provides an $n_i$-ary operation $f_i$ on the power set of $X$. The complex algebra $\mathfrak{X}^+$ of this relational structure is the Boolean algebra of subsets of $X$ with additional operations $f_i$ $(i\in I)$. These operations are additive in each component, a property expressed by saying that they are operators. The complex algebras $\mathfrak{X}^+$ are thus primary examples of what are known as Boolean algebras with operators. 

Complex algebras of relational structures were reintroduced into modal logic by Kripke \cite{Kripke}. A relational structure $\mathfrak{X}=(X,(R_i)_I)$ with a single binary relation is a Kripke frame, the elements of $X$ are called possible worlds, and the binary relation is known as an accessibility relation. The complex algebra $\mathfrak{X}^+$ is a normal modal algebra. It is common in logical circles to define what is known as the $\Box$ operation on the power set of $X$ rather than the operation known as $\Diamond$ that is the one obtained through relational image by J\'{o}nsson and Tarski. But this is a matter of taste since $\Box$ and $\Diamond$ are inter-definable via the Boolean algebra operations. An important aspect of the use of Kripke frames in modal logic is the so-called correspondence theory. This relates first order properties of a Kripke frame $\mathfrak{X}=(X,R)$, such as transitivity, to equational properties of its complex algebra $\mathfrak{X}^+$. \pagebreak[3]

We may view the power set of a set $X$ as the set of functions from $X$ into the 2-element Boolean algebra $2=\{0,1\}$. So the complex algebra $\mathfrak{X}^+$ of a relational structure $\mathfrak{X}$ can be viewed as the Boolean algebra $2^X$ equipped with a family of additional operations. If we ignore the Boolean complementation, we obtain from the 2-element lattice, a complete lattice $2^X$ equipped with additional operations. 

This process can be generalized. For a relational structure $\mathfrak{X}=(X,(R_i)_I)$ and a complete lattice $L$, we define the convolution algebra $L^\mathfrak{X}=(L^X,(f_i)_I)$ of $\mathfrak{X}$ over $L$ as follows. The set $L^X$ of all functions from $X$ to $L$ is a complete lattice with the componentwise operations. For each $i\in I$ we use the $n_i+1$-ary relation $R_i$ to define an $n_i$-ary operation $f_i$ on this lattice where 
\begin{equation}
\tag{1}
f_i(\alpha_1,\ldots,\alpha_{n_i})(x)=\bigvee\{\alpha_1(x_1)\wedge\cdots\wedge\alpha_{n_i}(x_{n_i}):R_i(x_1,\ldots,x_{n_i},x)\}
\end{equation}
This algebra $L^\mathfrak{X}$ is called the convolution algebra since the operations $f_i$ are convolutions of the relations $R_i$ in much the same way polynomial multiplication as a sum of certain products is a convolution. For details see \cite[Defn.~1.3.3]{HWW}. 

The convolution algebra $L^\mathfrak{X}$ is a generalization of the complex algebra $2^\mathfrak{X}$. It is also a generalization of a construction from fuzzy set theory introduced by Zadeh \cite{Zadeh1,Zadeh2}. The real unit interval $\Aye=[0,1]$ with the binary operations $\max,\min$ and the unary operation $\neg x = 1-x$ of relative negation can be considered as a relational structure $\mathfrak{X}=(\Aye,\max,\min,\neg)$ with two ternary relations and one binary relation. The real unit interval $\Aye$ is also a complete lattice. With remarkable foresight, Zadeh defined the truth value algebra of type-2 fuzzy sets to be what we term here the convolution algebra $\Aye^\mathfrak{X}$. A detailed study of this algebra is found in \cite{HWW}. 

Another relative of the convolution algebra is found in Foster's work on bounded Boolean powers \cite{Ban,FosterI,FosterII}. In our terminology, Foster considered an algebra $A$ as a relational structure, and for a Boolean algebra $B$ considered all functions $\alpha:A\to B$ whose image is a finite partition of unity. Operations were defined as in (1) with the resulting algebra denoted $B[A]^*$. Foster did not need completeness of $B$ due to his restrictions to specialized functions. J\'{o}nsson \cite{Jonsson} later recognized that Foster's construction was simply the algebra of continuous functions from the Stone space of $B$ into $A$ with the discrete topology. 

Construction of the convolution algebra is bifunctorial. Let $\Lat$ be the category of complete lattices with morphisms being maps that preserve binary meets and arbitrary joins, let $\Rel_\tau$ be the class of all relational structures of a given type $\tau$ with morphisms being $p$-morphisms, and let $\Alg_{\tau}$ be the category of algebras of type $\tau$ with morphisms being homomorphisms. Then there is a bifunctor $\Conv:\Lat\times\Rel_\tau\to\Alg_{\tau}$ that is covariant in its first argument, and contravariant in its second argument, that acts on objects by taking $(L,\mathfrak{X})$ to the convolution algebra $L^\mathfrak{X}$. 

The strongest properties of convolution algebras are obtained when $L$ is well behaved. Complete, meet-continuous distributive lattices are ones that satisfy $x\wedge\bigvee_Jy_j=\bigvee_J(x\wedge y_j)$. They are exactly the lattice reducts of complete Heyting algebras. For $L$ the lattice reduct of a complete Heyting algebra, the operations of a convolution algebra $L^\mathfrak{X}$ are completely additive in each argument, so are complete operators. So the study of such convolution algebras fits with the setting of Boolean algebras with operators, or more generally, bounded distributive lattices with operators \cite{MaiBjarni}. In this setting, the negation-free fragment of the classical correspondence theory carries through intact. In particular, we show the following. 
\vspace{2ex}

\noindent {\bf Theorem. } {\em If $L$ is the lattice reduct of a complete Heyting algebra with two or more elements, then $L^\mathfrak{X}$ and $2^\mathfrak{X}$ satisfy the same equations for each relational structure $\mathfrak{X}$.}
\vspace{2ex}


Various modifications and extensions to these results are given. For a frame $\mathfrak{X}=(X,(R_i)_I)$, rather than defining operations $f_i$ on $L^X$ via (1), we can interchange the roles of meets and joins and define operations $g_i$ on $L^X$ for each $i\in I$ by setting 
\vspace{-2ex}

\begin{equation}
\tag{2}
g_i(\alpha_1,\ldots,\alpha_{n_i})(x)=\bigwedge\{\alpha_1(x_1)\vee\cdots\vee\alpha_{n_i}(x_{n_i}):R_i(x_1,\ldots,x_{n_i},x)\}
\end{equation}
\vspace{-1ex}

Let $L^{\mathfrak{X}-}$ be the collection of all functions from $X$ to $L$ with operations $g_i$ defined from the relations $R_i$ of $\mathfrak{X}$ via (2). As noted above, the algebra $2^{\mathfrak{X}}$ with the operations $f_i$ of (1) is isomorphic to the complex algebra of $\mathfrak{X}$ with the modal operators $\Diamond_i$ obtained from the relations $R_i$ for $i\in I$. The algebra $2^{\mathfrak{X}-}$ with the operations $g_i$ of (2) is isomorphic to the complex algebra of $\mathfrak{X}$ with the dual operators $\Box_i=\neg\Diamond_i\neg$ for each $i\in I$. One may include both families of operations $f_i$ and $g_i$ for $i\in I$, and we denote the resulting algebra $L^{\mathfrak{X}*}$.

The symmetry between (1) and (2) yields results when $L$ is a complete join-continuous distributive lattice, i.e. the dual of a complete Heyting algebra. In this case, the convolution algebra $L^{\mathfrak{X}-}$ satisfies all equations satisfied by $2^{\mathfrak{X}-}$. Thus when $L$ is a complete Heyting algebra, $L^\mathfrak{X}$ satisfies the same equations as the complex algebra $\mathfrak{X}^+$ in the signature $\wedge,\vee,0,1,(\Diamond_i)_I$, and when $L$ is a complete dual Heyting algebra, $L^{\mathfrak{X}-}$ satisfies the same equations as the complex algebra $\mathfrak{X}^-$ in the signature $\wedge,\vee,0,1,(\Box_i)_I$. 

When $L$ is a complete Heyting algebra and a complete dual Heyting algebra, $L^{\mathfrak{X}*}$ satisfies the same equations as the complex algebra in the signature $\wedge,\vee,0,1,(\Diamond_i)_I,(\Box_i)_I$ provided that the equation does not involve both a $\Diamond_i$ and $\Box_j$ for some $i,j\in I$. Under the stronger assumption that $L$ is complete and completely distributive, we show that $L^{\mathfrak{X}*}$ satisfies all equations of the complex algebra in the signature $\wedge,\vee,0,1,(\Diamond_i)_I,(\Box_i)_I$. This applies in particular when $L$ is a complete chain, or a finite distributive lattice. 

This paper is arranged in the following way. The second section provides the basic definitions and results. The third section provides the bifunctoriality of the convolution algebra construction and related matters. The fourth section provides results regarding preservations of equations and correspondence theory. The fifth section provides various generalizations of the convolution construction. This includes such features as a version for dual operators such as the modal $\Box$ operations, and versions corresponding to complex algebras of ordered relational structures. The final section contains several examples. 

\section{Basic definitions and properties}

For any natural number $n\geq 0$, an $n$-ary relation on a set $X$ is a subset $R\subseteq X^n$, and an $n$-ary operation on $X$ is a function $f:X^n\to X$. This includes the case $n=0$. A nullary relation on $X$ is either $\emptyset$ or $\{\emptyset\}$, which are respectively interpreted as false and true, and a nullary operation $f:X^0\to X$ is determined by its value on the sole element $\emptyset$ of $X^0$, and is often written $f(\,\,) = x$ and is interpreted as a constant in $X$. While nullary relations are perfectly well defined, they will play no role in this paper. Indeed, we will consider only $n+1$-ary relations on $X$ for $n\geq 0$, and will produce from these $n$-ary operations on $L^X$. 

\begin{defn}
A type over a set $I$ is a function $\tau:I\to \mathbb{N}$ from a set $I$ into the natural numbers. 
\end{defn}

Throughout the remainder of the paper, we will assume that we have a fixed type $\tau$ over a set $I$, and for each $i\in I$, denote $\tau(i)=n_i$. 

\begin{defn}
A relational structure of type $\tau$ is a pair $\mathfrak{X}=(X,(R_i)_I)$ consisting of a set $X$ and for each $i\in I$ an $n_i+1$-ary relation $R_i$ on $X$. An algebra of type $\tau$ is a pair $\mathcal{A}=(A,(f_i)_I)$ consisting of a set $A$ and for each $i\in I$ an $n_i$-ary operation $f_i$ on $A$. 
\end{defn}

An $n$-ary operation $f$ on a set $X$ is an $n+1$-ary relation on $X$ where the $n+1$-tuple $(x_1,\ldots,x_{n+1})$ belongs to the relation iff $f(x_1,\ldots,x_n)=x_{n+1}$. Of course not every $n+1$-ary relation comes about in this way from an $n$-ary operation. This discussion shows that an algebra of type $\tau$ is literally a relational structure of type $\tau$, although not conversely. 

\begin{defn} 
Given a relational structure $\mathfrak{X}=(X,(R_i)_i)$ of type $\tau$ and a complete lattice $L$, the convolution algebra of $\mathfrak{X}$ over $L$ is the algebra $L^\mathfrak{X}=(L^X,(f_i)_I)$ of type $\tau$, where for $i\in I$,  $\alpha_1,\ldots,\alpha_{n_i}\in L^X$ and $x\in X$
\vspace{-1ex}

\[ f_i(\alpha_1,\ldots,\alpha_{n_i})(x)\,\,=\,\,\bigvee\{\alpha_1(x_1)\wedge\cdots\wedge\alpha_{n_i}(x_{n_i}):(x_1,\ldots,x_{n_i},x)\in R_i\}\]
\end{defn} 

\begin{rmk}{\em 
For a relational structure $\mathfrak{X}=(X,(R_i)_I)$ of type $\tau$ and complete lattice $L$, each $n_i+1$-ary relation $R_i$ produces an $n_i$-ary operation $f_i$ on $L^X$. So $L^\mathfrak{X}$ is indeed an algebra of type $\tau$. However, there are additional operations of binary meet and join on $L$, and also nullary operations of constants $0$ and $1$ on $L$, and these lift to componentwise operations on $L^X$. So we can consider $L^\mathfrak{X}$ to also have bounded lattice structure in addition to its operations $(f_i)_I$. Later, it will often be the case that $L$ will have the further structure of a Heyting algebra, and then this Heyting structure will also lift componentwise to $L^\mathfrak{X}$. We choose to treat the basic type of $L^\mathfrak{X}$ as $\tau$, and discuss the additional componentwise structure as the situation dictates. 
}
\end{rmk}

While the convolution algebra has a predecessor in Zadeh's algebra of type-2 fuzzy sets \cite{HWW,Zadeh1,Zadeh2}, the general definition of the convolution algebra of a relational structure over a complete lattice seems new to this paper. The following definition of the complex algebra of a relational structure has a long history. It has its origin in the complex algebra of a group, but obtained its general form in a series of papers of J\'{o}nsson and Tarski \cite{J-T 1,J-T 2}. We note that there is a closely related notion in modal logic that gives a different version of the operations obtained. These two approaches are interdefinable, and we find the original approach of J\'{o}nsson and Tarski simpler to work with in the current setting. 

\begin{defn}
\label{complex}
Let $\mathfrak{X}=(X,(R_i)_I)$ be a relational structure of type $\tau$, and let $\mathcal{P}(X)$ be the power set of $X$.  Define an algebra $\mathfrak{X}^+=(\mathcal{P}(X),(g_i)_I)$ of type $\tau$, called the complex algebra of $\mathfrak{X}$, by setting for each $i\in I$ and each family of subsets $A_1,\ldots,A_{n_i}\subseteq X$ 
\vspace{-1.5ex}

\[ g_i(A_1,\ldots,A_{n_i})=\{x:\mbox{there exist }x_1\in A_1,\ldots, x_{n_i}\in A_{n_i}\mbox{ with } (x_1,\ldots,x_{n_i},x)\in R_i\}\]
\end{defn}

\begin{prop}
\label{curr}
For a relational structure $\mathfrak{X}$, the Boolean algebra isomorphism $\phi:2^X\to \mathcal{P}(X)$ given by $\phi(\alpha)=\{x:\alpha(x)=1\}$ is an isomorphism from the convolution algebra $2^\mathfrak{X}$ to the complex algebra $\mathfrak{X}^+$.
\end{prop}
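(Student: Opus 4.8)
The plan is to verify directly that $\phi$ intertwines the operations $f_i$ of $2^{\mathfrak{X}}$ with the operations $g_i$ of $\mathfrak{X}^+$, since it is already given (and standard) that $\phi$ is a Boolean algebra isomorphism between $2^X$ and $\mathcal{P}(X)$. So the only thing to check is that for each $i\in I$ and all $\alpha_1,\ldots,\alpha_{n_i}\in 2^X$ we have $\phi(f_i(\alpha_1,\ldots,\alpha_{n_i})) = g_i(\phi(\alpha_1),\ldots,\phi(\alpha_{n_i}))$.

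First I would fix $i\in I$, write $n=n_i$, and set $A_j=\phi(\alpha_j)=\{x:\alpha_j(x)=1\}$ for $j=1,\ldots,n$. By definition, $x\in\phi(f_i(\alpha_1,\ldots,\alpha_n))$ iff $f_i(\alpha_1,\ldots,\alpha_n)(x)=1$, and unwinding the convolution formula, this join over $\{(x_1,\ldots,x_n,x)\in R_i\}$ of the meets $\alpha_1(x_1)\wedge\cdots\wedge\alpha_n(x_n)$ equals $1$ in the two-element lattice precisely when at least one of those meets equals $1$. In turn, a single meet $\alpha_1(x_1)\wedge\cdots\wedge\alpha_n(x_n)$ equals $1$ in $2$ exactly when $\alpha_j(x_j)=1$ for every $j$, i.e. when $x_j\in A_j$ for every $j$. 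Hence $x\in\phi(f_i(\alpha_1,\ldots,\alpha_n))$ iff there exist $x_1\in A_1,\ldots,x_n\in A_n$ with $(x_1,\ldots,x_n,x)\in R_i$, which is exactly the condition defining membership of $x$ in $g_i(A_1,\ldots,A_n)$ from Definition~\ref{complex}. This establishes the required equality.

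Two small points deserve care but are routine. When $n_i=0$, the relation $R_i$ is a subset of $X^1=X$, the operation $f_i$ is the nullary constant whose value at $x$ is the empty join over $\{(x)\in R_i\}$ when $x\notin R_i$ and the empty meet (namely $1$) when $x\in R_i$; matching this against $g_i(\,)=\{x:(x)\in R_i\}=R_i$ shows the formula still holds, with the convention that an empty join is $0$ and an empty meet is $1$ in any complete lattice. Also, since $2$ has exactly two elements, every join appearing in formula (1) is attained, so no genuine infinite-lattice subtlety arises; the computation is purely the observation that joins and meets in $2$ are disjunctions and conjunctions of truth values.

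I expect no real obstacle here: the statement is essentially a bookkeeping identity, and the main ``content'' is simply recognizing that the convolution formula over the two-element lattice is the same logical expression as the relational-image formula defining the complex algebra. The only thing to be slightly careful about is handling the nullary case and the empty-join/empty-meet conventions consistently, and being explicit that $\phi$ is a bijection that already respects $\wedge,\vee,0,1$ (which the hypothesis grants), so that verifying compatibility with the $f_i$ alone suffices to conclude $\phi$ is an isomorphism $2^{\mathfrak{X}}\to\mathfrak{X}^+$ of algebras of type $\tau$.
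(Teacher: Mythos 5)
Your proposal is correct and follows essentially the same route as the paper's proof: evaluate both sides pointwise at $x\in X$ and use the fact that in $2$ a join is $1$ iff some joinand is $1$ and a finite meet is $1$ iff all meetands are $1$, with the nullary case handled via the empty-meet convention. The only difference is that you spell out the $n_i=0$ case slightly more explicitly than the paper does.
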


\begin{proof}
We must show that if $i\in I$, then $\phi(f_i(\alpha_1,\ldots,\alpha_{n_i}))=g_i(\phi(\alpha_1),\ldots,\phi(\alpha_{n_i}))$. Then for $x\in X$, making use of the fact that in the Boolean algebra $2$ a join is equal to 1 if and only if one of the joinends is equal to 1, we have the following. 
\vspace{-1.5ex}

\begin{align*}
x\in\phi(f_i(\alpha_1,\ldots,\alpha_{n_i})) & \mbox{ iff } f_i(\alpha_1,\ldots,\alpha_{n_i})(x) = 1\\
&\mbox{ iff } \alpha_1(x_1)\wedge\cdots\wedge\alpha_n(x_{n_i})=1\mbox{ for some } (x_1,\ldots,x_{n_i},x)\in R_i\\
&\mbox{ iff } \alpha_1(x_1)=1,\ldots,\alpha_n(x_{n_i})=1\mbox{ for some } (x_1,\ldots,x_{n_i},x)\in R_i\\
&\mbox{ iff } \mbox{ there are }x_1\in\phi(\alpha_1),\ldots,x_{n_i}\in\phi(\alpha_{n_i})\mbox{ with }(x_1,\ldots,x_{n_i},x)\in R_i\\
&\mbox{ iff } x\in g_i(\phi(\alpha_1),\ldots,\phi(\alpha_{n_i}))
\end{align*}
\vspace{-1ex}

\noindent Since the meet in 2 of $\emptyset$ is 1, the above reasoning holds also for the case when $n_i=0$. 
\end{proof}

Many of the stronger results about convolution algebras $L^\mathfrak{X}$ are restricted to the setting where $L$ is a complete meet-continuous distributive lattice, meaning that it satisfies $x\wedge\bigvee_Jy_j=\bigvee_J(x\wedge y_j)$. Such $L$ are exactly the lattice reducts of complete Heyting algebras. 

\begin{defn}
\label{defnoperator}
An $n$-ary operation $f$ on a lattice $L$ is additive in its $k^{th}$ component if for each finite family $(y_j)_J$ in $L$ and each $x_1,\ldots,x_{k-1},x_{k+1},\ldots,x_n$ we have 

\[ f(x_1,\ldots,x_{k-1},\bigvee_Jy_j,x_{k+1},\ldots,x_n)\,\,=\,\,\bigvee_Jf(x_1,\ldots,x_{k-1},y_j,x_{k+1},\ldots,x_n) \]

\noindent In a complete lattice, an operation is completely additive in the $k^{th}$ component if the same holds for an arbitrary family $(y_j)_J$. Finally, $f$ is called an operator if it is additive in each component, and a complete operator if it is completely additive in each component. 
\end{defn}

The subject of Boolean algebras with operators was initiated by J\'{o}nsson and Tarski in \cite{J-T 1,J-T 2}. A treatment of distributive lattices with operators is found in \cite{MaiBjarni}. 

\begin{prop}
\label{operators}
Let $L$ be a complete distributive lattice and $\mathfrak{X}$ be a relational structure. Then the operations $(f_i)_I$ of the convolution algebra $L^\mathfrak{X}$ and the binary join and meet operations and nullary bounds of $L^\mathfrak{X}$ are operators. If $L$ is the lattice reduct of a complete Heyting algebra, then these operations are complete operators. 
\end{prop}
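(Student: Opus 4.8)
The plan is to reduce every assertion to the (finite, resp.\ complete) distributive law in $L$, using throughout that the join, meet and bounds on $L^X$ are computed coordinatewise. I would dispose of the lattice operations first. The nullary bounds $0,1$ satisfy the additivity condition vacuously, having no components. Binary join is completely additive in each coordinate, since coordinatewise this amounts to the associativity and commutativity of joins in $L$. Binary meet reduces coordinatewise to the identity $\big(\bigvee_J b_j\big)\wedge c=\bigvee_J(b_j\wedge c)$ in $L$; for finite $J$ this is the finite distributive law (its $J=\emptyset$ instance recording normality, $0\wedge c=0$), so $\wedge$ is an operator as soon as $L$ is distributive, while for arbitrary $J$ it is exactly meet-continuity, so $\wedge$ is a complete operator precisely in the case that $L$ is the lattice reduct of a complete Heyting algebra.

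Next the operations $f_i$. Fix $i\in I$, write $n=n_i$ and $R=R_i$, fix a coordinate $k$, elements $\alpha_l\in L^X$ for $l\ne k$, a family $(\beta_j)_{j\in J}$ in $L^X$, and a point $x\in X$. Because $\big(\bigvee_J\beta_j\big)(x_k)=\bigvee_J\beta_j(x_k)$, the value $f_i\big(\alpha_1,\ldots,\alpha_{k-1},\bigvee_J\beta_j,\alpha_{k+1},\ldots,\alpha_n\big)(x)$ is the join, taken over all $(x_1,\ldots,x_n,x)\in R$, of the elements $\alpha_1(x_1)\wedge\cdots\wedge\big(\bigvee_J\beta_j(x_k)\big)\wedge\cdots\wedge\alpha_n(x_n)$ of $L$. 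I would then distribute the join $\bigvee_J\beta_j(x_k)$ past the remaining finite meet $\bigwedge_{l\ne k}\alpha_l(x_l)$ — legitimate by finite distributivity when $J$ is finite, and by meet-continuity when $J$ is arbitrary and $L$ is the reduct of a complete Heyting algebra — rewriting each such element as $\bigvee_J\big(\alpha_1(x_1)\wedge\cdots\wedge\beta_j(x_k)\wedge\cdots\wedge\alpha_n(x_n)\big)$. Interchanging this join over $J$ with the outer join over $R$ (a join of joins, valid in any complete lattice) and reading the result coordinatewise identifies it with $\big(\bigvee_J f_i(\alpha_1,\ldots,\beta_j,\ldots,\alpha_n)\big)(x)$, which is what is needed. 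The degenerate case $n_i=0$ makes $f_i$ nullary, so the condition is vacuous there; and throughout, the computation is consistent with the conventions $\bigvee\emptyset=0$ and $\bigwedge\emptyset=1$, so that $f_i(\ldots,0,\ldots)=0$ as it should be.

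I do not expect a genuine obstacle; the argument is bookkeeping with joins and meets in $L$. The single load-bearing step is the distribution of $\bigvee_J\beta_j(x_k)$ past the fixed meet $\bigwedge_{l\ne k}\alpha_l(x_l)$, and this is exactly where the dichotomy in the statement is born: finite distributivity of $L$ handles finite families and yields operators, whereas the complete-operator conclusion needs the infinite meet-over-join identity — equivalently, meet-continuity of $L$ — which is why that conclusion is asserted only under the stronger hypothesis that $L$ is the lattice reduct of a complete Heyting algebra.
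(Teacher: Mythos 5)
Your argument is correct and follows essentially the same route as the paper's proof: reduce everything to distributing a (finite or arbitrary) join past a finite meet in $L$ via finite distributivity or meet-continuity, then interchange the resulting join with the outer join over $R_i$, handling the lattice operations coordinatewise and the nullary case vacuously. The only cosmetic difference is that you treat a general $k$-th coordinate explicitly where the paper does the first coordinate and notes the rest is identical.
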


\begin{proof}
We establish the result for complete operators when $L$ is the lattice reduct of a complete Heyting algebra. This uses the infinite distributive law $a\wedge\bigvee_Jb_j=\bigvee_Ja\wedge b_j$ that holds in a complete Heyting algebra. The argument that these operations are finitely additive when $L$ is a complete distributive lattice is identical except that it uses only the finitary version of this law when $J$ is a finite set, and this holds in any distributive lattice. 

Let $i\in I$. We will show that $f_i$ is completely additive in its first component, the argument for the $k^{th}$ component is identical. Let $(\beta_j)_J$ be a family in $L^X$ and $\alpha_2,\ldots,\alpha_{n_i}\in L^X$. We must show that 
\[ f_i(\bigvee_J\beta_j,\alpha_2,\ldots,\alpha_{n_i})\,\,=\,\,\bigvee_J f_i(\beta_j,\alpha_2,\ldots,\alpha_{n_i})\]

\noindent Both are members of $L^X$, hence are functions from $X$ to $L$. So to show that they are equal, it suffices to show that their evaluations at an element $x\in X$ are equal. Using the definition of $f_i$, the fact that joins in $L^X$ are computed componentwise, and meet-continuity, we have the following. 

\begin{align*}
f_i(\bigvee_J\beta_j,\alpha_2,\ldots,\alpha_{n_i})(x) &\,\,=\,\, \bigvee\{(\bigvee_J\beta_j)(x_1)\wedge\alpha_2(x_2)\wedge\cdots\wedge\alpha_{n_i}(x_{n_i}):(x_1,\ldots,x_{n_i},x)\in R_i\}\\
&\,\,=\,\,\bigvee\{\bigvee_J(\beta_j(x_1)\wedge\alpha_2(x_2)\wedge\cdots\wedge\alpha_{n_i}(x_{n_i})):(x_1,\ldots,x_{n_i},x)\in R_i\}\\
&\,\,=\,\, \bigvee_J\bigvee\{(\beta_j(x_1)\wedge\alpha_2(x_2)\wedge\cdots\wedge\alpha_{n_i}(x_{n_i})):(x_1,\ldots,x_{n_i},x)\in R_i\}\\
&\,\,=\,\, \bigvee_J f_i(\beta_j,\alpha_2,\ldots,\alpha_{n_i})
\end{align*}
\vspace{0ex}

\noindent Thus $f_i$ is a complete operator. The binary join operation is a complete operator in any complete lattice, and the binary meet operation being a complete operator is a consequence of being meet-continuous and distributive. Nullary operations are vacuously operators. 
\end{proof}

A final property of convolution algebras is somewhat special to this topic, and will be applied later in the paper to obtain results on preservation of equations. 

\begin{defn}
\label{gwot}
A function $\delta\in L^X$ has finite support if $\{x:\delta(x)\neq 0\}$ is finite. An $n$-ary operation $f$ on $L^X$ is finitely supported if for each $\alpha_1,\ldots,\alpha_n\in L^X$
\vspace{-1ex}

\begin{equation*} 
f(\alpha_1,\ldots,\alpha_n)\,\,=\,\,\bigvee\{f(\delta_1,\ldots,\delta_n):\delta_i\leq\alpha_i\mbox{ and $\delta_i$ has finite support for each }i\leq n\}
\end{equation*}
\end{defn}
\vspace{1ex}

\begin{prop}
\label{finitely supported}
For $L$ a complete lattice and $\mathfrak{X}$ a relational structure, the operations $(f_i)_I$ of $L^\mathfrak{X}$ as well as the binary join and meet operations and the nullary bounds are finitely supported. 
\end{prop}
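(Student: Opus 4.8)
The plan is to verify the finite-support identity of Definition~\ref{gwot} directly for each type of operation, exploiting that every relevant operation is a complete operator (Proposition~\ref{operators}) together with the elementary fact that every $\alpha \in L^X$ is the join of its finitely supported restrictions. Concretely, for $x \in X$ let $\alpha^{(x)} \in L^X$ be the function agreeing with $\alpha$ at $x$ and equal to $0$ elsewhere; each $\alpha^{(x)}$ has finite support, and $\alpha = \bigvee_{x \in X} \alpha^{(x)}$ since joins in $L^X$ are computed componentwise. More generally, the finitely supported $\delta \le \alpha$ are exactly the joins of finitely many functions of the form $\gamma^{(x)}$ with $\gamma^{(x)} \le \alpha^{(x)}$, so the join on the right-hand side of the displayed equation in Definition~\ref{gwot} ranges over a family that is cofinal in, in particular includes, the singleton restrictions $\alpha_i^{(x_i)}$.

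First I would dispose of the lattice operations. For binary join, $\bigvee\{ \delta_1 \vee \delta_2 : \delta_i \le \alpha_i \text{ finitely supported}\} = \bigvee\{\delta_1 : \delta_1 \le \alpha_1\} \vee \bigvee\{\delta_2 : \delta_2 \le \alpha_2\} = \alpha_1 \vee \alpha_2$, using that $\alpha_i$ is the join of its finitely supported restrictions. For binary meet, one direction ($\ge$ the join, $\le \alpha_1 \wedge \alpha_2$) is immediate from monotonicity; for the reverse, note that for each $x$ the functions $\alpha_1^{(x)}$ and $\alpha_2^{(x)}$ are finitely supported restrictions of $\alpha_1$ and $\alpha_2$, and $\alpha_1^{(x)} \wedge \alpha_2^{(x)} = (\alpha_1 \wedge \alpha_2)^{(x)}$, so the join over all $x$ recovers $\alpha_1 \wedge \alpha_2$. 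The nullary bounds $0$ and $1$ are handled trivially: $0$ itself has finite support (its support is empty), and for $1$ the empty meet convention makes the nullary case vacuous, or one simply observes there are no arguments to restrict.

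The main case is the operations $f_i$. Here I would argue directly from the defining join formula. Fix $\alpha_1, \ldots, \alpha_{n_i} \in L^X$ and $x \in X$. Since $\alpha_k = \bigvee_{y} \alpha_k^{(y)}$ componentwise and $f_i$ is a complete operator in each coordinate by Proposition~\ref{operators}, we may expand
\[
f_i(\alpha_1,\ldots,\alpha_{n_i}) = \bigvee\{ f_i(\alpha_1^{(y_1)},\ldots,\alpha_{n_i}^{(y_{n_i})}) : y_1,\ldots,y_{n_i} \in X\},
\]
and each $\alpha_k^{(y_k)}$ has finite support and lies below $\alpha_k$, so the right-hand side is $\le$ the join in Definition~\ref{gwot}; the reverse inequality is monotonicity of $f_i$, which again follows from complete additivity. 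Alternatively, and perhaps more transparently, one can bypass the appeal to Proposition~\ref{operators} and just unwind the formula: $f_i(\alpha_1,\ldots,\alpha_{n_i})(x) = \bigvee\{\alpha_1(x_1)\wedge\cdots\wedge\alpha_{n_i}(x_{n_i}) : (x_1,\ldots,x_{n_i},x)\in R_i\}$, and each joinand $\alpha_1(x_1)\wedge\cdots\wedge\alpha_{n_i}(x_{n_i})$ equals $f_i(\alpha_1^{(x_1)},\ldots,\alpha_{n_i}^{(x_{n_i})})(x)$, exhibiting it as a value of $f_i$ on finitely supported functions dominated by the $\alpha_k$.

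I do not anticipate a serious obstacle; the only point requiring a little care is the case $n_i = 0$, where $f_i$ is a nullary operation (a constant in $L^X$) and the finite-support condition must be read with an empty list of arguments — the join in Definition~\ref{gwot} is then a join over the single empty tuple, yielding $f_i(\,)$ itself, so the identity holds trivially; this should be flagged in a closing sentence, paralleling the handling of the empty meet in Proposition~\ref{curr}.
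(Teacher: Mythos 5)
Your ``alternatively, and perhaps more transparently'' argument is exactly the paper's proof: the paper takes, for each tuple $(x_1,\ldots,x_{n_i},x)\in R_i$, the functions $\delta_k$ agreeing with $\alpha_k$ at $x_k$ and zero elsewhere (your $\alpha_k^{(x_k)}$), observes that each joinand $\alpha_1(x_1)\wedge\cdots\wedge\alpha_{n_i}(x_{n_i})$ is bounded above by $f_i(\delta_1,\ldots,\delta_{n_i})(x)$, and gets the reverse inequality from monotonicity of $f_i$ (which follows directly from the defining join formula). Your treatment of the lattice operations and the nullary cases likewise matches the paper's (the paper simply says the meet and join are componentwise and the bounds are vacuous).

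However, the route you present \emph{first} does not work at the stated level of generality, and you should not lead with it. Proposition~\ref{operators} gives that the $f_i$ are operators only when $L$ is a complete \emph{distributive} lattice, and complete operators only when $L$ is meet-continuous (the lattice reduct of a complete Heyting algebra); the expansion $f_i(\alpha_1,\ldots,\alpha_{n_i})=\bigvee\{f_i(\alpha_1^{(y_1)},\ldots,\alpha_{n_i}^{(y_{n_i})})\}$ via coordinatewise complete additivity uses the infinite distributive law $a\wedge\bigvee_J b_j=\bigvee_J(a\wedge b_j)$, which can fail in an arbitrary complete lattice. The present proposition is stated for \emph{any} complete lattice $L$, which is precisely why the paper argues by exhibiting each joinand individually rather than by invoking additivity. (Similarly, monotonicity of $f_i$ should be derived from the formula, not from ``complete additivity.'') Since you do supply the direct unwinding as a fallback, the proof stands, but the appeal to Proposition~\ref{operators} should be deleted rather than offered as the primary argument.
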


\begin{proof}
That the binary meet and join operations of $L^\mathfrak{X}$ are finitely supported follows from the fact that they are defined componentwise. That the bounds are finitely supported is trivial since they have no arguments. Suppose $i\in I$. Since $f_i$ is order preserving, it follows that 
\vspace{-2ex}

\begin{equation} \label{eqa}
f_i(\alpha_1,\ldots,\alpha_{n_i})\,\,\geq\,\,\bigvee\{f_i(\delta_1,\ldots,\delta_{n_i}):\delta_j\leq\alpha_j\mbox{ and $\delta_j$ has finite support for each }j\leq n_i\}
\end{equation}
\vspace{-1ex}

To show the other inequality, let $x\in X$. Then 
\vspace{-1ex}

\begin{equation} \label{eqb}
f_i(\alpha_1,\ldots,\alpha_{n_i})(x) \,\,=\,\, \bigvee\{\alpha_1(x_1)\wedge\cdots\wedge\alpha_{n_i}(x_{n_i}):(x_1,\ldots,x_{n_i},x)\in R_i\}
\end{equation}
\vspace{-1ex}

\noindent For $(x_1,\ldots,x_{n_i},x)\in R_i$ and $1\leq k\leq n_i$, let $\delta_k$ be the function that takes the same value as $\alpha_k$ at $x_k$ and is zero otherwise. Then $\delta_k\leq\alpha_k$ and is finitely supported. Further,  
\vspace{-1ex}

\[ \delta_1(x_1)\wedge\cdots\wedge\delta_{n_i}(x_{n_i})\,\,=\,\,\alpha_1(x_1)\wedge\cdots\wedge\alpha_{n_i}(x_{n_i})\] 
\vspace{-1ex}

\noindent It follows from the definition of $f_i(\delta_1,\ldots,\delta_{n_i})(x)$ that 
\vspace{-1ex}

\[ \alpha_1(x_1)\wedge\cdots\wedge\alpha_{n_i}(x_{n_i}) \,\,\leq\,\, f_i(\delta_1,\ldots,\delta_{n_i})(x) \]
\vspace{-1ex}

\noindent It follows from this and (\ref{eqb}) that when the left hand side of (\ref{eqa}) is evaluated at $x$, the result is less than or equal to the right hand side of (\ref{eqa}) evaluated at $x$. Since this is true for all $x\in X$, we obtain the other inequality in (\ref{eqb}), hence equality. 
\end{proof}

\section{Categorical aspects}

Throughout this section we assume that we are given a type $\tau$ over a set $I$. 

\begin{defn}
Let $\Lat$ be the category whose objects are complete lattices and whose morphisms are those maps between complete lattices that preserve bounds, binary meets and arbitrary joins. 
\end{defn}

We seek categorical results concerning the the construction $L^\mathfrak{X}$ for a complete lattice $L$ and relational structure $\mathfrak{X}$ of type $\tau$. There are many slightly different versions of these results, depending on the properties required of the morphisms between complete lattices $L$ and $M$. We establish results for one natural path below, and describe separately the modifications to other closely related situations. 

\begin{defn}
Let $\Alg_\tau$ be the category whose objects are complete lattices with a family of additional operations $(f_i)_I$ of type $\tau$ and whose morphisms are the homomorphisms between these bounded lattices with additional operations that preserve binary meets and joins. 
\end{defn}

\begin{prop}
\label{conv}
For a relational structure $\mathfrak{X}$ of type $\tau$, there is a functor 
\vspace{-1ex}

\[\Conv(\,\cdot\,,\mathfrak{X}):\Lat\to\Alg_\tau\]
\vspace{-1ex}

\noindent that takes a complete lattice $L$ to the convolution algebra $L^\mathfrak{X}$ and takes a morphism $\phi:L\to M$ to the homomorphism $\phi^\mathfrak{X}:L^\mathfrak{X}\to M^\mathfrak{X}$ given by $\phi^\mathfrak{X}(\alpha)=\phi\circ\alpha$. 
\end{prop}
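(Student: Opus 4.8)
The plan is to verify two things. First, that for every morphism $\phi\colon L\to M$ in $\Lat$ the postcomposition rule $\phi^\mathfrak{X}(\alpha)=\phi\circ\alpha$ defines a morphism $L^\mathfrak{X}\to M^\mathfrak{X}$ in $\Alg_\tau$; and second, that the assignments $L\mapsto L^\mathfrak{X}$ and $\phi\mapsto\phi^\mathfrak{X}$ respect identities and composition. The category-theoretic bookkeeping in the second part is purely formal, so the real content is the first part, and within it the verification that $\phi^\mathfrak{X}$ commutes with each operation $f_i$.

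First I would note that $\phi^\mathfrak{X}$ is well defined, since $\phi\circ\alpha$ is again a function $X\to M$. That $\phi^\mathfrak{X}$ preserves the bounded lattice structure is a routine pointwise check: meets, joins and the bounds $0,1$ in $L^X$ and $M^X$ are computed componentwise, and $\phi$ preserves binary meets, binary joins and the bounds by hypothesis, so e.g.\ $\phi^\mathfrak{X}(\alpha\wedge\beta)(x)=\phi(\alpha(x)\wedge\beta(x))=\phi(\alpha(x))\wedge\phi(\beta(x))=(\phi^\mathfrak{X}(\alpha)\wedge\phi^\mathfrak{X}(\beta))(x)$ for every $x\in X$, and similarly for $\vee$, $0$ and $1$.

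The substantive step is preservation of the operations $f_i$. Fix $i\in I$ and $\alpha_1,\dots,\alpha_{n_i}\in L^X$, and evaluate at an arbitrary $x\in X$. Using the definition of $f_i$, that $\phi$ preserves arbitrary joins (including the empty join), and that $\phi$ preserves binary meets and hence, by induction, all finite meets, one gets
\begin{align*}
\phi^\mathfrak{X}\bigl(f_i(\alpha_1,\dots,\alpha_{n_i})\bigr)(x)
&=\phi\Bigl(\bigvee\{\alpha_1(x_1)\wedge\cdots\wedge\alpha_{n_i}(x_{n_i}):(x_1,\dots,x_{n_i},x)\in R_i\}\Bigr)\\
&=\bigvee\{\phi(\alpha_1(x_1))\wedge\cdots\wedge\phi(\alpha_{n_i}(x_{n_i})):(x_1,\dots,x_{n_i},x)\in R_i\}\\
&=\bigvee\{(\phi\circ\alpha_1)(x_1)\wedge\cdots\wedge(\phi\circ\alpha_{n_i})(x_{n_i}):(x_1,\dots,x_{n_i},x)\in R_i\}\\
&=f_i\bigl(\phi^\mathfrak{X}(\alpha_1),\dots,\phi^\mathfrak{X}(\alpha_{n_i})\bigr)(x),
\end{align*}
as desired. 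The one place deserving separate attention is the nullary case $n_i=0$: there $f_i(\,)(x)$ equals the empty meet $1_L$ when $(x)\in R_i$ and the empty join $0_L$ otherwise, and these are sent to $1_M$, resp.\ $0_M$, exactly because $\phi$ preserves the bounds; so the identity above still holds. This finishes the proof that $\phi^\mathfrak{X}$ is a morphism in $\Alg_\tau$.

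It then remains to observe that $(\mathrm{id}_L)^\mathfrak{X}(\alpha)=\mathrm{id}_L\circ\alpha=\alpha$, so $(\mathrm{id}_L)^\mathfrak{X}=\mathrm{id}_{L^\mathfrak{X}}$, and that for $\phi\colon L\to M$ and $\psi\colon M\to N$ in $\Lat$ one has $(\psi\circ\phi)^\mathfrak{X}(\alpha)=(\psi\circ\phi)\circ\alpha=\psi\circ(\phi\circ\alpha)=\psi^\mathfrak{X}(\phi^\mathfrak{X}(\alpha))$, so $(\psi\circ\phi)^\mathfrak{X}=\psi^\mathfrak{X}\circ\phi^\mathfrak{X}$. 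I do not anticipate a genuine obstacle here: the only delicate point is the $f_i$ computation, and specifically the empty-meet/empty-join conventions in the nullary case, which is precisely where one needs $\phi$ to preserve bounds and not merely binary operations.
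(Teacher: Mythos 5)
Your proof is correct and follows essentially the same route as the paper's: a pointwise check that $\phi^\mathfrak{X}$ preserves the bounded lattice structure, followed by the key computation pushing $\phi$ through the join-of-meets defining $f_i$ using preservation of arbitrary joins and finite meets. Your extra care with the nullary case and the explicit verification of the functor axioms are fine additions but do not change the argument.
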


\begin{proof}
The assignment on objects and morphisms is well defined, yields objects of $\Alg_\tau$, and preserves composition. It remains to show for a morphism $\phi$, that $\phi^\mathfrak{X}$ is a homomorphism. 

The lower bound of $L^\mathfrak{X}$ is the function $0_{L^\mathfrak{X}}$ that takes the value $0_L$ for each $x\in X$. Since a morphism $\phi:L\to M$ in $\Lat$ preserves bounds, $\phi^\mathfrak{X}(0_{L^\mathfrak{X}})(x) = \phi (0_L)= 0_M$ for each $x\in X$. So $\phi^\mathfrak{X}(0_{L^\mathfrak{X}})=0_{M^\mathfrak{X}}$. Similarly, $\phi^\mathfrak{X}$ preserves the upper bound. For a family of functions $(\alpha_j)_J$ in $L^X$, the join of this family in $L^\mathfrak{X}$ is computed componentwise. So $\phi^\mathfrak{X}(\bigvee_J\alpha_j)(x)=\phi(\bigvee_J\alpha_j(x))$. Since $\phi$ preserves arbitrary joins and joins in $M^\mathfrak{X}$ are componentwise, this equals $(\bigvee_J\phi\circ\alpha_j)(x)$. Thus $\phi^\mathfrak{X}(\bigvee_J\alpha_j) = \bigvee_J\phi^\mathfrak{X}(\alpha_j)$. A similar argument shows that $\phi^\mathfrak{X}$ preserves binary meets. 

Suppose that $i\in I$, and let $f_i$ be the additional $n_i$-ary operation on $L^\mathfrak{X}$ and $g_i$ that on $M^\mathfrak{X}$. For $\alpha_1,\ldots,\alpha_{n_i}\in L^X$ we must show that 
\vspace{-1ex}

\[\phi^\mathfrak{X} (f_i(\alpha_1,\ldots,\alpha_{n_i})) \,\,=\,\, g_i(\phi^\mathfrak{X}(\alpha_1),\ldots,\phi^\mathfrak{X}(\alpha_{n_i}))\]
\vspace{-1ex}

\noindent Both sides of this equation are functions from $X$ to $M$, so to show equality, it suffices to show equality of the sides when evaluated at some $x\in X$. Using the definition of $\phi^\mathfrak{X}$, the definition of $f_i$, and that $\phi$ preserves arbitrary joins and finite meets, we have the following. 
\vspace{-1ex}

\[\phi^\mathfrak{X}(f_i(\alpha_1,\ldots,\alpha_{n_i}))(x) \,\, = \,\, \bigvee\{\phi(\alpha_1(x_1))\wedge\cdots\wedge\phi(\alpha_{n_i}(x_{n_i})):(x_1,\ldots,x_{n_i},x)\in R_i\}\]
\vspace{-1ex}

\noindent This is equal to $g_i(\phi^\mathfrak{X}(\alpha_1),\ldots,\phi^\mathfrak{X}(\alpha_n))(x)$. 
\end{proof}

\begin{rmk}{\em 
There are many modifications that can be made to this result. The objects obtained in the image of the functor $\Conv(\,\cdot\,,\mathfrak{X})$ are not only bounded lattices with additional operations of type $\tau$, but are complete lattices. For a map $\phi:L\to M$, we require that $\phi$ preserve arbitrary joins and finite meets to obtain that $\phi^\mathfrak{X}$ is compatible with the operations $f_i$ $(i\in I)$. Otherwise, properties of $\phi$ transfer directly to properties of $\phi^\mathfrak{X}:L^\mathfrak{X}\to M^\mathfrak{X}$. If $\phi$ preserves bounds, so does $\phi^\mathfrak{X}$, and if $\phi$ preserves arbitrary meets, so does $\phi^\mathfrak{X}$. This is a consequence of the fact that these operations are coordinatewise in both $L^\mathfrak{X}$ and $M^\mathfrak{X}$. 
}
\end{rmk}

\begin{prop}
\label{prod}
Let $\mathfrak{X}$ be a relational structure of type $\tau$ and $(L_j)_J$ a family of complete lattices. Then there is an isomorphism that preserves the bounded lattice operations and the additional operations of type $\tau$ given by $\Phi:(\prod_JL_j)^\mathfrak{X}\to\prod_J(L_j^\mathfrak{X})$ where
\vspace{-1ex}

\[ \Phi(\alpha)(j)(x) \,\, = \,\,\alpha(x)(j) \]
\end{prop}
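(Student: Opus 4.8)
The plan is to verify directly that the map $\Phi$ is a well-defined bijection and that it respects all the operations in question. First I would check that $\Phi$ is well defined: for $\alpha\in(\prod_JL_j)^X$, the assignment $x\mapsto\alpha(x)$ is a function from $X$ into $\prod_JL_j$, so for each $j\in J$ the rule $x\mapsto\alpha(x)(j)$ defines an element of $L_j^X=L_j^\mathfrak{X}$, and the whole family $(\,x\mapsto\alpha(x)(j)\,)_{j\in J}$ is an element of $\prod_J(L_j^\mathfrak{X})$. To see that $\Phi$ is a bijection I would exhibit the obvious inverse: given $\beta\in\prod_J(L_j^\mathfrak{X})$, define $\Psi(\beta)\in(\prod_JL_j)^X$ by $\Psi(\beta)(x)(j)=\beta(j)(x)$; it is immediate from the definitions that $\Phi\circ\Psi$ and $\Psi\circ\Phi$ are the respective identities, since each merely swaps the order in which one feeds $x$ and $j$.

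Next I would check preservation of the bounded lattice structure. The bounds and the binary meet and join of both $(\prod_JL_j)^\mathfrak{X}$ and $\prod_J(L_j^\mathfrak{X})$ are computed coordinatewise, and in $\prod_JL_j$ they are also computed coordinatewise over $J$; chasing an element $x\in X$ and an index $j\in J$ through both sides reduces the claimed equality to the corresponding identity in $L_j$. In fact the same coordinatewise reasoning shows more: arbitrary joins and meets are preserved as well, so $\Phi$ is an isomorphism of complete lattices, not merely of bounded lattices.

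The substantive part is preservation of the operations $f_i$ for $i\in I$. Fix $i\in I$, write $n=n_i$, let $\alpha_1,\ldots,\alpha_n\in(\prod_JL_j)^X$, fix $x\in X$ and $j\in J$. On one side we must evaluate $\Phi\bigl(f_i(\alpha_1,\ldots,\alpha_n)\bigr)(j)(x)$, which by definition of $\Phi$ equals $f_i(\alpha_1,\ldots,\alpha_n)(x)(j)$; by the definition of $f_i$ in $(\prod_JL_j)^\mathfrak{X}$ this is the $j$-th coordinate of $\bigvee\{\alpha_1(x_1)\wedge\cdots\wedge\alpha_n(x_n):(x_1,\ldots,x_n,x)\in R_i\}$, and since both the join and the meets in $\prod_JL_j$ are coordinatewise, this equals $\bigvee\{\alpha_1(x_1)(j)\wedge\cdots\wedge\alpha_n(x_n)(j):(x_1,\ldots,x_n,x)\in R_i\}$. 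On the other side, $f_i$ applied to the tuple $\Phi(\alpha_1),\ldots,\Phi(\alpha_n)$ in $\prod_J(L_j^\mathfrak{X})$ is coordinatewise over $J$, so its $j$-th coordinate is $f_i\bigl(\Phi(\alpha_1)(j),\ldots,\Phi(\alpha_n)(j)\bigr)$ computed in $L_j^\mathfrak{X}$, and evaluating at $x$ and using the definition of $f_i$ there together with $\Phi(\alpha_k)(j)(x_k)=\alpha_k(x_k)(j)$ gives exactly the same join $\bigvee\{\alpha_1(x_1)(j)\wedge\cdots\wedge\alpha_n(x_n)(j):(x_1,\ldots,x_n,x)\in R_i\}$. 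The two sides therefore agree. The case $n=0$ is handled by the same computation, with the empty meet interpreted as $1$ in each $L_j$, matching the empty meet in $\prod_JL_j$.

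The only real obstacle is bookkeeping: one is juggling three nested indices — the point $x\in X$, the arity index running over $\{1,\ldots,n_i\}$, and the product index $j\in J$ — and must repeatedly invoke that both joins and meets in a product of complete lattices are formed coordinatewise, as is every operation of a convolution algebra by its definition and by the coordinatewise nature of the lattice structure on $L^X$. No deep idea is needed beyond the associativity/commutativity of reindexing a double join; once the notation is set up carefully every step is a direct unwinding of definitions.
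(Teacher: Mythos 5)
Your proposal is correct and follows essentially the same route as the paper: declare the lattice isomorphism part routine/coordinatewise, then verify preservation of each $f_i$ by evaluating at $x\in X$ and $j\in J$ and using that joins and meets in $\prod_J L_j$ and the operations of $\prod_J(L_j^{\mathfrak{X}})$ are all computed componentwise. Your added details (the explicit inverse $\Psi$ and the $n_i=0$ case) are fine but not needed beyond what the paper records.
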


\begin{proof}
It is routine that $\Phi$ is an isomorphism of bounded lattices. Suppose $i\in I$. We use $f_i$ for the operation of $(\prod_JL_j)^\mathfrak{X}$, for $j\in J$ we use $f_j^i$ for the operation of $L_j^\mathfrak{X}$, and we use $g_i$ for the operation of $\prod_JL_j^\mathfrak{X}$. Let $\alpha_1,\ldots,\alpha_{n_i}\in (\prod_JL_j)^\mathfrak{X}$, $x\in \mathfrak{X}$, and $j\in J$. Then \vspace{-1ex}

\[f_i(\alpha_1,\ldots,\alpha_{n_i})(x) \,\,=\,\, \bigvee\{\alpha_1(x_1)\wedge\cdots\wedge\alpha_{n_i}(x_{n_i}):(x_1,\ldots,x_{n_i},x)\in R_i\}\]
\vspace{0ex}

\noindent Since joins in the lattice $\prod_JL_j$ are componentwise, 

\[f_i(\alpha_1,\ldots,\alpha_{n_i})(x)(j) \,\,=\,\, \bigvee\{\alpha_1(x_1)(j)\wedge\cdots\wedge\alpha_{n_i}(x_{n_i})(j): (x_1,\ldots,x_{n_i},x)\in R_i\}\]
\vspace{0ex}

\noindent So the definition of $\Phi$ gives 

\[\Phi(f_i(\alpha_1,\ldots,\alpha_{n_i}))(j)(x) \,\,=\,\, \bigvee\{\Phi(\alpha_1)(j)(x_1)\wedge\cdots\wedge\Phi(\alpha_{n_i})(j)(x_{n_i}): (x_1,\ldots,x_{n_i},x)\in R_i\}\]
\vspace{0ex}

\noindent So the definition of $f_i^j$ gives 

\[ \Phi(f_i(\alpha_1,\ldots,\alpha_{n_i}))(j)(x) \,\,=\,\, f_i^j(\Phi(\alpha_1)(j),\ldots,\Phi(\alpha_{n_i})(j))(x)\]
\vspace{0ex}

\noindent Since this holds for each $x\in X$ we have $\Phi(f_i(\alpha_1,\ldots,\alpha_{n_i}))(j) = f_i^j(\Phi(\alpha_1)(j),\ldots,\Phi(\alpha_{n_i})(j))$. Then, since this holds for each component  $j\in J$, it follows from the fact that the operation $g_i$ of the product $\prod_JL_j^A$ is componentwise, that $\Phi(f_i(\alpha_1,\ldots,\alpha_{n_i}))=g_i(\Phi(\alpha_1),\ldots,\Phi(\alpha_{n_i}))$. 
\end{proof}

\begin{prop}
\label{reflects}
Let $\mathfrak{X}$ be a relational structure of type $\tau$ that has at least one element, and let $\phi:L\to M$ be a morphism in the category $\Lat$. 
Then $\phi$ is one-one iff $\phi^\mathfrak{X}$ is one-one, and $\phi$ is onto iff $\phi^\mathfrak{X}$ is onto. 
\end{prop}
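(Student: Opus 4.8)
The plan is to work directly from the explicit formula $\phi^\mathfrak{X}(\alpha)=\phi\circ\alpha$ established in Proposition \ref{conv}, and to treat the four implications separately. Two of them (the ones sending properties of $\phi$ to properties of $\phi^\mathfrak{X}$) are immediate pointwise arguments and do not use nonemptiness of $X$ at all; the other two (the ``reflecting'' directions) are exactly where the hypothesis that $\mathfrak{X}$ has at least one element is consumed. Since one-one and onto refer only to the underlying function, the lattice-morphism properties of $\phi$ play no role in the argument beyond guaranteeing, via Proposition \ref{conv}, that $\phi^\mathfrak{X}$ is a well-defined map.

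First I would dispose of the forward directions. If $\phi$ is one-one and $\phi^\mathfrak{X}(\alpha)=\phi^\mathfrak{X}(\beta)$, then $\phi(\alpha(x))=\phi(\beta(x))$ for every $x\in X$, so $\alpha(x)=\beta(x)$ for every $x$, whence $\alpha=\beta$; thus $\phi^\mathfrak{X}$ is one-one. If $\phi$ is onto and $\beta\in M^X$, then, using the axiom of choice, for each $x\in X$ pick $a_x\in L$ with $\phi(a_x)=\beta(x)$ and define $\alpha\in L^X$ by $\alpha(x)=a_x$; then $\phi^\mathfrak{X}(\alpha)=\phi\circ\alpha=\beta$, so $\phi^\mathfrak{X}$ is onto.

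For the reflecting directions I would fix an element $x_0\in X$, which exists precisely because $\mathfrak{X}$ has at least one element, and use constant functions: for $a\in L$ let $\bar a\in L^X$ be the function with constant value $a$, so that $\phi^\mathfrak{X}(\bar a)=\overline{\phi(a)}$. If $\phi^\mathfrak{X}$ is one-one and $\phi(a)=\phi(b)$, then $\phi^\mathfrak{X}(\bar a)=\overline{\phi(a)}=\overline{\phi(b)}=\phi^\mathfrak{X}(\bar b)$, hence $\bar a=\bar b$, and evaluating at $x_0$ gives $a=b$; thus $\phi$ is one-one. If $\phi^\mathfrak{X}$ is onto and $m\in M$, choose $\alpha\in L^X$ with $\phi^\mathfrak{X}(\alpha)=\bar m$; then $\phi(\alpha(x_0))=\bar m(x_0)=m$, so $\phi$ is onto.

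I do not expect a genuine obstacle here: the statement amounts to the fact that the set-level power functor $(\,\cdot\,)^X$ preserves and reflects injections and surjections whenever $X$ is nonempty. The only points deserving care are to remember that nonemptiness of $X$ is actually used (and is not removable: if $X=\emptyset$ then $L^X$ and $M^X$ are both one-element sets, so $\phi^\mathfrak{X}$ is a bijection regardless of $\phi$), and that the axiom of choice is invoked to build the preimage function in the surjectivity argument. One could replace the constant functions in the reflecting arguments by functions that agree with a fixed base value off $x_0$, but the constant functions are the cleanest choice.
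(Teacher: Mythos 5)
Your proof is correct and follows the same route the paper intends: the paper simply declares the result a trivial consequence of the definition $\phi^\mathfrak{X}(\alpha)=\phi\circ\alpha$, and your pointwise arguments (with constant functions and a fixed $x_0\in X$ for the reflecting directions) are exactly the details being elided. Your observations that nonemptiness of $X$ is genuinely needed and that the axiom of choice enters in the surjectivity-transfer step are accurate refinements not spelled out in the paper.
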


\begin{proof}
This is a trivial consequence of the definition of $\phi^\mathfrak{X}$ in Proposition~\ref{conv}. 
\end{proof}

\begin{rmk}{\em 
It is easily seen that the categories $\Lat$ and $\Alg_\tau$ have products that are given by the usual cartesian products. So Proposition~\ref{prod} says that for a fixed relational structure $\mathfrak{X}$, the functor $\Conv(\,\cdot\,,\mathfrak{X})$ preserves products. Proposition~\ref{reflects} states that this functor $\Conv(\,\cdot\,,\mathfrak{X})$ also preserves and reflects injective and surjective maps. 
}
\end{rmk}

Before shifting focus to categories of relational structures, we discuss a modification of the convolution construction to apply to general lattices without any completeness conditions under the restriction that the relational structure $\mathfrak{X}$ is finite, or more generally, that $\mathfrak{X}$ is what we call predecessor-finite. 

\begin{defn}
A relational structure $\mathfrak{X}=(X,(R_i)_I)$ of type $\tau$ is predecessor-finite if for each $x\in X$ and each $i\in I$, the set of predecessors $\{(x_1,\ldots,x_{n_i})\in X^{n_i}:(x_1,\ldots,x_{n_i},x)\in R_i\}$ is finite. 
\end{defn}

\begin{defn}
Let $\mathfrak{X}$ be a predecessor-finite relational structure of type $\tau$. For a lattice $L$, define a lattice $L^\mathfrak{X}$ with additional operations $(f_i)_I$ of type $\tau$ by setting 
\vspace{-1ex}

\[ f_i(\alpha_1,\ldots,\alpha_{n_i})(x)=\bigvee\{\alpha_1(x_1)\wedge\cdots\wedge\alpha_{n_i}(x_{n_i}):(x_1,\ldots,x_{n_i},x)\in R_i\}\]
\vspace{-1ex}

\noindent Note that this join is a finite join since $\mathfrak{X}$ is predecessor-finite.
\end{defn}

\begin{rmk}\label{hg}{\em 
This allows for numerous small modifications to our results. For each predecessor-finite relational structure $\mathfrak{X}$ of type $\tau$, there is a functor from the category of lattices and lattice homomorphisms to the category of lattices with additional operations of type $\tau$ and their homomorphisms. This functor again preserves products, and preserves and reflects one-one and onto maps. Corresponding results hold for the convolutions of bounded lattices and predecessor-finite relational structures. 
}
\end{rmk}

We shift our focus in the consideration of categorical aspects to relational structures and the morphisms between them, the so-called p-morphisms. For a complete account, see \cite{Goldblatt}, but the essential ideas are simple. There is a categorical duality between the category of sets and functions and the category of power set Boolean algebras and the complete homomorphisms between them. This duality takes a function $p:X\to Y$ to the complete Boolean algebra homomorphism $p^{-1}$ from the power set of $Y$ to the power set of $X$. For relational structures $\mathfrak{X}$ and $\mathfrak{Y}$ of type $\tau$, the functions $p$ from $X$ to $Y$ with $p^{-1}$ giving a homomorphism from the complex algebra $\mathfrak{Y}^+$ to the complex algebra $\mathfrak{X}^+$ are exactly the $p$-morphisms from $\mathfrak{X}$ to $\mathfrak{Y}$ \cite{Goldblatt}. 

\begin{defn}
\label{p-morphism}
For relational structures $\mathfrak{X}=(X,(R_i)_I)$ and $\mathfrak{Y}=(Y,(S_i)_I)$ of type $\tau$, a function $p:X\to Y$ is a $p$-morphism if for each $i\in I$ and $x\in X$,
\vspace{-1ex}

\[ \{(y_1,\ldots,y_{n_i}):(y_1,\ldots,y_{n_i},p(x))\in S_i\}\,\,=\,\,\{(p(x_1),\ldots,p(x_{n_i})):(x_1,\ldots,x_{n_i},x)\in R_i\}\]
\vspace{-1ex}

\noindent For a type $\tau$, we let $\Rel_\tau$ be the category whose objects are the relational structures of type $\tau$ and whose morphisms are the $p$-morphisms between them. 
\end{defn}

\begin{prop}
For a bounded lattice $L$ and type $\tau$, there is a contravariant functor 
\vspace{-1ex}

\[\Conv(L,\,\cdot\,):\Rel_\tau\to\Alg_\tau\]
\vspace{-1ex}

\noindent that takes a relational structure $\mathfrak{X}$ to $L^\mathfrak{X}$, and takes a $p$-morphism $p:\mathfrak{X}\to\mathfrak{Y}$ to the homomorphism $p^L:L^\mathfrak{Y}\to L^\mathfrak{X}$ given by $p^L(\beta)=\beta\circ p$. 
\end{prop}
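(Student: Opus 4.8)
The plan is to check, in order: that $p^L$ is a well-defined map $L^{\mathfrak Y}\to L^{\mathfrak X}$; that the assignment $p\mapsto p^L$ is functorial in the contravariant sense; and --- the only substantive point --- that each $p^L$ is a morphism of $\Alg_\tau$, meaning it preserves the bounds, binary meet and join, and every operation $f_i$. Well-definedness is immediate: if $\beta\colon Y\to L$ then $\beta\circ p\colon X\to L$, so $p^L(\beta)\in L^{\mathfrak X}$. Functoriality is equally quick: for $p$-morphisms $p\colon\mathfrak X\to\mathfrak Y$ and $q\colon\mathfrak Y\to\mathfrak Z$ one has $(q\circ p)^L(\gamma)=\gamma\circ(q\circ p)=(\gamma\circ q)\circ p=p^L\bigl(q^L(\gamma)\bigr)$, so $(q\circ p)^L=p^L\circ q^L$, and $(\mathrm{id}_X)^L$ is visibly the identity on $L^{\mathfrak X}$. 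Note that neither of these first two steps uses that $p$ is a $p$-morphism; that hypothesis is needed only for compatibility with the $f_i$. One should also recall that $L^{\mathfrak X}$ and $L^{\mathfrak Y}$ are already objects of $\Alg_\tau$ by the definition of the convolution algebra, so nothing is required there.

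For the morphism property, the bounded-lattice part is routine bookkeeping of the kind already carried out in Proposition~\ref{conv}: the bounds $0,1$ and the binary operations $\wedge,\vee$ are all computed componentwise in $L^{\mathfrak X}$ and in $L^{\mathfrak Y}$, so for instance $p^L(\alpha\wedge\beta)(x)=(\alpha\wedge\beta)(p(x))=\alpha(p(x))\wedge\beta(p(x))=\bigl(p^L(\alpha)\wedge p^L(\beta)\bigr)(x)$, and similarly for $\vee$, $0$, and $1$.

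The step that actually invokes Definition~\ref{p-morphism} is compatibility with the operations $f_i$. I would fix $i\in I$, write $f_i$ for the operation of $L^{\mathfrak X}$ and $h_i$ for that of $L^{\mathfrak Y}$, take $\beta_1,\dots,\beta_{n_i}\in L^{\mathfrak Y}$ and $x\in X$, and unwind both sides at $x$. On one side, $p^L\bigl(h_i(\beta_1,\dots,\beta_{n_i})\bigr)(x)=h_i(\beta_1,\dots,\beta_{n_i})(p(x))$ is the join of $\beta_1(y_1)\wedge\cdots\wedge\beta_{n_i}(y_{n_i})$ over the tuples $(y_1,\dots,y_{n_i})$ with $(y_1,\dots,y_{n_i},p(x))\in S_i$. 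On the other side, $f_i\bigl(p^L(\beta_1),\dots,p^L(\beta_{n_i})\bigr)(x)$ is the join of $\beta_1(p(x_1))\wedge\cdots\wedge\beta_{n_i}(p(x_{n_i}))$ over the tuples $(x_1,\dots,x_{n_i})$ with $(x_1,\dots,x_{n_i},x)\in R_i$. By the $p$-morphism equation at $x$, the set of image tuples $(p(x_1),\dots,p(x_{n_i}))$ produced by the second index set coincides with the set of tuples $(y_1,\dots,y_{n_i})$ appearing in the first, so the two joins are over literally the same subset of $L$ and hence are equal. This also covers the nullary case $n_i=0$ under the usual conventions (empty meet $=1$, empty join $=0$): there Definition~\ref{p-morphism} says $p(x)\in S_i\iff x\in R_i$, which is exactly what is needed for $p^L$ to respect the constant $f_i$. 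Finally, contravariance of $\Conv(L,\,\cdot\,)$ is just the composition-reversal recorded above.

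The only place that demands care is this last equality of joins: the two index sets look different (tuples in $Y$ versus tuples in $X$) and the reindexing map $(x_1,\dots,x_{n_i})\mapsto(p(x_1),\dots,p(x_{n_i}))$ need not be injective, so the argument must be made at the level of the \emph{sets} of join operands --- repeated operands do not change a join --- rather than termwise. Once it is phrased this way, the identity is immediate from Definition~\ref{p-morphism}, and the proposition follows.
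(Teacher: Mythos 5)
Your proposal is correct and follows essentially the same route as the paper's proof: componentwise verification for the bounded-lattice operations, followed by the key computation that the $p$-morphism condition makes the two joins range over the same set of operands. Your explicit remark that the reindexing must be handled at the level of sets (since $(x_1,\dots,x_{n_i})\mapsto(p(x_1),\dots,p(x_{n_i}))$ need not be injective) is a point the paper passes over silently, but the argument is the same.
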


\begin{proof}
Clearly this assignment on objects and morphisms is well defined, produces an object of $\Alg_\tau$, and contravariantly preserves composition. It remains to show that for $p:\mathfrak{X}\to\mathfrak{Y}$ a $p$-morphism, that $p^L$ is a morphism in $\Alg_\tau$. 

Let $(\beta_j)_J$ be a family in $L^Y$. Then for $x\in X$, making use of the fact that meets in $L^X$ and $L^Y$ are componentwise and the definition of $p^L$, we have 

\[ (\bigwedge_J p^L(\beta_j))(x)\,\,=\,\, \bigwedge_J\beta_j(p(x))\,\,=\,\, (\bigwedge_J\beta_j)(p(x))\,\,=\,\,(p^L(\bigwedge_J\beta_j))(x)\]
\vspace{-1ex}

\noindent So $p^L$ preserves arbitrary meets, and similarly preserves arbitrary joins. That it preserves the bounds follows since $(p^L(0_{L^Y}))(x)=0_{L^Y}(p(x))=0_L$ for all $x\in X$, so $p^L(0_{L^Y})$ is the zero of $L^X$, with a similar argument showing that  $p^L(1_{L^Y})=1_{L^X}$. 

Suppose that $i\in I$ and that $f_i$ is the $n_i$-ary operation of $L^\mathfrak{X}$ corresponding to the $n_i+1$-ary relation $R_i$ of $\mathfrak{X}$, and that $g_i$ is the $n_i$-ary operation of $L^\mathfrak{Y}$ corresponding to the $n_i+1$-ary relation $S_i$ of $\mathfrak{Y}$. Let $\beta_1,\ldots,\beta_{n_i}\in L^Y$ and $x\in X$. The definition of a $p$-morphism in Definition~\ref{p-morphism} gives 
\vspace{-1ex}

\begin{align*}
g_i(\beta_1,\ldots,\beta_{n_i})(p(x)) 
&\,\,=\,\,\bigvee\{\beta_1(y_1)\wedge\cdots\wedge\beta_{n_i}(y_{n_i}):(y_1,\ldots,y_{n_i},p(x))\in S_i\} \\ 
&\,\,=\,\,\bigvee\{\beta_1(p(x_1))\wedge\cdots\wedge\beta_{n_i}(p(x_{n_i})):(x_1,\ldots,x_{n_i},x)\in R_i\}\\
&\,\,=\,\, f_i(p^L(\beta_1),\ldots,p^L(\beta_{n_i}))(x)
\end{align*}

\vspace{2ex}

\noindent Since this is true for each $x\in X$, we have $p^L(g_i(\beta_1,\ldots,\beta_{n_i}))=f_i(p^L(\beta_1),\ldots,p^L(\beta_{n_i}))$.
\end{proof}

\begin{rmk}{\em 
Adaptations to the functor $\Conv(\,\cdot\,,\mathfrak{X})$ were outlined in Remark~\ref{hg} depending on properties of the morphisms $\phi:L\to M$ between complete lattices chosen. Essentially, properties of $\phi$ are lifted to properties of $\phi^\mathfrak{X}$. That is not the case with the contravariant functor $\Conv(L,\,\cdot\,)$. For a $p$-morphism $p:\mathfrak{X}\to\mathfrak{Y}$, the lattice homomorphism $p^L$ is as well behaved as one could hope, preserving all joins and meets and the bounds. 
}
\end{rmk}

\begin{rmk}{\em 
Further properties of $\Conv(\,\cdot\,,\mathfrak{X})$ are given in Propositions~\ref{prod} and \ref{reflects}. It preserves products and preserves and reflects one-one and onto maps. The category $\Rel_\tau$ has coproducts given in an obvious way by union. It is easily seen that $\Conv(L,\,\cdot\,)$ takes coproducts to products, meaning 
\vspace{-1ex}

\[ L^{\bigoplus_J \mathfrak{X}_j} \,\,\simeq\,\,  \prod_JL^{\mathfrak{X}_j} \]
\vspace{0ex}

\noindent Additionally, it is easily seen that $\Conv(L,\,\cdot\,)$ takes one-one $p$-morphisms $p:\mathfrak{X}\to\mathfrak{Y}$ to onto homomorphisms $p^L:L^\mathfrak{Y}\to L^\mathfrak{X}$, and it takes onto $p$-morphisms to one-one homomorphisms. 
}
\end{rmk}

Results of this section are summarized in the following. 

\begin{thm}
\label{main}
There is a bifunctor $\Conv(\,\cdot\,,\,\cdot\,):\Lat\times\Rel_\tau\to\Alg_\tau$ that is covariant in the first argument and contravariant in the second. This functor preserves products in the first argument, and takes coproducts to products in the second. This functor preserves and reflects one-one and onto maps in the first argument. In the second argument it takes one-one maps to onto maps, and onto maps to one-one maps. 
\end{thm}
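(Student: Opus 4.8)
The plan is to read Theorem~\ref{main} as a summary that glues together the results already established in this section; the only genuinely new content is the mutual compatibility of the two one-variable functors $\Conv(\,\cdot\,,\mathfrak{X})$ and $\Conv(L,\,\cdot\,)$. First I would define $\Conv$ on objects by $(L,\mathfrak{X})\mapsto L^\mathfrak{X}$, and on a pair of morphisms $\phi:L\to M$ in $\Lat$ and $p:\mathfrak{X}\to\mathfrak{Y}$ in $\Rel_\tau$ by $\Conv(\phi,p)=\phi^\mathfrak{X}\circ p^L:L^\mathfrak{Y}\to M^\mathfrak{X}$, where $\phi^\mathfrak{X}:L^\mathfrak{X}\to M^\mathfrak{X}$ is supplied by the functor $\Conv(\,\cdot\,,\mathfrak{X})$ of Proposition~\ref{conv} and $p^L:L^\mathfrak{Y}\to L^\mathfrak{X}$ by the contravariant functor $\Conv(L,\,\cdot\,)$ proved above. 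This assignment is manifestly covariant in its first argument and contravariant in its second.

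Next I would check that $\Conv$ is genuinely a bifunctor. The key point is the interchange identity $\phi^\mathfrak{X}\circ p^L=p^M\circ\phi^\mathfrak{Y}$ between maps $L^\mathfrak{Y}\to M^\mathfrak{X}$: evaluating at $\beta\in L^Y$, the left side produces $\phi\circ(\beta\circ p)$ and the right side $(\phi\circ\beta)\circ p$, which coincide by associativity of composition of functions. Granting this, preservation of identities is immediate from the formulas for $\phi^\mathfrak{X}$ and $p^L$, and preservation of composites follows by combining the functoriality of $\Conv(\,\cdot\,,\mathfrak{X})$ and of $\Conv(L,\,\cdot\,)$ with the interchange identity, which lets one slide the lattice morphisms past the $p$-morphisms. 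I do not anticipate any obstacle here: this is the standard verification that two commuting families of morphisms assemble into a bifunctor, and the one-line associativity computation is the whole of it.

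Finally, the remaining clauses are restatements of earlier results, and I would dispatch them by citation: preservation of products in the first argument is Proposition~\ref{prod}; preservation and reflection of one-one and onto morphisms in the first argument is Proposition~\ref{reflects}; and the second-argument statements --- carrying coproducts to products via $L^{\bigoplus_J\mathfrak{X}_j}\simeq\prod_J L^{\mathfrak{X}_j}$, carrying one-one $p$-morphisms to onto homomorphisms, and onto $p$-morphisms to one-one homomorphisms --- are recorded in the remarks immediately preceding the theorem. If one wanted the proof self-contained rather than a pointer, the last two would be spelled out in a line each: for a one-one $p:\mathfrak{X}\to\mathfrak{Y}$, every $\beta\in L^X$ is hit by the $\gamma\in L^Y$ that agrees with $\beta\circ p^{-1}$ on $p(X)$ and is $0$ elsewhere, so $p^L$ is onto; and for an onto $p$, $\beta\circ p=\beta'\circ p$ forces $\beta=\beta'$, so $p^L$ is one-one.
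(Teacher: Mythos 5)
Your proposal is correct and matches the paper's intent: the paper offers no separate proof of Theorem~\ref{main}, presenting it explicitly as a summary of Propositions~\ref{conv}, \ref{prod}, \ref{reflects}, the contravariant-functor proposition, and the surrounding remarks, which is precisely how you assemble it. The one piece you add beyond the paper --- the explicit interchange identity $\phi^\mathfrak{X}\circ p^L=p^M\circ\phi^\mathfrak{Y}$ via associativity of composition --- is exactly the (easy) verification the paper leaves implicit, and your one-line arguments for the injective/surjective clauses are also sound.
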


\section{Preservation of equations}

As mentioned in the introduction, there is a body of work known as correspondence theory that relates first order properties of a relational structure $\mathfrak{X}$ to equational properties of the complex algebra $\mathfrak{X}^+$. Our aim in this section is to relate the equations that are valid in a convolution algebra $L^\mathfrak{X}$ to the equations that are valid in the complex algebra $\mathfrak{X}^+$. There is a basic limitation from the outset. Correspondence theory uses the full Boolean algebra signature of the complex algebra $\mathfrak{X}^+$ as well as the additional operations of type $\tau$ from relations of $\mathfrak{X}$. In general, the lattice $L$ used to form the convolution algebra will not even have a negation, so there will be no negation inherited by the convolution algebra $L^\mathfrak{X}$. We thus restrict attention to the negation-free fragment of the language, that is, the portion formed using the binary lattice operations $\wedge,\vee$, the bounds $0,1$, and the additional operations $(f_i)_I$ for the type $\tau$. 

\begin{prop}
\label{pi}
If $L$ is a non-trivial, complete, bounded lattice, then for a relational structure $\mathfrak{X}$, the complex algebra $\mathfrak{X}^+$ is isomorphic to a subalgebra of the convolution algebra $L^\mathfrak{X}$. So any equation in the negation-free language that is valid in $L^\mathfrak{X}$ is also valid in $\mathfrak{X}^+$. 
\end{prop}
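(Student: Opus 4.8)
The plan is to realize $\mathfrak{X}^+$ inside $L^\mathfrak{X}$ by embedding the two-element lattice $2$ into $L$ and then invoking functoriality. Since $L$ is non-trivial we have $0_L\neq 1_L$, so the map $\iota\colon 2\to L$ with $\iota(0)=0_L$ and $\iota(1)=1_L$ is a well-defined injection. First I would check that $\iota$ is a morphism of $\Lat$: it sends the bounds to the bounds and preserves binary meets, and it preserves arbitrary joins as well --- trivially, since every join in the finite lattice $2$ is a finite join, and a join of copies of $0_L$ and $1_L$ in $L$ equals $1_L$ precisely when one of the joinands is $1_L$.

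Next I would apply the functor $\Conv(\,\cdot\,,\mathfrak{X})$ of Proposition~\ref{conv} to $\iota$. This produces a homomorphism $\iota^\mathfrak{X}\colon 2^\mathfrak{X}\to L^\mathfrak{X}$ in $\Alg_\tau$; as shown in the proof of that proposition, $\iota^\mathfrak{X}$ preserves the bounds, binary meets and joins, and each operation $f_i$, that is, every operation of the negation-free signature. Moreover $\iota^\mathfrak{X}$ is injective: since $\iota^\mathfrak{X}(\alpha)=\iota\circ\alpha$ and $\iota$ is one-one, equal composites force equal functions (alternatively, apply Proposition~\ref{reflects} when $X\neq\emptyset$, the case $X=\emptyset$ being trivial as both algebras then have a single element). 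Hence the image of $\iota^\mathfrak{X}$ is a subalgebra of $L^\mathfrak{X}$ isomorphic to $2^\mathfrak{X}$.

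Finally I would use Proposition~\ref{curr}, which identifies the convolution algebra $2^\mathfrak{X}$ with the complex algebra $\mathfrak{X}^+$, in particular with the reduct of $\mathfrak{X}^+$ to the negation-free signature $\wedge,\vee,0,1,(f_i)_I$. Composing this isomorphism with the embedding $\iota^\mathfrak{X}$ exhibits $\mathfrak{X}^+$ as (isomorphic to) a subalgebra of $L^\mathfrak{X}$. The concluding sentence of the proposition is then immediate from general universal algebra: equations in a fixed signature are inherited by subalgebras and preserved by isomorphism, so any negation-free equation valid in $L^\mathfrak{X}$ is valid in $\mathfrak{X}^+$.

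I do not expect a genuine obstacle here; the only steps deserving a line of care are the verification that $\iota$ preserves arbitrary (not merely finite) joins, so that it really is a $\Lat$-morphism and Proposition~\ref{conv} applies, and the degenerate case $X=\emptyset$, both dealt with above.
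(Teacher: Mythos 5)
Your proposal is correct and follows essentially the same route as the paper: embed $2$ into $L$ as a $\Lat$-morphism, apply the functoriality of $\Conv(\,\cdot\,,\mathfrak{X})$ to obtain an embedding $2^\mathfrak{X}\hookrightarrow L^\mathfrak{X}$, and identify $2^\mathfrak{X}$ with $\mathfrak{X}^+$ via Proposition~\ref{curr}. Your extra care about arbitrary joins being finite in $2$ and about the case $X=\emptyset$ is sound but not a departure from the paper's argument.
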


\begin{proof}
If $L$ is non-trivial, then there is an embedding $\phi$ of the 2-element lattice $2$ into $L$ that preserves bounds and finite, hence arbitrary, meets and joins. By Theorem~\ref{main}, $\phi^\mathfrak{X}$ is an embedding of $2^\mathfrak{X}$ into $L^\mathfrak{X}$, and by Proposition~\ref{curr} the complex algebra $\mathfrak{X}^+$ is isomorphic to $2^\mathfrak{X}$. 
\end{proof}

While the convolution algebra $L^\mathfrak{X}$ is defined for any complete algebra $L$ and any relational structure $\mathfrak{X}$, it is for complete lattices $L$ that are reducts of complete Heyting algebras where it enjoys its best properties. We further specialize matters temporarily. 

\begin{defn}
A spatial lattice $L$ is a bounded lattice that is isomorphic to the lattice of open sets of a topological space. 
\end{defn}

Obviously any spatial lattice is complete and distributive. Moreover, it satisfies $x\wedge\bigvee_Jy_j=\bigvee_Jx\wedge y_j$ since finite meets of open sets are given by intersections and arbitrary joins of open sets are given by unions. Complete lattices satisfying this infinite distributive law are known as frames. It is not the case that every frame is isomorphic to the open sets of a topological space. The ones that are are called spatial frames. We introduce the term spatial lattice to avoid conflict with the use of frame as a relational structure $\mathfrak{X}$. For further details, see \cite{Pultr}. 

\begin{prop}
\label{spatial}
For $L$ a non-trivial spatial lattice and $\mathfrak{X}$ a relational structure, $L^\mathfrak{X}$ and $\mathfrak{X}^+$ satisfy the same equations in the negation-free language. 
\end{prop}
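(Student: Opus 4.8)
The plan is to prove both inclusions between the equational theories of $L^\mathfrak{X}$ and $\mathfrak{X}^+$, using the machinery already in place. One direction is immediate: by Proposition~\ref{pi}, since $L$ is non-trivial, complete and bounded, the complex algebra $\mathfrak{X}^+$ embeds in $L^\mathfrak{X}$, so every negation-free equation valid in $L^\mathfrak{X}$ is valid in $\mathfrak{X}^+$. The substance of the proposition is the converse: every negation-free equation valid in $\mathfrak{X}^+$ (equivalently, in $2^\mathfrak{X}$, via Proposition~\ref{curr}) is valid in $L^\mathfrak{X}$. The key observation is that a spatial lattice $L$ is, by definition, (isomorphic to) the lattice $\mathcal{O}(T)$ of open sets of some topological space $T$, and $\mathcal{O}(T)$ is a sublattice of the power set $\mathcal{P}(T) = 2^T$ that is closed under finite meets and arbitrary joins (finite intersections and arbitrary unions of opens are open). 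So the inclusion $\iota: \mathcal{O}(T) \hookrightarrow 2^T$ is a morphism in $\Lat$.

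The strategy is then to factor the problem through the product lattice $2^T$. First I would note that $2^T \cong \prod_{t \in T} 2$ as complete lattices, and hence by Proposition~\ref{prod} (applied with each $L_j = 2$), $(2^T)^\mathfrak{X} \cong \prod_{t\in T} 2^\mathfrak{X}$ as algebras of type $\tau$ (including the lattice operations). A product of copies of $2^\mathfrak{X}$ satisfies every equation that $2^\mathfrak{X}$ does, so $(2^T)^\mathfrak{X}$ satisfies every negation-free equation valid in $2^\mathfrak{X}$, hence every one valid in $\mathfrak{X}^+$. Next, by Theorem~\ref{main} the functor $\Conv(\,\cdot\,,\mathfrak{X})$ is covariant in its first argument and preserves and reflects one-one maps, so the $\Lat$-embedding $\iota: L \cong \mathcal{O}(T) \hookrightarrow 2^T$ yields an embedding $\iota^\mathfrak{X}: L^\mathfrak{X} \hookrightarrow (2^T)^\mathfrak{X}$ of algebras of type $\tau$ that also preserves the lattice operations and bounds (as recorded in the remark after Proposition~\ref{conv}). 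A subalgebra inherits all equations, so $L^\mathfrak{X}$ satisfies every negation-free equation valid in $(2^T)^\mathfrak{X}$, and therefore every negation-free equation valid in $\mathfrak{X}^+$. Combining with the first direction gives equality of the negation-free equational theories.

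The only point requiring care — and the step I expect to be the main obstacle in writing it cleanly — is verifying that the abstract characterization "spatial lattice" really does give a concrete embedding into a power-set lattice that is a $\Lat$-morphism, i.e. that one may take $T$ to be the underlying space and $\mathcal{O}(T) \hookrightarrow \mathcal{P}(T)$ as the needed map, checking it preserves $0,1$, finite meets and arbitrary joins. This is genuinely routine from the definitions of open set and of the lattice $\mathcal{O}(T)$, but it is the place where the topological hypothesis is actually used, and it should be stated explicitly. Everything else is a bookkeeping assembly of Propositions~\ref{curr}, \ref{prod}, \ref{pi} and Theorem~\ref{main}. One could also phrase the argument more slickly by saying that $L$, being spatial, is a subalgebra (in $\Lat$) of a power of $2$, and that both $\Conv(\,\cdot\,,\mathfrak{X})$-image formation and passage to the negation-free equational theory are monotone under "subalgebra of a power," but the factored version above makes each invocation of an earlier result transparent.
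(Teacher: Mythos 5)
Your proposal is correct and follows essentially the same route as the paper: both directions are handled identically, with the converse obtained by embedding $L\cong\mathcal{O}(T)$ into $\mathcal{P}(T)\cong 2^T$ as a $\Lat$-morphism, applying the functoriality and product-preservation of $\Conv(\,\cdot\,,\mathfrak{X})$ to realize $L^\mathfrak{X}$ as a subalgebra of a power of $2^\mathfrak{X}$, and invoking Proposition~\ref{pi} for the other inclusion. The paper's proof is exactly this argument, so no further comment is needed.
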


\begin{proof}
Suppose that $L$ is the lattice of open sets of a topological space $(Z,\mu)$. Then, from the definition of a topological space, $L$ is a bounded sublattice of the power set $\mathcal{P}(Z)$ that is closed under arbitrary joins. By Theorem~\ref{main}, $L^\mathfrak{X}$ is isomorphic to a subalgebra of $\mathcal{P}(Z)^\mathfrak{X}$. Since the lattice $\mathcal{P}(Z)$ is isomorphic to $2^Z$, we have $L^\mathfrak{X}$ is isomorphic to a subalgebra of $(2^Z)^\mathfrak{X}$. Theorem~\ref{main} gives that $\Conv(\,\cdot\,,\mathfrak{X})$ preserves products, so $(2^Z)^\mathfrak{X}$ is isomorphic to $(2^\mathfrak{X})^Z$. So $L^\mathfrak{X}$ is isomorphic to a sublagebra of $(2^\mathfrak{X})^Z$, hence satisfies all equations in the negation-free signature that are satisfied by $2^\mathfrak{X}$. Proposition~\ref{pi} shows that all equations satisfied by $L^\mathfrak{X}$ are satisfied by $2^\mathfrak{X}$. 
\end{proof}

It is well known \cite{Pultr} that every finite distributive lattice is isomorphic to the lattice of open sets of a topological space, namely the topology of downsets of its poset of join irreducibles. This provides the following. 

\begin{cor}
\label{finite}
For $L$ a non-trivial, finite, distributive lattice, and $\mathfrak{X}$ a relational structure, $L^\mathfrak{X}$ and $\mathfrak{X}^+$ satisfy the same equations in the negation-free language. 
\end{cor}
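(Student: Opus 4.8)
The plan is to derive Corollary~\ref{finite} directly from Proposition~\ref{spatial} by verifying that every finite distributive lattice is a spatial lattice, i.e.\ is isomorphic to the lattice of open sets of some topological space. Since Proposition~\ref{spatial} already does all the algebraic work, the only thing to supply is this structural fact about finite distributive lattices, which is classical (it is the finite case of Birkhoff's representation theorem phrased topologically).

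First I would recall that, by Birkhoff's representation theorem, a finite distributive lattice $L$ is isomorphic to the lattice of downsets (equivalently, upsets, depending on the convention chosen) of its poset $J(L)$ of nonzero join-irreducible elements, with the isomorphism sending $a \in L$ to $\{p \in J(L) : p \le a\}$ and its inverse sending a downset $D$ to $\bigvee D$. Next I would observe that the collection of all downsets of any poset $(P,\le)$ is closed under arbitrary unions and arbitrary intersections and contains $\emptyset$ and $P$, hence forms a topology on $P$ — the Alexandrov topology of the order. Therefore $L$ is isomorphic to the lattice of open sets of the topological space $(J(L),\text{downset topology})$, so $L$ is a spatial lattice.

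Finally I would invoke Proposition~\ref{spatial}: since $L$ is non-trivial (it has at least two elements) and spatial, $L^\mathfrak{X}$ and $\mathfrak{X}^+$ satisfy the same equations in the negation-free language, which is exactly the claim. I would also note that non-triviality of $L$ is exactly what guarantees $J(L) \ne \emptyset$ so that the hypotheses of Proposition~\ref{spatial} are met.

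I do not expect any real obstacle here; the statement is essentially a specialization of Proposition~\ref{spatial}, and the only substantive input — that finite distributive lattices arise as open-set lattices — is standard and is already cited in the excerpt (\cite{Pultr}). If one wished to be fully self-contained one could spell out the verification that downsets of a poset form a topology, but even that is a one-line check, so the mild ``difficulty'' is merely bookkeeping: making sure the convention (downsets versus upsets, join-irreducibles versus meet-irreducibles) is stated consistently so that the order isomorphism with the open-set lattice is literally correct.
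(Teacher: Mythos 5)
Your proposal is correct and follows exactly the paper's route: the paper likewise observes (citing \cite{Pultr}) that every finite distributive lattice is isomorphic to the lattice of open sets of the downset topology on its poset of join irreducibles, and then derives the corollary immediately from Proposition~\ref{spatial}. Your additional verification that downsets form an Alexandrov topology is just the expansion of this standard fact.
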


We turn our attention to generalizing Proposition~\ref{spatial} to general frames, that is, lattices that are reducts of complete Heyting algebras. 

\begin{defn}
For a set $A$ and natural numbers $1\leq k\leq n$, the $k^{th}$ projection on $A$ is the map $\pi_k^n:A^n\to A$ defined by setting  $\pi_k^n(a_1,\ldots,a_n)=a_k$. For natural numbers $k,n$ and maps $g:A^n\to A$ and $f_1,\ldots,f_n:A^k\to A$, the generalized composite $g[f_1,\ldots,f_n]:A^k\to A$ is defined by setting 
\vspace{-2ex}

\[ g[f_1,\ldots,f_n](a_1,\ldots,a_k)\,\,=\,\, g(f_1(a_1,\ldots,a_k),\ldots,f_n(a_1,\ldots,a_k))\]
\end{defn}
\vspace{1ex}

We frequently write an element $(a_1,\ldots,a_k)\in A^k$ as $\vec{a}$. With this notation, we write the $k^{th}$ projection as $\pi_k^n(\vec{a})=a_k$ and the generalized composite as $g[f_1,\ldots,f_n](\vec{a})=g(f_a(\vec{a}),\ldots,f_n(\vec{a}))$. 

\begin{defn}
A clone $K$ on a set $A$ is a subset of $\{f\, |\, f:A^n\to A \mbox{ for some }n\in\mathbb{N}\}$ such that for each $1\leq k\leq n$ the $k^{th}$ projection $\pi_k^n$ belongs to $K$ and $K$ is closed under generalized composition. For any set of operations on $A$, there is a smallest clone on $A$ containing it. This is called the clone generated by the operations.
\end{defn}

Recall that a subset $D$ of a partially ordered set $P$ is directed if for each non-empty subset $S\subseteq D$ there is an element of $D$ that is an upper bound of this set. The following can be expressed more generally, but this is sufficient for our purposes. 

\begin{defn}
\label{nm}
A function $f:L\to M$ from a complete lattice $L$ to a complete lattice $M$ preserves directed joins if for each directed  subset $(x_j)_J$ in $L$ we have $f(\bigvee_J x_j)=\bigvee_J f(x_j)$. 
\end{defn}

A function that preserves directed joins is easily seen to be order preserving. So the joins on both sides of the equation in Definition~\ref{nm} are directed joins. We will apply this notion to operations $f:L^n\to L$ on a lattice $L$. Here we consider $L^n$ as a lattice in its own right and consider directed families $(\vec{a}_j)_J$ of elements in $L^n$. To avoid a conflict with subscripts, we write the $k^{th}$ component of $\vec{a}_j$ as $a_j^k$. Thus $\vec{a}_j=(a_j^1,\ldots,a_j^n)$. 

\begin{prop}
\label{opo}
If $L$ is a complete lattice and $f:L^n\to L$ preserves joins in each coordinate, then $f$ preserves directed joins. 
\end{prop}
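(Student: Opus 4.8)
The plan is to prove that coordinatewise join-preservation implies preservation of directed joins by a straightforward induction on the arity $n$, leveraging the fact that a directed join in the product lattice $L^n$ can be approximated coordinatewise by elements whose coordinates come from the directed set.

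First I would set up the notation: let $(\vec a_j)_{j\in J}$ be a directed family in $L^n$ with $\vec a_j = (a_j^1,\ldots,a_j^n)$, and write $\vec b = \bigvee_J \vec a_j$, so that $\vec b = (b^1,\ldots,b^n)$ with $b^k = \bigvee_J a_j^k$ for each $k$ (joins in a product lattice are computed coordinatewise). Since $f$ preserves joins in each coordinate separately, I can peel the coordinates off one at a time:
\[ f(b^1,\ldots,b^n) \,=\, f\Big(\bigvee_J a_j^1, b^2,\ldots,b^n\Big) \,=\, \bigvee_J f(a_j^1, b^2,\ldots,b^n), \]
and then repeat in the second coordinate inside each term, and so on, obtaining after $n$ steps
\[ f(\vec b) \,=\, \bigvee_{j_1\in J}\cdots\bigvee_{j_n\in J} f(a_{j_1}^1, a_{j_2}^2,\ldots,a_{j_n}^n). \]
So $f(\vec b)$ is the (generally non-directed) join over all $n$-tuples $(j_1,\ldots,j_n)\in J^n$ of the quantities $f(a_{j_1}^1,\ldots,a_{j_n}^n)$. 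On the other hand $\bigvee_J f(\vec a_j) = \bigvee_{j\in J} f(a_j^1,\ldots,a_j^n)$ is the join over the "diagonal" tuples $(j,\ldots,j)$, and since $f$ is order preserving (a consequence of coordinatewise join-preservation), each diagonal term is below $f(\vec b)$, giving the inequality $\bigvee_J f(\vec a_j) \le f(\vec b)$ for free.

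For the reverse inequality I use directedness: given any tuple $(j_1,\ldots,j_n)\in J^n$, directedness of $(\vec a_j)_J$ (applied to the finite subset $\{\vec a_{j_1},\ldots,\vec a_{j_n}\}$) yields some $j^*\in J$ with $\vec a_{j_\ell} \le \vec a_{j^*}$ for all $\ell$, hence $a_{j_\ell}^\ell \le a_{j^*}^\ell$ for each $\ell$, and therefore $f(a_{j_1}^1,\ldots,a_{j_n}^n) \le f(a_{j^*}^1,\ldots,a_{j^*}^n) = f(\vec a_{j^*}) \le \bigvee_J f(\vec a_j)$ by monotonicity of $f$. Taking the join over all tuples $(j_1,\ldots,j_n)$ gives $f(\vec b) \le \bigvee_J f(\vec a_j)$, completing the proof. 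One should also dispatch the edge cases: when $J=\emptyset$ there is nothing to check since directed sets are taken to be nonempty here (or, if the empty set is allowed, both sides are $f$ of the bottom tuple versus the empty join, which needs the nullary compatibility — but under the stated convention this does not arise), and when $n=0$ the statement is vacuous.

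I do not expect a serious obstacle here; the only point requiring a little care is the bookkeeping in peeling off coordinates one at a time — one must note that at each stage the "fixed" coordinates are themselves joins, and that join-preservation in a single coordinate is being applied with the other coordinates held at arbitrary (in particular, join-valued) elements, which is exactly what Definition~\ref{defnoperator}-style coordinatewise additivity provides. The genuinely load-bearing use of the directedness hypothesis is the single step above where a finite collection of indices is absorbed into one upper bound; without it, $f(\vec b)$ would only be the join over the product $J^n$ rather than over the diagonal, and the two need not agree.
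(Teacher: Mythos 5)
Your proof is correct and follows essentially the same route as the paper's: expand $f(\bigvee_J\vec a_j)$ via coordinatewise additivity into the join over all cross-index tuples in $J^n$, observe that the diagonal join $\bigvee_J f(\vec a_j)$ is a join over a subset of those terms, and use directedness to place each cross term beneath a diagonal term. The extra attention to edge cases and the explicit monotonicity remark are fine but not needed beyond what the paper records.
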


\begin{proof}
For a directed family $(\vec{a}_j)_J$ in $L^n$ we have $\bigvee_J\vec{a}_j=(\bigvee_Ja_j^1,\ldots,\bigvee_Ja_j^n)$. Repeatedly applying that $f$ is additive in each coordinate we have 
\vspace{-1ex}

\[ f(\bigvee_J\vec{a}_j) \,\,=\,\, \bigvee_{j_1\in J}\cdots\bigvee_{j_n\in J}f(a_{j_1}^1,\ldots,a_{j_n}^n) \]
\vspace{0ex}

\noindent Note that a different index must be used for each component to allow cross terms. This expression is greater than or equal to $\bigvee_Jf(\vec{a}_j) = \bigvee_Jf(a_j^1,\ldots,a_j^n)$ since there the first expression is a join of a larger set of terms. Using the directedness of $(\vec{a}_j)_J$, each term in the first expression lies beneath one in the second. So the expressions are equal, and $f$ preserves directed joins. 
\end{proof}

\begin{prop}
\label{runt}
If $L$ is a complete lattice and $S$ is a set of operations on $L$ that preserve directed joins, then each member of the clone generated by $S$ preserves directed joins. 
\end{prop}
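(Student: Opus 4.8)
The plan is to argue by structural induction on the way elements of the clone are built up. Recall that the clone $K$ generated by $S$ is the smallest set of finitary operations on $L$ that contains $S$, contains every projection $\pi_k^n$, and is closed under generalized composition; hence every member of $K$ arises from the members of $S$ and the projections by finitely many generalized compositions. So it suffices to check three things: that each projection preserves directed joins, that each member of $S$ does, and that the collection of operations on $L$ that preserve directed joins is closed under generalized composition. The middle item is exactly the hypothesis on $S$, so nothing further need be said about it.

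For the projections, I would fix $1\le k\le n$ and a directed family $(\vec{a}_j)_J$ in $L^n$. Since joins in $L^n$ are computed coordinatewise, $\bigvee_J\vec{a}_j=(\bigvee_Ja_j^1,\ldots,\bigvee_Ja_j^n)$, so that $\pi_k^n(\bigvee_J\vec{a}_j)=\bigvee_Ja_j^k=\bigvee_J\pi_k^n(\vec{a}_j)$; in fact a projection preserves arbitrary joins.

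The real content is the closure under generalized composition. Suppose $g:L^n\to L$ and $f_1,\ldots,f_n:L^k\to L$ all preserve directed joins, and let $(\vec{a}_j)_J$ be a directed family in $L^k$. As noted just after Definition~\ref{nm}, an operation that preserves directed joins is order preserving, so each $f_\ell$ is order preserving; consequently the family $\big(\,(f_1(\vec{a}_j),\ldots,f_n(\vec{a}_j))\,\big)_J$ is again directed in $L^n$, because if $\vec{a}_{j_0}$ is a member of the original family that dominates a given finite subfamily, then $(f_1(\vec{a}_{j_0}),\ldots,f_n(\vec{a}_{j_0}))$ is a member of the image family dominating the corresponding image subfamily. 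Applying the hypothesis to each $f_\ell$ gives $f_\ell(\bigvee_J\vec{a}_j)=\bigvee_Jf_\ell(\vec{a}_j)$, and since joins in $L^n$ are coordinatewise, the tuple $(\bigvee_Jf_1(\vec{a}_j),\ldots,\bigvee_Jf_n(\vec{a}_j))$ is precisely the directed join $\bigvee_J(f_1(\vec{a}_j),\ldots,f_n(\vec{a}_j))$ in $L^n$. Then the hypothesis on $g$ finishes it:
\[ g[f_1,\ldots,f_n]\Big(\bigvee_J\vec{a}_j\Big)=g\Big(\bigvee_J(f_1(\vec{a}_j),\ldots,f_n(\vec{a}_j))\Big)=\bigvee_Jg(f_1(\vec{a}_j),\ldots,f_n(\vec{a}_j))=\bigvee_Jg[f_1,\ldots,f_n](\vec{a}_j). \]

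I expect the only point needing care to be the assertion that the image of a directed family under a tuple of order-preserving maps is again directed — this is exactly where the fact that directed-join-preserving operations are order preserving gets used, and without it the final chain of equalities would not even make sense. Everything else is bookkeeping with coordinatewise joins, in the same spirit as the proof of Proposition~\ref{opo}.
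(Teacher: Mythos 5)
Your proof is correct and follows essentially the same route as the paper: projections preserve directed joins, and the class of directed-join-preserving operations is closed under generalized composition, with the key observation that the image family $(f_1(\vec{a}_j),\ldots,f_n(\vec{a}_j))_J$ is again directed because directed-join-preserving maps are order preserving. You spell out that last point more explicitly than the paper does, but the argument is the same.
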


\begin{proof}
It is easily seen that the projection maps $\pi_k^n$ preserve directed joins. For $n,k\in\mathbb{N}$ suppose that $g:L^n\to L$ and $f_1,\ldots,f_n:L^k\to L$ preserve directed joins. Then for a directed family $(\vec{a}_j)_J$ in $L^k$ we have 
\vspace{-1ex}

\begin{align*}
g(f_1(\bigvee_J(\vec{a}_j),\ldots,f_n(\bigvee_J\vec{a}_j)) & \,\, = \,\, g(\bigvee_Jf_1(\vec{a}_j),\ldots,\bigvee_Jf_n(\vec{a}_j))
\end{align*}
\vspace{0ex}

\noindent Since $(\bigvee_Jf_1(\vec{a}_j),\ldots,\bigvee_Jf_n(\vec{a}_j)) = \bigvee_J(f_1(\vec{a}_j),\ldots,f_n(\vec{a}_j))$ and this is a directed join, we have 

\[g[f_1,\ldots,f_n](\bigvee_J\vec{a}_j) \,\,=\,\, \bigvee_Jg[f_1,\ldots,f_n](\vec{a}_j)\]
\vspace{-1ex}

Since the clone generated by $S$ is the closure of the union of the set $S$ with the projections under generalized composition, each member of this clone preserves directed joins. 
\end{proof}

\begin{prop}
\label{loop}
Let $L$ be a complete lattice and $X$ be a set. If $S$ is a set of operations on $L^X$ that preserve directed joins and are finitely supported in the sense of Definition~\ref{gwot}, then each member of the clone generated by $S$ preserves directed joins and is finitely supported. 
\end{prop}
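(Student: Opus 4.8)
The plan is to handle the two conclusions separately. Since joins in $L^X$ are computed componentwise, $L^X$ is itself a complete lattice, so Proposition~\ref{runt} applies to it verbatim: every member of the clone generated by $S$ preserves directed joins, and hence is order preserving. Thus all that remains is to show that finite supportedness, in the sense of Definition~\ref{gwot}, is inherited by the whole clone. I would prove this by induction on the generation of the clone; the members of $S$ are finitely supported by hypothesis, so the work is in the projections and in closure under generalized composition.

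For the projections: given $\alpha_1,\dots,\alpha_n\in L^X$ we have $\pi_k^n(\alpha_1,\dots,\alpha_n)=\alpha_k$. For each $x\in X$ let $\delta^x$ be the function taking the value $\alpha_k(x)$ at $x$ and $0$ elsewhere; it is finitely supported with $\delta^x\le\alpha_k$, and $\bigvee_{x\in X}\delta^x=\alpha_k$. Placing $\delta^x$ in the $k$th slot and the (finitely supported) zero function in the remaining slots exhibits a finitely supported tuple $\le(\alpha_1,\dots,\alpha_n)$ on which $\pi_k^n$ takes the value $\delta^x$, so the join in Definition~\ref{gwot} is at least $\bigvee_x\delta^x=\alpha_k$; the reverse inequality is order preservation of $\pi_k^n$. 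Hence each projection is finitely supported.

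For the inductive step, suppose $g$ is $n$-ary and $f_1,\dots,f_n$ are $k$-ary members of the clone, so by the above all of them preserve directed joins and (by the induction hypothesis) are finitely supported, and set $h=g[f_1,\dots,f_n]$. Fix $\vec\alpha=(\alpha_1,\dots,\alpha_k)$ and let $D$ be the set of $k$-tuples $\vec\delta=(\delta_1,\dots,\delta_k)$ with each $\delta_i\le\alpha_i$ finitely supported. A componentwise finite join of members of $D$ is again in $D$, so $D$ is directed; since each $f_m$ is order preserving, the set $\{(f_1(\vec\delta),\dots,f_n(\vec\delta)):\vec\delta\in D\}$ is a directed subset of $(L^X)^n$, and because joins in $(L^X)^n$ are computed componentwise and each $f_m$ is finitely supported, its join is $(f_1(\vec\alpha),\dots,f_n(\vec\alpha))$. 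Since $g$ preserves directed joins, $h(\vec\alpha)=g(f_1(\vec\alpha),\dots,f_n(\vec\alpha))=\bigvee_{\vec\delta\in D}g(f_1(\vec\delta),\dots,f_n(\vec\delta))=\bigvee_{\vec\delta\in D}h(\vec\delta)$, which says precisely that $h$ is finitely supported. By induction, every member of the clone generated by $S$ is finitely supported, and by Proposition~\ref{runt} it also preserves directed joins.

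The step requiring the most care is the inductive step: one must observe that $D$ is genuinely directed (closed under componentwise finite joins), that the induced family in $(L^X)^n$ really has the claimed join, and — the crucial point — that it is here that directed-join preservation of $g$ (Proposition~\ref{runt}) is invoked, rather than mere additivity in a single coordinate. Everything else is routine bookkeeping.
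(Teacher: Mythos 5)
Your proof is correct and follows essentially the same route as the paper: apply Proposition~\ref{runt} for directed joins, then handle finite supportedness via the directed family of finitely supported tuples below $\vec\alpha$. The only cosmetic difference is in the inductive step, where the paper applies directed-join preservation to the composite $g[f_1,\ldots,f_n]$ itself, while you first use finite supportedness of the $f_m$ and then directed-join preservation of $g$ — an equivalent reorganization; your explicit verification that the projections are finitely supported fills in a step the paper dismisses as clear.
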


\begin{proof}
Proposition~\ref{runt} shows that every member of the clone generated by $S$ preserves directed joins, and clearly the projections are finitely supported. It remains to show that if $n,k\in\mathbb{N}$ and $g:(L^X)^n\to L^X$ and $f_1,\ldots,f_n:(L^X)^k\to L^X$ preserve directed joins and are finitely supported, then the generalized composite $g[f_1,\ldots,f_n]$ is finitely supported. Let $\alpha_1,\ldots,\alpha_n\in L^X$ and set $\vec{\alpha}=(\alpha_1,\ldots,\alpha_n)$. Then let $(\vec{\delta}_j)_J$ be the family of finitely supported elements of $L^X$ that lie beneath $\vec{\alpha}$. Note that $(\vec{\delta}_j)_J$ is a directed family, so  
\vspace{-1ex}

\begin{align*}
g[f_1,\ldots,f_n](\bigvee_J\vec{\delta}_j) & \,\, = \,\, \bigvee_J g[f_1,\ldots,f_n](\vec{\delta}_j)
\end{align*}
\vspace{0ex}

\noindent Since $\vec{\alpha}=\bigvee_J\vec{\delta}_j$ we have $g[f_1,\ldots,f_n](\vec{\alpha})=\bigvee_J g[f_1,\ldots,f_n](\vec{\delta}_j)$, and therefore the generalized composite is finitely supported. 
\end{proof}

\begin{thm}
\label{eqns}
Let $L$ be the lattice reduct of a complete Heyting algebra that has at least two elements. Then for any relational structure $\mathfrak{X}$, the algebras $L^\mathfrak{X}$ and $2^\mathfrak{X}$ satisfy the same equations involving the additional operations $(f_i)_I$ and the bounded lattice operations. 
\end{thm}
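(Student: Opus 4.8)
The plan is to exploit the structural tools assembled in this section so as to reduce the statement for an arbitrary complete Heyting algebra $L$ to the spatial case already settled in Proposition~\ref{spatial}. One direction is free: Proposition~\ref{pi} gives that every negation-free equation valid in $L^\mathfrak{X}$ is valid in $\mathfrak{X}^+\cong 2^\mathfrak{X}$. So the real content is the converse, that any equation $s\approx t$ in the signature of $\wedge,\vee,0,1,(f_i)_I$ which holds in $2^\mathfrak{X}$ also holds in $L^\mathfrak{X}$. Since equations in this signature are term equations and each side evaluates to an element of the clone generated by the basic operations, and all the basic operations of $L^\mathfrak{X}$ (the $f_i$, binary meet and join, the bounds) are complete operators by Proposition~\ref{operators} and finitely supported by Proposition~\ref{finitely supported}, Proposition~\ref{loop} tells us that the interpretation of each term $s,t$ on $L^\mathfrak{X}$ both preserves directed joins and is finitely supported.

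The second step is to use finite support to reduce the validity of $s\approx t$ on $L^\mathfrak{X}$ to its validity on finitely supported tuples. If $\alpha_1,\dots,\alpha_m\in L^X$ are arbitrary, then writing $\vec\alpha$ as the directed join of the family $(\vec\delta_j)_J$ of all finitely supported tuples below it, finite supportedness of the term operations gives $s^{L^\mathfrak{X}}(\vec\alpha)=\bigvee_J s^{L^\mathfrak{X}}(\vec\delta_j)$ and likewise for $t$. Hence it suffices to prove $s^{L^\mathfrak{X}}(\vec\delta)=t^{L^\mathfrak{X}}(\vec\delta)$ for every finitely supported tuple $\vec\delta$. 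The advantage of a finitely supported $\delta\in L^X$ is that it takes only finitely many values in $L$; collecting the finitely many values occurring across all of $\delta_1,\dots,\delta_m$ (together with $0$ and $1$), we get a finite subset $F\subseteq L$. Now the key move: let $L'$ be a finite sublattice of $L$ — or better, a finite distributive lattice admitting a bound-and-lattice-operation preserving embedding into $L$ — that contains $F$. Here we must be careful: a finite subset of a distributive lattice generates a finite distributive sublattice, and this sublattice is itself a finite distributive lattice, hence spatial.

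The third step is to descend the equation to $L'$. Each $\delta_k$ factors through $L'$, i.e. $\delta_k\in (L')^X\subseteq L^X$, and the inclusion $\iota:L'\hookrightarrow L$ preserves bounds, finite meets, and finite joins — but to invoke the functor $\Conv(\,\cdot\,,\mathfrak{X})$ we need $\iota$ to preserve \emph{arbitrary} joins, which on a finite lattice is automatic. By Proposition~\ref{conv}, $\iota^\mathfrak{X}:(L')^\mathfrak{X}\to L^\mathfrak{X}$ is a homomorphism of algebras of type $\tau$ that also preserves bounds and meets, hence commutes with the interpretation of every negation-free term. Therefore $s^{L^\mathfrak{X}}(\iota\circ\delta_1,\dots)=\iota^\mathfrak{X}(s^{(L')^\mathfrak{X}}(\delta_1,\dots))$ and similarly for $t$; so it is enough to know $s\approx t$ holds in $(L')^\mathfrak{X}$. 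But $L'$ is a non-trivial finite distributive lattice, so Corollary~\ref{finite} (or directly Proposition~\ref{spatial}) gives that $(L')^\mathfrak{X}$ and $\mathfrak{X}^+\cong 2^\mathfrak{X}$ satisfy the same negation-free equations, and $s\approx t$ holds in $2^\mathfrak{X}$ by hypothesis. This closes the loop.

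The main obstacle I anticipate is the bookkeeping around \emph{which} finite sublattice to use and making sure the reduction is legitimate: one needs the finitely-generated distributive sublattice $L'$ of $L$ to (a) be finite — true for finitely generated distributive lattices — and (b) embed into $L$ via a map preserving the structure the functor $\Conv(\,\cdot\,,\mathfrak{X})$ requires. Point (b) is where the choice of morphisms in $\Lat$ (bounds, binary meets, arbitrary joins) matters: the inclusion $L'\hookrightarrow L$ need not preserve infinite joins in general, but since $L'$ is finite there are no nontrivial infinite joins in $L'$, so the condition is vacuously met, and meanwhile $L'$ being a sublattice handles binary meets and joins and the bounds. A secondary subtlety is confirming that term operations really do lie in the clone generated by the \emph{finitely supported, directed-join-preserving} basic operations, so that Proposition~\ref{loop} applies — this is just the observation that the basic operations themselves have both properties (Propositions~\ref{operators} and~\ref{finitely supported}, with Proposition~\ref{opo} converting complete additivity to directed-join preservation), and that constants and projections trivially do.
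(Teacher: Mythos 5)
Your proposal is correct and follows essentially the same route as the paper's own proof: Proposition~\ref{pi} for one direction, then the clone argument (Propositions~\ref{operators}, \ref{opo}, \ref{finitely supported}, \ref{loop}) to reduce to finitely supported tuples, followed by passage to the finite distributive sublattice generated by the finitely many values, whose inclusion vacuously preserves arbitrary joins so that Theorem~\ref{main}/Proposition~\ref{conv} and Corollary~\ref{finite} finish the argument. The bookkeeping points you flag are exactly the ones the paper handles, and in the same way.
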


\begin{proof}
Proposition~\ref{pi} shows that any equation $s\approx t$ in the bounded lattice operations and the operations $(f_i)_I$ that is valid in $L^\mathfrak{X}$ is valid in $\mathfrak{X}^+$. We must show that if $s\approx t$ is valid in the algebra $2^\mathfrak{X}$, then it is valid in $L^\mathfrak{X}$. 

Let $S$ be the operations $(f_i)_I$ of $L^\mathfrak{X}$ together with the bounded lattice operations on $L^\mathfrak{X}$, and let $K$ be the clone generated by $S$. By Proposition~\ref{operators} the operations in $S$ are complete operators, hence preserve joins in each component. So by Proposition~\ref{opo}, the operations in $S$ preserve directed joins. Proposition~\ref{finitely supported} provides that the operations in $S$ are also finitely supported. Then Proposition~\ref{loop} yields that each member of $K$ is finitely supported. In particular, $s$ and $t$ are finitely supported. 

Assume that the terms $s$ and $t$ are $n$-ary, and let $\vec{\alpha}=(\alpha_1,\ldots,\alpha_n)$ be an $n$-tuple in $L^X$. Let $(\vec{\delta}_j)_J$ be the family of $n$-tuples of finitely supported elements of $L^X$ that lie beneath $\vec{\alpha}$. Since $s$ and $t$ are finitely supported, we have 
\vspace{-1ex}

\[ s(\vec{\alpha})=\bigvee \{s(\vec{\delta}_j):j\in J\}\quad\mbox{ and }\quad t(\vec{\alpha})=\bigvee \{t(\vec{\delta}_j):j\in J\}\]
\vspace{-1ex}

\noindent To show that $s(\vec{\alpha})=t(\vec{\alpha})$, it is sufficient to show that $s(\vec{\delta}_j)=t(\vec{\delta}_j)$ for each $j\in J$. 

If $\vec{\delta}=(\delta_1,\ldots,\delta_n)$ is an $n$-tuple of finitely supported elements of $L^X$, then collectively, $\delta_1,\ldots,\delta_n$ take finitely many values in $L$. Since the bounded sublattice generated by a finite subset of bounded distributive lattice is finite, there is a finite bounded sublattice $M$ of $L$ such that each of $\delta_1,\ldots,\delta_n$ take values in $M$, hence with $\vec{\delta}$ an $n$-tuple of elements of $M^\mathfrak{X}$. 

Since there is a bounded lattice embedding of $M$ into $L$ that preserves finite, hence arbitrary joins, Theorem~\ref{main} gives that $M^\mathfrak{X}$ is a subalgebra of $L^\mathfrak{X}$ with respect to the operations in $S$. So the result $s^M(\vec{\delta})$ of evaluating the term $s$ at the $n$-tuple $\vec{\delta}$ of $M^\mathfrak{X}$ is equal to the result $s^L(\vec{\delta})$ of evaluating the term $s$ at the $n$-tuple $\vec{\delta}$ of elements of $L^\mathfrak{X}$. Since $M$ is a finite distributive lattice, Corollary~\ref{finite} provides that $M^\mathfrak{X}$ satisfies the same equations in the operations $S$ as $2^\mathfrak{X}$. So if $s\approx t$ is valid in $2^\mathfrak{X}$, then it is valid in $M^\mathfrak{X}$, and hence $s^M(\vec{\delta})=t^M(\vec{\delta})$, giving that $s^L(\vec{\delta})=t^L(\vec{\delta})$. 

So if $\vec{\delta}$ is any $n$-tuple of finitely supported elements of $L^X$, then $s(\vec{\delta})=t(\vec{\delta})$ in $L^\mathfrak{X}$. This in particular applies to each $\vec{\delta}_j$ for $j\in J$. It follows that $s(\vec{\alpha})=t(\vec{\alpha})$ in $L^\mathfrak{X}$. Since this is true for each $n$-tuple $\vec{\alpha}$ in $L^\mathfrak{X}$, we have that $s\approx t$ is valid in $L^\mathfrak{X}$.
\end{proof}

We have shown that for a non-trivial, complete, meet-continuous distributive lattice, the algebras $L^\mathfrak{X}$ and $\mathfrak{X}^+$ satisfy the same equations in the negation-free language. We show that  these conditions are necessary. Completeness is required for the definition of the convolution algebra to be sensible. We will show that distributivity is required even in the finite setting and in the fragment of the language that does not use the lattice operations.  Among  complete distributive lattices,  meet-continuity is  required  to preserve  equations  valid  in the complex algebra. 

\begin{prop}
For $L$ a complete bounded lattice and $\mathbb{Z}_2$ the 2-element group considered as a relational structure, these are equivalent. 
\vspace{1ex}

\begin{enumerate}
\item The operation of $L^{\mathbb{Z}_2}$ corresponding to addition of $\mathbb{Z}_2$ is associative
\item $L$ is distributive
\end{enumerate}
\end{prop}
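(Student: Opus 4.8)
The plan is to write the operation on $L^{\mathbb{Z}_2}$ out explicitly and then dispatch both implications by short direct computations. I would identify $\mathbb{Z}_2=\{0,1\}$ with addition mod $2$ and view the group operation as the ternary relation $R=\{(x_1,x_2,x):x_1+x_2=x\}$; write a typical element $\alpha\in L^{\{0,1\}}$ as the pair $(\alpha(0),\alpha(1))$. The pairs summing to $0$ are $(0,0)$ and $(1,1)$, and the pairs summing to $1$ are $(0,1)$ and $(1,0)$, so the operation $f$ of $L^{\mathbb{Z}_2}$ corresponding to addition is
\[ f(\alpha,\beta)=\big(\,(\alpha(0)\wedge\beta(0))\vee(\alpha(1)\wedge\beta(1)),\ \ (\alpha(0)\wedge\beta(1))\vee(\alpha(1)\wedge\beta(0))\,\big). \]

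For $(2)\Rightarrow(1)$ I would assume $L$ distributive and expand $f(f(\alpha,\beta),\gamma)$ and $f(\alpha,f(\beta,\gamma))$ one coordinate at a time. Applying the finitary distributive law to push the outermost meet through the inner join, the coordinate indexed by $e\in\{0,1\}$ of each of the two expressions becomes the join of all the meets $\alpha(i)\wedge\beta(j)\wedge\gamma(k)$ with $i+j+k\equiv e\pmod 2$; since the index set is the same on both sides, the two sides coincide. Note that only the binary distributive law is used, so no hypothesis on $L$ beyond what is needed to define $f$ is required here, which matches the fact that the statement asks only for distributivity rather than meet-continuity.

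For $(1)\Rightarrow(2)$ I would argue contrapositively. If $L$ is not distributive then, since the two distributive laws are equivalent in any lattice, there exist $t,u,v\in L$ with $(t\wedge u)\vee(t\wedge v)\neq t\wedge(u\vee v)$. Take $\alpha=(t,0_L)$ and $\beta=\gamma=(u,v)$. A direct computation gives $f(\alpha,\beta)=(t\wedge u,\ t\wedge v)$ and hence $f(f(\alpha,\beta),\gamma)(0)=(t\wedge u)\vee(t\wedge v)$, while $f(\beta,\gamma)=(u\vee v,\ u\wedge v)$ and hence $f(\alpha,f(\beta,\gamma))(0)=t\wedge(u\vee v)$. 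These two values differ by the choice of $t,u,v$, so $f$ fails to be associative.

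Everything here is routine computation, so I do not expect a genuine obstacle; the only real content is spotting the witness $\alpha=(t,0_L)$, $\beta=\gamma=(u,v)$ for the converse, chosen so that zeroing out the second coordinate of $\alpha$ reduces the two associativity expressions evaluated at the element $0$ to precisely the two sides of the bare distributive law. The remaining care is purely bookkeeping: correctly tracking which triple meets $\alpha(i)\wedge\beta(j)\wedge\gamma(k)$ occur in each coordinate of each association in the forward direction.
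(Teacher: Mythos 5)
Your proof is correct and follows essentially the same route as the paper: write out the convolution operation on pairs explicitly, expand both associations using the finitary distributive law for one direction, and specialize the arguments (zeroing out one coordinate of the first factor) to reduce associativity to the bare distributive law for the other. The only difference is the choice of witness in the converse --- the paper takes the third argument to be $(1,1)$ and argues directly from associativity, while you take it equal to $(u,v)$ and argue contrapositively --- but both computations land on the same identity $t\wedge(u\vee v)=(t\wedge u)\vee(t\wedge v)$.
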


\begin{proof}
Let $*$ be the operation of $L^{\mathbb{Z}_2}$ corresponding to addition $+$ of $\mathbb{Z}_2$. We consider $\mathbb{Z}_2=\{0,1\}$ and elements of $L^{\mathbb{Z}_2}$ as ordered pairs $(a_0,a_1)$ of elements of $L$. Then 

\[ [(a_0,a_1)*(b_0,b_1)] (x)\,\,=\,\, \begin{cases} \,(a_0\wedge b_0)\vee(a_1\wedge b_1) &\mbox{if } x = 0 \\ 
\,(a_0\wedge b_1)\vee(a_1\wedge b_0) &\mbox{if } x = 1 \end{cases} 
\]
\vspace{1ex}

Suppose that $(a_0,a_1), (b_0,b_1)$ and $(c_0,c_1)$ are ordered pairs of elements of $L$. We make a calculation using the common device of representing meet by juxtaposition and join by addition to increase readability. Using this notation, $(a_0,a_1)*(b_0,b_1)=(a_0b_0+a_1b_1,a_0b_1+a_1b_0)$. 

\begin{align*}
[(a_0,a_1)*(b_0,b_1)]*(c_0,c_1)
&\,\,=\,\, ((a_0b_0+a_1b_1)c_0+(a_0b_1+a_1b_0)c_1, (a_0b_0+a_1b_1)c_1+(a_0b_1+a_1b_0)c_0) \\
(a_0,a_1)*[(b_0,b_1)*(c_0,c_1)]
&\,\,=\,\,\, (a_0(b_0c_0+b_1c_1)+a_1(b_0c_1+b_1c_0), \, a_0(b_0c_1+b_1c_0)+a_1(b_0c_0+b_1c_1) )
\end{align*}
\vspace{-1ex}

If $L$ is distributive, these two expressions are equal, hence $*$ is associative in $L^{\mathbb{Z}_2}$. This can also be obtained from our general results since $a_0,a_1,b_0,b_1,c_0,c_1$ generate a finite bounded sublattice $M$ of $L$, and $M^{\mathbb{Z}_2}$ satisfies the same equations as $\mathbb{Z}^+$, and multiplication is associative in the complex algebra of any group.  Conversely, suppose that $*$ is associative in $L^{\mathbb{Z}_2}$. Let $a_0,b_0,b_1$ be arbitrary elements of $L$. Choose $a_1$ to be the 0 of $L$, and $c_0,c_1$ to both be the 1 of $L$. Then comparing the first components of each of the above expressions gives $a_0b_0+a_0b_1=a_0(b_0+b_1)$, which is the distributive law. 
\end{proof}

For the following result  we  recall  that  for a set $X$, the  largest relation  on  $X$ is  $X\times X$. We denote this  as $\nabla_X$. For  a complete lattice $L$, and $\mathfrak{X}=(X,\nabla_X)$, the  convolution algebra $L^\mathfrak{X}$ has  an additional unary operation $f$. 

\begin{prop}
\label{zui}
For $L$ a complete bounded distributive lattice, these are equivalent. 
\vspace{1ex}

\begin{enumerate}

\item $L^{(X,\nabla_X)}$ satisfies  $f(a)\wedge f(b)=f(f(a)\wedge b)$ for each set $X$

\item  $L$ satisfies  $u\wedge \bigvee_Jv_j=\bigvee_J u\wedge v_j$ hence is the lattice reduct of a complete Heyting algebra

\end{enumerate}
\end{prop}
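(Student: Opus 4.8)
The plan is to start by computing the unary operation $f$ of $L^{(X,\nabla_X)}$ explicitly. Since $\nabla_X=X\times X$, for every $\alpha\in L^X$ and $x\in X$ we have $f(\alpha)(x)=\bigvee\{\alpha(x_1):(x_1,x)\in\nabla_X\}=\bigvee_{x_1\in X}\alpha(x_1)$, which does not depend on $x$. Thus $f(\alpha)$ is the constant function whose value I will write as $\sigma(\alpha)=\bigvee_{x\in X}\alpha(x)$.

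Next I would rewrite both sides of the equation in (1) in terms of $\sigma$. For $a,b\in L^X$, the function $f(a)\wedge f(b)$ is constant with value $\sigma(a)\wedge\sigma(b)$, since meets in $L^X$ are computed componentwise. On the other side, $f(a)\wedge b$ is the function $x\mapsto\sigma(a)\wedge b(x)$, so $f(f(a)\wedge b)$ is constant with value $\bigvee_{x\in X}\bigl(\sigma(a)\wedge b(x)\bigr)$. Because $\sigma(b)=\bigvee_{x\in X}b(x)$, the equation $f(a)\wedge f(b)=f(f(a)\wedge b)$ holds in $L^{(X,\nabla_X)}$ if and only if
\[ \sigma(a)\wedge\bigvee_{x\in X}b(x)\,\,=\,\,\bigvee_{x\in X}\bigl(\sigma(a)\wedge b(x)\bigr)\qquad\text{for all }a,b\in L^X. \]

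To finish, I would treat the two implications separately. For (2)$\Rightarrow$(1): if $L$ satisfies the infinite distributive law, then for any set $X$ and any $a,b\in L^X$ the displayed identity is simply that law applied with $u=\sigma(a)$ and the $X$-indexed family $(b(x))_{x\in X}$, so (1) holds. For (1)$\Rightarrow$(2): given $u\in L$ and an arbitrary family $(v_j)_{j\in J}$ in $L$, the case $J=\emptyset$ reads $u\wedge 0=0$ and is immediate, so assume $J\neq\emptyset$; take $X=J$, let $a$ be the constant function with value $u$ (so $\sigma(a)=u$, using $J\neq\emptyset$), and let $b(j)=v_j$. The displayed identity then becomes $u\wedge\bigvee_Jv_j=\bigvee_J(u\wedge v_j)$, which is (2). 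That a complete lattice satisfying this law is exactly the lattice reduct of a complete Heyting algebra is the standard fact recalled just before Definition~\ref{defnoperator} (the Heyting implication is $a\to b=\bigvee\{c:a\wedge c\leq b\}$), so the parenthetical clause of (2) needs no separate argument.

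The computation is essentially routine; the only points requiring care are realizing an arbitrary element $u\in L$ as $\sigma(a)$ by using a constant function (which is why the nonempty-$X$ case must be separated from the degenerate $J=\emptyset$ case), and the appeal to the known equivalence between the infinite distributive law and complete Heyting algebra structure. I do not anticipate any substantial obstacle.
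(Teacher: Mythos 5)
Your proof is correct. For the direction (1)$\Rightarrow$(2) it is essentially the paper's argument: take $X=J$, let $a$ be the constant function with value $u$ and $b(j)=v_j$, and read off the infinite distributive law; you are in fact slightly more careful than the paper in separating the degenerate case $J=\emptyset$, where $\sigma(a)=0\neq u$ and the constant-function trick fails (the law is trivial there anyway). Where you genuinely diverge is the direction (2)$\Rightarrow$(1). The paper does not compute anything in $L^{(X,\nabla_X)}$: it observes that the complex algebra $(X,\nabla_X)^+$ satisfies the equation (since $f(A)$ is $\emptyset$ or $X$ according as $A$ is empty or not) and then invokes Theorem~\ref{eqns}, the general preservation-of-equations result for lattice reducts of complete Heyting algebras. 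You instead reduce both sides explicitly to the identity $\sigma(a)\wedge\bigvee_{x}b(x)=\bigvee_{x}(\sigma(a)\wedge b(x))$ and apply meet-continuity directly. Your route is more elementary and self-contained, and it makes transparent that the equation in (1) is \emph{exactly} the infinite distributive law in disguise (so the equivalence is really a tautology once $f$ is computed); the paper's route is shorter on the page and showcases the correspondence machinery, at the cost of calling on a much heavier theorem for what is the easy half of the equivalence. Both are valid.
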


\begin{proof}
To see that the first condition implies the second, suppose that $u\in L$ and $v_j$ $(j\in J)$ is a family of elements in $L$ indexed over a set $J$.  Consider the convolution algebra $L^{(J,\nabla_J)}$. In this convolution algebra  take  the elements $\alpha,\beta:J\to L$ defined by setting  $\alpha(j)=u$ for each $j\in J$ and $\beta(j)=v_j$ for each $j\in J$.  Note that for any $\gamma:J\to L$, that \vspace{-1ex}

$$f(\gamma)(j)=\bigvee\{\gamma(i):i\nabla_Jj\}$$
\vspace{-1ex}

\noindent So $f(\gamma)(j)=\bigvee\{\gamma(i):i\in J\}$ for each $j\in J$. In particular, $f(\gamma)$ is a constant function. We write $f(\gamma)=w$ if this constant function takes value $w$. Simple calculations give  
\vspace{-1ex}

\[f(\alpha)=u,\quad f(\beta)=\bigvee_Jv_j\quad\mbox{ and } \quad f(f(\alpha)\wedge\beta)=\bigvee_Ju\wedge v_j\]
\vspace{-1ex}

\noindent Since we have assumed that $L^{(J,\nabla_J)}$ satisfies $f(a)\wedge f(b)=f(f(a)\wedge b)$, it follows that $u\wedge\bigvee_Jv_j=\bigvee_Ju\wedge v_j$. Thus $L$ is the reduct of a complete Heyting algebra. 

For the converse, let $X$ be a set. The operation $f$ of the complex algebra $(X,\nabla_X)^+$ is given by $f(A)=\emptyset$ if $A=\emptyset$, and $f(A)=X$ otherwise.  It follows that  this complex algebra satisfies $f(f(a)\wedge b)=f(a)\wedge f(b)$. The result then follows from Theorem~\ref{eqns}.
\end{proof}

\section{Extensions}

In this section we describe a number of extensions to the method of constructing convolution algebras and the results obtained about convolution algebras. These extensions are very much in the spirit of the results previously obtained, and the proofs are similar. All these extensions are initiated by corresponding extensions to the construction of complex algebras, particularly as it is applied in applications to modal logic. The reader should see \cite{Goldblatt} for an account. We begin with a counterpart of Definition~\ref{defnoperator}. 

\begin{defn}
\label{defndualoperator}
An $n$-ary operation $f$ on a lattice $L$ is multiplicative in its $k^{th}$ component if for each finite family $(y_j)_J$ in $L$ and each $x_1,\ldots,x_{k-1},x_{k+1},\ldots,x_n$ we have 

\[ f(x_1,\ldots,x_{k-1},\bigwedge_Jy_j,x_{k+1},\ldots,x_n\}=\bigwedge_Jf(x_1,\ldots,x_{k-1},y_j,x_{k+1},\ldots,x_n) \]

\noindent In a complete lattice, an operation is completely multiplicative in its $k^{th}$ component if the same holds for an arbitrary family $(y_j)_J$. Finally, $f$ is called a dual operator if it is multiplicative in each component, and a complete dual operator if it is completely multiplicative in each component. 
\end{defn}

In modal logic, the operator $\Diamond$ is an operator and its counterpart $\Box$ is a dual operator. Both can be obtained from a relational structure. We have discussed how operators $f_i$ are obtained from a relational structure $\mathfrak{X}$ by taking relational image. We next discuss how dual operators are obtained. We temporarily introduce some unconventional terminology and notation, that of the dual complex algebra $\mathfrak{X}^{\,-}$. The reader should compare with Definition~\ref{complex}. 

\begin{defn}
Let $\mathfrak{X}=(X,(S_i)_I)$ be a relational structure of type $\tau$, and let $\mathcal{P}(X)$ be the power set of $X$.  Define an algebra $\mathfrak{X}^{\,-}=(\mathcal{P}(X),(h_i)_I)$ of type $\tau$, called the dual complex algebra of $\mathfrak{X}$, by setting for each $i\in I$ and each family of subsets $A_1,\ldots,A_{n_i}\subseteq X$ 
\vspace{-1.5ex}

\[ h_i(A_1,\ldots,A_{n_i})=\{x:\mbox{for each }(x_1,\ldots,x_{n_i},x)\in S_i\mbox{ there is }1\leq j\leq n_i \mbox{ with }x_j\in A_j\}\]
\end{defn}

It is well known \cite{Goldblatt}, and easily seen, that each of the operations $(h_i)_I$, as well as the lattice operations $\wedge,\vee$, of the dual complex algebra $\mathfrak{X}^{\,-}$ are complete dual operators. We connect these dual complex algebras with an extension of convolution algebras as follows. 

\begin{defn} 
Given a relational structure $\mathfrak{X}=(X,(S_i)_I)$ of type $\tau$ and a complete lattice $L$, define an algebra $L^{\mathfrak{X}\,-}=(L^X,(g_i)_I)$ of type $\tau$, called the dual convolution algebra of $\mathfrak{X}$ over $L$, by setting for each $i\in I$, each $\alpha_1,\ldots,\alpha_{n_i}\in L^X$ and each $x\in X$
\vspace{-1ex}

\[ g_i(\alpha_1,\ldots,\alpha_{n_i})(x)\,\,=\,\,\bigwedge\{\alpha_1(x_1)\vee\cdots\vee\alpha_{n_i}(x_{n_i}):(x_1,\ldots,x_{n_i},x)\in S_i\}\]
\end{defn}
\vspace{1ex} 

As the following result shows, the relationship between dual complex algebras and dual convolution algebras is completely analogous to the relationship between complex algebras and convolution algebras.

\begin{prop}
\label{ccc}
For a relational structure $\mathfrak{X}$, the dual convolution algebra $2^{\mathfrak{X}\,-}$ is isomorphic to the dual complex algebra $\mathfrak{X}^{\,-}$. 
\end{prop}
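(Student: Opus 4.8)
The plan is to mimic the proof of Proposition~\ref{curr} almost verbatim, exploiting the order-theoretic duality between the two definitions. As in that proof, the underlying map will be the canonical Boolean algebra isomorphism $\psi:2^X\to\mathcal{P}(X)$, but here it is more natural to use $\psi(\alpha)=\{x:\alpha(x)=0\}$ (equivalently one could use the complement-composed version of $\phi$), since $g_i$ is built from $\bigwedge$ and $\vee$, and in the two-element lattice $2$ a \emph{meet} of a family equals $0$ if and only if one of the meetands is $0$, while a \emph{join} $\alpha_1(x_1)\vee\cdots\vee\alpha_{n_i}(x_{n_i})$ equals $0$ if and only if every $\alpha_j(x_j)=0$. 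This is exactly the pattern of quantifier alternation (``for each $\ldots$ there is $1\le j\le n_i$ with $\ldots$'') appearing in the definition of $h_i$.

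First I would verify that $\psi$ is a bounded-lattice isomorphism from $2^X$ onto $\mathcal{P}(X)$; this is routine and dual to the statement already used for $\phi$. Then the one substantive step is to check compatibility with the additional operations: for each $i\in I$ and $\alpha_1,\ldots,\alpha_{n_i}\in 2^X$, show $\psi(g_i(\alpha_1,\ldots,\alpha_{n_i}))=h_i(\psi(\alpha_1),\ldots,\psi(\alpha_{n_i}))$. Fixing $x\in X$, I would run the chain of equivalences:
\begin{align*}
x\in\psi(g_i(\alpha_1,\ldots,\alpha_{n_i}))
&\mbox{ iff } g_i(\alpha_1,\ldots,\alpha_{n_i})(x)=0\\
&\mbox{ iff } \alpha_1(x_1)\vee\cdots\vee\alpha_{n_i}(x_{n_i})=0\mbox{ for each }(x_1,\ldots,x_{n_i},x)\in S_i\\
&\mbox{ iff } \mbox{for each }(x_1,\ldots,x_{n_i},x)\in S_i,\ \alpha_j(x_j)=0\mbox{ for all }1\le j\le n_i.
\end{align*}
Wait -- this last line is not yet what $h_i$ demands; $h_i$ wants ``there is $1\le j\le n_i$ with $x_j\in A_j$'', i.e. with $\alpha_j(x_j)=0$. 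The resolution is that $\psi(\alpha_j)=\{x:\alpha_j(x)=0\}$, so $x_j\in\psi(\alpha_j)$ means $\alpha_j(x_j)=0$, and a join in $2$ is $0$ iff \emph{all} joinands are $0$; but to match the existential in $h_i$ I should instead use $\psi(\alpha)=\{x:\alpha(x)=1\}$ and observe that the join is $1$ iff \emph{some} joinand is $1$. Let me therefore take $\psi=\phi$ (the same map as in Proposition~\ref{curr}) and compute with the condition $g_i(\ldots)(x)=1$: then $g_i(\alpha_1,\ldots,\alpha_{n_i})(x)=1$ iff the meet over all $S_i$-predecessors is $1$ iff for every $(x_1,\ldots,x_{n_i},x)\in S_i$ the join $\alpha_1(x_1)\vee\cdots\vee\alpha_{n_i}(x_{n_i})=1$ iff for every such predecessor there is some $j$ with $\alpha_j(x_j)=1$, i.e. $x_j\in\phi(\alpha_j)$ --- which is precisely $x\in h_i(\phi(\alpha_1),\ldots,\phi(\alpha_{n_i}))$. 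As in Proposition~\ref{curr}, I would close by remarking that the empty case $n_i=0$ is handled correctly because the join of the empty family in $2$ is $0$, so an empty disjunct is never $1$, matching the behavior of $h_i$ on the empty relation.

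There is no real obstacle here: the only thing requiring care is choosing the right truth-value convention so that the quantifier alternation in the definition of $h_i$ (a universal over $S_i$-tuples wrapped around an existential over coordinates) lines up with ``meet equals $1$'' wrapped around ``join equals $1$'' in the two-element lattice. Once $\phi$ is fixed as the map sending $\alpha$ to $\{x:\alpha(x)=1\}$, everything is a mechanical translation of De Morgan-type facts about $2$, and the proof is a three- or four-line display of iff's exactly paralleling the proof of Proposition~\ref{curr}.
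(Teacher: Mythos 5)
Your proposal is correct and, after the self-correcting detour through $\psi(\alpha)=\{x:\alpha(x)=0\}$, lands on exactly the paper's argument: the same isomorphism $\phi(\alpha)=\{x:\alpha(x)=1\}$, the same chain of equivalences using that a meet in $2$ is $1$ iff all meetands are $1$ and a join is $1$ iff some joinand is $1$, and the same observation for the $n_i=0$ case. No gaps.
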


\begin{proof}
We show the Boolean algebra isomorphism $\phi:2^X\to \mathcal{P}(X)$ given by $\phi(\alpha)=\{x:\alpha(x)=1\}$ is an isomorphism from the dual convolution algebra $2^{\mathfrak{X}\,-}$ to the dual complex algebra $\mathfrak{X}^{\,-}$. Let $i\in I$, $\alpha_1,\ldots,\alpha_{n_i}\in 2^X$, and $x\in X$. 
\vspace{-1ex}

\begin{align*}
x\in\phi(g_i(\alpha_1,\ldots,\alpha_{n_i})) & \mbox{ iff } g_i(\alpha_1,\ldots,\alpha_{n_i})(x) = 1\\
&\mbox{ iff }  \mbox{ for each }(x_1,\ldots,x_{n_i},x)\in S_i  \mbox{ we have }\alpha_1(x_1)\vee\cdots\vee\alpha_{n_i}(x_{n_i})=1\\
&\mbox{ iff } \mbox{ for each }(x_1,\ldots,x_{n_i},x)\in S_i \mbox{ there is $1\leq j\leq n_i$ with }\alpha_j(x_j)=1\\
&\mbox{ iff } \mbox{ for each }(x_1,\ldots,x_{n_i},x)\in S_i \mbox{ there is $1\leq j\leq n_i$ with }x_j\in\phi(\alpha_j)\\
&\mbox{ iff } x\in h_i(\phi(\alpha_1),\ldots,\phi(\alpha_{n_i}))
\end{align*}
\vspace{-1ex}

\noindent The above reasoning holds also for the case when $n_i=0$. In this case, $h_i(\,) = \{x:x\not\in S_i\}$. 
\end{proof}

Results for convolution algebras have their counterparts for dual convolution algebras. For functorial matters, we require the category $\Lat^{\,-}$ of complete lattices and maps that preserve bounds, finite joins, and arbitrary meets. We summarize matters below. 

\begin{thm}
There is a bifunctor $\Conv^{\,-}:\Lat^{\,-}\times\Rel_\tau\to\Alg_\tau$ that is covariant in the first argument and contravariant in the second. This bifunctor preserves products and preserves and reflects one-one and onto maps in the first argument. In the second argument, it takes coproducts to products, one-one maps to onto maps, and onto maps to one-one maps. 
\end{thm}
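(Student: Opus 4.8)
The plan is to obtain this theorem by transporting all of the covariant-first-argument results already proved for $\Conv(\,\cdot\,,\,\cdot\,)$ to the dual setting via the order-duality $L\mapsto L^{\mathrm{op}}$, combined with the observation in Proposition~\ref{ccc} that $2^{\mathfrak{X}\,-}$ is the dual complex algebra. The key structural fact is that interchanging the roles of $\wedge$ and $\vee$ in $L$ converts formula (1) into formula (2): if $L^{\mathrm{op}}$ denotes $L$ with order reversed, then $g_i$ computed in $L$ is literally $f_i$ computed in $L^{\mathrm{op}}$, so $L^{\mathfrak{X}\,-} = (L^{\mathrm{op}})^{\mathfrak{X}}$ with the lattice operations of $L$ reinstated. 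Under this identification a map $\phi:L\to M$ preserving bounds, finite joins, and arbitrary meets is exactly a morphism $\phi:L^{\mathrm{op}}\to M^{\mathrm{op}}$ in $\Lat$, so $\Lat^{\,-}$ is isomorphic to $\Lat$ via $L\mapsto L^{\mathrm{op}}$.

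First I would define $\Conv^{\,-}$ on objects by $(L,\mathfrak{X})\mapsto L^{\mathfrak{X}\,-}$ and on morphisms in the first argument by $\phi\mapsto\phi^\mathfrak{X}$ (still acting by $\alpha\mapsto\phi\circ\alpha$) and in the second argument by $p\mapsto p^L$ (acting by $\beta\mapsto\beta\circ p$), exactly paralleling Proposition~\ref{conv} and the functor $\Conv(L,\,\cdot\,)$. Then I would verify functoriality and the homomorphism property of $\phi^\mathfrak{X}$ and $p^L$ by invoking the dictionary above: since $L^{\mathfrak{X}\,-}=(L^{\mathrm{op}})^{\mathfrak{X}}$ as algebras of type $\tau$ (forgetting which lattice operations are called meet and join), and the lattice operations are componentwise in both source and target, everything reduces to Proposition~\ref{conv}, the functor $\Conv(L,\,\cdot\,)$, Proposition~\ref{prod}, and Proposition~\ref{reflects} applied to $L^{\mathrm{op}}$. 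In particular preservation of products in the first argument, and preservation and reflection of one-one and onto maps in the first argument, are immediate from Propositions~\ref{prod} and \ref{reflects} since products and (in)jectivity of functions are self-dual notions. For the second argument, the behaviour of $p^L$ on coproducts of relational structures, on one-one $p$-morphisms, and on onto $p$-morphisms is exactly as in the remarks following the construction of $\Conv(L,\,\cdot\,)$, the arguments being unaffected by whether we take joins or meets in (1) versus (2); the only point to check is that $p^L:L^{\mathfrak{Y}\,-}\to L^{\mathfrak{X}\,-}$ commutes with each $g_i$, and that is the same computation as in the proof for $\Conv(L,\,\cdot\,)$ with the defining formula (2) in place of (1), using that a $p$-morphism matches up predecessor tuples and that $\wedge,\vee$ in $L$ are componentwise.

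I would then state explicitly the one genuinely new verification: that for $\phi\in\Lat^{\,-}$ the map $\phi^\mathfrak{X}$ commutes with the operations $g_i$. This is where one uses that $\phi$ preserves arbitrary meets and finite joins: evaluating $\phi^\mathfrak{X}(g_i(\alpha_1,\ldots,\alpha_{n_i}))$ at $x$ gives $\phi\bigl(\bigwedge\{\alpha_1(x_1)\vee\cdots\vee\alpha_{n_i}(x_{n_i}):(x_1,\ldots,x_{n_i},x)\in S_i\}\bigr)$, which pushes through the meet and the finite joins to become $\bigwedge\{\phi(\alpha_1(x_1))\vee\cdots\vee\phi(\alpha_{n_i}(x_{n_i})):(x_1,\ldots,x_{n_i},x)\in S_i\}$, i.e. $g_i(\phi^\mathfrak{X}(\alpha_1),\ldots,\phi^\mathfrak{X}(\alpha_{n_i}))(x)$; this is word-for-word the dual of the last display in the proof of Proposition~\ref{conv}. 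The main (and only real) obstacle is bookkeeping: one must be careful that $\Lat^{\,-}$ morphisms are required to preserve \emph{arbitrary} meets but only \emph{finite} joins, mirroring exactly the asymmetry in $\Lat$, so that the $\mathrm{op}$-translation is an honest isomorphism of categories and no hypothesis is silently strengthened. Once that is pinned down, I would simply remark that every assertion of the theorem is the image under order-duality of the corresponding assertion of Theorem~\ref{main}, with Proposition~\ref{ccc} supplying the base case $2^{\mathfrak{X}\,-}\cong\mathfrak{X}^{\,-}$, and conclude.
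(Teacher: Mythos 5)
Your proposal is correct and matches the paper's intent: the paper states this theorem without proof, presenting it as the summary of results obtained by dualizing Theorem~\ref{main} and the propositions leading to it, which is precisely the order-duality argument you carry out. Your identification $L^{\mathfrak{X}\,-}=(L^{\mathrm{op}})^{\mathfrak{X}}$ together with the category isomorphism $\Lat^{\,-}\cong\Lat$ via $L\mapsto L^{\mathrm{op}}$, and your explicit dual verification that $\phi^{\mathfrak{X}}$ commutes with the $g_i$, make rigorous exactly the dualization the authors leave to the reader.
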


Matters are best behaved when $L$ is the lattice reduct of the dual of a complete Heyting algebra, or in other words, a complete Browerian lattice. 

\begin{thm}
For $L$ a complete Browerian lattice, the operations of  $L^{\mathfrak{X}\,-}$, including the lattice meet and join, are complete dual operators. Further, if $L$ is non-trivial, then $L^{\mathfrak{X}\,-}$ and $\mathfrak{X}^{\,-}$ satisfy the same equations in the negation-free language. 
\end{thm}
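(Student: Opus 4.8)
The plan is to derive both assertions from their join-side counterparts, Proposition~\ref{operators} and Theorem~\ref{eqns}, by order duality, rather than reproving anything. Write $L^{\partial}$ for the order dual of a complete lattice $L$. Two bookkeeping facts do all the work. First, $L$ is a complete Browerian lattice exactly when $L^{\partial}$ is the lattice reduct of a complete Heyting algebra, since the law $u\vee\bigwedge_J v_j=\bigwedge_J(u\vee v_j)$ in $L$ is, symbol for symbol, the law $u\wedge\bigvee_J v_j=\bigvee_J(u\wedge v_j)$ in $L^{\partial}$. Second, unwinding the definitions, the dual convolution algebra $L^{\mathfrak{X}\,-}$ and the convolution algebra $(L^{\partial})^{\mathfrak{X}}$ (formed over the same relational structure) share the underlying set $L^X$; the operation $g_i$ of $L^{\mathfrak{X}\,-}$, built from $\bigwedge$ and $\vee$ in $L$, is literally the operation $f_i$ of $(L^{\partial})^{\mathfrak{X}}$, built from $\bigvee$ and $\wedge$ in $L^{\partial}$; and the componentwise join, meet, $0$, $1$ of $L^{\mathfrak{X}\,-}$ are the componentwise meet, join, $1$, $0$ of $(L^{\partial})^{\mathfrak{X}}$. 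In short, $L^{\mathfrak{X}\,-}$ is obtained from $(L^{\partial})^{\mathfrak{X}}$ by the relabeling $\wedge\leftrightarrow\vee$, $0\leftrightarrow 1$, $f_i\mapsto g_i$.

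For the first assertion, since $L^{\partial}$ is the lattice reduct of a complete Heyting algebra, Proposition~\ref{operators} gives that the operations $f_i$ of $(L^{\partial})^{\mathfrak{X}}$, together with its componentwise meet, join and bounds, are complete operators, i.e.\ completely additive in each coordinate. Applying the relabeling above turns complete additivity into complete multiplicativity, so the operations $g_i$ of $L^{\mathfrak{X}\,-}$, together with its componentwise meet and join, are complete dual operators (and the bounds are vacuously so).

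For the second assertion, given an equation $s\approx t$ in the negation-free signature $\wedge,\vee,0,1,(g_i)_I$, let $s^{\partial}\approx t^{\partial}$ be its mirror image under $\wedge\leftrightarrow\vee$, $0\leftrightarrow 1$, $g_i\mapsto f_i$; this is a bijection between negation-free equations of the two signatures. By the relabeling, $L^{\mathfrak{X}\,-}\models s\approx t$ iff $(L^{\partial})^{\mathfrak{X}}\models s^{\partial}\approx t^{\partial}$, and similarly $2^{\mathfrak{X}\,-}\models s\approx t$ iff $(2^{\partial})^{\mathfrak{X}}\models s^{\partial}\approx t^{\partial}$. The two-element lattice is self-dual, so applying the functor $\Conv(\,\cdot\,,\mathfrak{X})$ of Proposition~\ref{conv} to a lattice isomorphism $2^{\partial}\to 2$ gives $(2^{\partial})^{\mathfrak{X}}\cong 2^{\mathfrak{X}}$ in $\Alg_\tau$; hence $(2^{\partial})^{\mathfrak{X}}\models s^{\partial}\approx t^{\partial}$ iff $2^{\mathfrak{X}}\models s^{\partial}\approx t^{\partial}$. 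Finally, since $L$ is non-trivial so is $L^{\partial}$, and it is the lattice reduct of a complete Heyting algebra with at least two elements, so Theorem~\ref{eqns} gives $(L^{\partial})^{\mathfrak{X}}\models s^{\partial}\approx t^{\partial}$ iff $2^{\mathfrak{X}}\models s^{\partial}\approx t^{\partial}$. Chaining these equivalences yields $L^{\mathfrak{X}\,-}\models s\approx t$ iff $2^{\mathfrak{X}\,-}\models s\approx t$, and Proposition~\ref{ccc} identifies $2^{\mathfrak{X}\,-}$ with $\mathfrak{X}^{\,-}$, which is the claim.

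I expect no genuine obstacle: the one thing to pin down carefully is the pair of bookkeeping facts in the first paragraph, above all that $L^{\mathfrak{X}\,-}$ really is the literal relabeling of $(L^{\partial})^{\mathfrak{X}}$, so that statements about the latter transfer verbatim. A longer alternative would avoid duality altogether, dualizing Propositions~\ref{operators}--\ref{loop} (complete dual operators preserve codirected meets; an operation is ``cofinitely supported'' when the set where its value differs from $1$ is finite; such operations form a clone) and re-running the proof of Theorem~\ref{eqns} with meets and joins interchanged; this works but only duplicates the earlier arguments.
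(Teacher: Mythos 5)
Your proof is correct and is exactly the duality argument the paper intends: the paper states this theorem without a written proof, asserting that results for convolution algebras "have their counterparts" for dual convolution algebras via the symmetry between the two defining formulas, and your careful reduction of $L^{\mathfrak{X}\,-}$ to $(L^{\partial})^{\mathfrak{X}}$ (using the self-duality of $2$ and Theorem~\ref{eqns} applied to $L^{\partial}$) makes that implicit argument precise.
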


One can combine the processes of forming complex algebras and dual complex algebras \cite{Goldblatt}. The type of a relational structure can be extended to an ordered pair $\tau=(\tau_1,\tau_2)$ of types, with a relational structure $\mathfrak{X}=(X,(R_i)_I,(S_j)_J)$ of this extended type being a set $X$ with two families of relations, a family $(R_i)_I$ of type $\tau_1$, and a family $(S_j)_J$ of type $\tau_2$. The complex algebra of this extended relational structure $\mathfrak{X}^*=(\mathcal{P}(X),(f_i)_I,(g_j)_J)$ consists of the power set of $X$ with two families of operations, one family $(f_i)_I$ of operators of type $\tau$ formed from the relations $(R_i)_I$, and a family $(g_j)_J$ of dual operators formed from the relations $(S_j)_J$.  

\begin{defn}
For a bounded lattice $L$ and relational structure $\mathfrak{X}=(X,(R_i)_I,(S_j)_J)$ of extended type $\tau=(\tau_1,\tau_2)$, let $L^{\mathfrak{X}*}=(L^X,(f_i)_I,(g_j)_J)$ where 
\vspace{-1ex}

\begin{align*}
f_i(\alpha_1,\ldots,\alpha_{n_i})(x)
&\,\,=\,\,\bigvee\{\alpha_1(x_1)\wedge\cdots\wedge\alpha_{n_i}(x_{n_i}):(x_1,\ldots,x_{n_i},x)\in R_i\}\\
g_i(\alpha_1,\ldots,\alpha_{n_j})(x)
&\,\,=\,\,\bigwedge\{\alpha_1(x_1)\vee\cdots\vee\alpha_{n_j}(x_{n_j}):(x_1,\ldots,x_{n_j},x)\in S_j\}
\end{align*} 
\vspace{-1ex}

\noindent Call $L^{\mathfrak{X}*}$ the convolution of the extended relational structure $\mathfrak{X}$ over $L$. 
\end{defn}

If $\mathfrak{X}$ has ordinary type $\tau$, its convolution algebra $L^\mathfrak{X}$ is the extended convolution algebra $L^{\mathfrak{X}*}$ when $\mathfrak{X}$ is considered to have extended type $(\tau,\emptyset)$, and its dual convolution algebra $L^{\mathfrak{X}-}$ is the extended convolution algebra $L^{\mathfrak{X}*}$ when $\mathfrak{X}$ is considered to have extended type $(\emptyset,\tau)$. There are natural extensions to our results for these extended convolution algebras. 

\begin{prop}
\label{der}
Let $\mathfrak{X}$ be a relational structure of extended type $\tau=(\tau_1,\tau_2)$. Then the extended convolution algebra $2^{\mathfrak{X}*}$ is isomorphic to the extended complex algebra $\mathfrak{X}^*$. 
\end{prop}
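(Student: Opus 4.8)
The plan is to reuse verbatim the Boolean algebra isomorphism $\phi:2^X\to\mathcal{P}(X)$ of Propositions~\ref{curr} and~\ref{ccc}, namely $\phi(\alpha)=\{x:\alpha(x)=1\}$, and simply to observe that the two families of operations of an extended convolution algebra decouple: the $(f_i)_I$-part is literally the convolution construction of Proposition~\ref{curr} and the $(g_j)_J$-part is literally the dual convolution construction of Proposition~\ref{ccc}. So no new computation is needed beyond assembling these two results.

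First I would note that $\phi$ is a bounded lattice (indeed Boolean algebra) isomorphism from the lattice reduct of $2^{\mathfrak{X}*}$ onto that of $\mathfrak{X}^*$, exactly as in the earlier propositions, since in both cases the underlying lattice is $\mathcal{P}(X)$ under the classical identification $\alpha\leftrightarrow\{x:\alpha(x)=1\}$. Next, write $\mathfrak{Y}=(X,(R_i)_I)$ for the $\tau_1$-reduct of $\mathfrak{X}$ and $\mathfrak{Z}=(X,(S_j)_J)$ for its $\tau_2$-reduct. By the defining formulas, for each $i\in I$ the operation $f_i$ of $2^{\mathfrak{X}*}$ is exactly the operation $f_i$ of the convolution algebra $2^{\mathfrak{Y}}$, and the operation $f_i$ of $\mathfrak{X}^*$ is exactly the operation $f_i$ of the complex algebra $\mathfrak{Y}^+$; hence Proposition~\ref{curr} applied to $\mathfrak{Y}$ gives $\phi(f_i(\alpha_1,\ldots,\alpha_{n_i}))=f_i(\phi(\alpha_1),\ldots,\phi(\alpha_{n_i}))$. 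Symmetrically, for each $j\in J$ the operation $g_j$ of $2^{\mathfrak{X}*}$ coincides with the operation $g_j$ of the dual convolution algebra $2^{\mathfrak{Z}\,-}$, and the operation $g_j$ of $\mathfrak{X}^*$ coincides with the operation $h_j$ of the dual complex algebra $\mathfrak{Z}^{\,-}$; so Proposition~\ref{ccc} applied to $\mathfrak{Z}$ gives that $\phi$ commutes with each $g_j$. Combining, $\phi$ is an isomorphism of algebras of extended type $\tau=(\tau_1,\tau_2)$, which is the assertion.

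I do not expect any genuine obstacle: the extended construction is just the ``disjoint overlay'' of the two constructions already analyzed, so the argument is pure bookkeeping, and even the nullary cases $n_i=0$ or $n_j=0$ are already dispatched by the closing remarks in the proofs of Propositions~\ref{curr} and~\ref{ccc}. The only point worth stating carefully is that the lattice reducts agree, so that ``isomorphism'' is meant with respect to the full extended signature rather than merely the $(f_i)_I,(g_j)_J$ fragment; but this is immediate since $\phi$ is already known to be a Boolean algebra isomorphism.
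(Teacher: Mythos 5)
Your proposal is correct and is exactly what the paper intends: its entire proof reads ``The proofs of Propositions~\ref{curr} and \ref{ccc} can be combined,'' and your argument simply spells out that combination by splitting $\mathfrak{X}$ into its $\tau_1$- and $\tau_2$-reducts and applying each earlier proposition to the corresponding family of operations. No further comment is needed.
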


\begin{proof}
The proofs of Propositions~\ref{curr} and \ref{ccc} can be combined. 
\end{proof}

Let $\Lat^*$ be the category of bounded lattices with morphisms being maps that preserve bounds and arbitrary joins and meets. For an extended type $\tau=(\tau_1,\tau_2)$ let a $p$-morphism between relational structures $\mathfrak{X}=(X,(R_i)_I,(S_j)_J)$ and $\mathfrak{Y}=(Y,(R_i')_I,(S_j')_J)$ of this extended type be a function $p:X\to Y$ that is a $p$-morphism considered as a function from $(X,(R_i)_I)$ to $(Y,(R_i')_I)$ and from $(X,(S_j)_J)$ to $(Y,(S_j')_J)$. Then let $\Rel_\tau$ be the category of relational structures of extended type $\tau$ and the $p$-morphisms between them. Finally, let $\Alg_\tau$ be the category of algebras consisting of bounded lattices with additional families of operations of types $\tau_1$ and $\tau_2$ together with the homomorphisms between them. Combining earlier results gives the following. 

\begin{thm}
\label{lop}
For $\tau$ an extended type, there is a bifunctor $\Conv(\,\cdot\,,\,\cdot\,):\Lat^*\times\Rel_\tau\to\Alg_\tau$ that is covariant in the first argument and contravariant in the second. This bifunctor preserves products and preserves and reflects one-one and onto maps in the first argument, and takes coproducts to products and interchanges one-one and onto maps in the second argument. 
\end{thm}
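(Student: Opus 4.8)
The plan is to assemble Theorem~\ref{lop} from the pieces already in hand, since an extended type $\tau=(\tau_1,\tau_2)$ is just two ordinary types glued together and the operator-type operations $(f_i)_I$ and the dual-operator-type operations $(g_j)_J$ interact with the lattice structure of $L^X$ independently of one another. First I would treat the first argument. Fix a relational structure $\mathfrak{X}=(X,(R_i)_I,(S_j)_J)$ of extended type and check that $\Conv(\,\cdot\,,\mathfrak{X}):\Lat^*\to\Alg_\tau$ is a functor sending $\phi:L\to M$ to $\phi^\mathfrak{X}(\alpha)=\phi\circ\alpha$. The object part and preservation of composition are immediate. For the morphism part, the computation in the proof of Proposition~\ref{conv} shows $\phi^\mathfrak{X}$ respects the bounded lattice operations and each $f_i$, using only that $\phi$ preserves bounds, arbitrary joins, and finite meets; the dual computation (the one behind the dual bifunctor theorem stated above) shows $\phi^\mathfrak{X}$ respects each $g_j$, using only that $\phi$ preserves bounds, arbitrary meets, and finite joins. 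Since a morphism of $\Lat^*$ preserves bounds, arbitrary joins, and arbitrary meets, it satisfies both sets of hypotheses simultaneously, so $\phi^\mathfrak{X}$ is a morphism of $\Alg_\tau$.

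Preservation of products in the first argument then follows by applying the isomorphism $\Phi$ of Proposition~\ref{prod} to a family $(L_k)_K$: that proof verifies compatibility of $\Phi$ with an operation defined by a join-of-meets convolution, and the identical argument with meets and joins interchanged handles the dual operations $g_j$, the lattice part being unchanged. Preservation and reflection of one-one and onto maps in the first argument is Proposition~\ref{reflects}, whose proof uses only the pointwise description of $\phi^\mathfrak{X}$ together with $X\neq\emptyset$ and is therefore insensitive to which operations are present.

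For the second argument I would fix a bounded lattice $L$ and show $\Conv(L,\,\cdot\,):\Rel_\tau\to\Alg_\tau$ is a contravariant functor sending a $p$-morphism $p:\mathfrak{X}\to\mathfrak{Y}$ to $p^L(\beta)=\beta\circ p$. By definition an extended-type $p$-morphism is simultaneously a $p$-morphism for the $(R_i)$-reducts and for the $(S_j)$-reducts, so the computation in the proof of the ordinary contravariant functor proposition applies verbatim to each $f_i$, and the same computation with meets and joins swapped applies to each $g_j$; that $p^L$ preserves all joins, all meets, and the bounds is exactly as before. The facts that $\Conv(L,\,\cdot\,)$ carries coproducts (disjoint unions of relational structures, now with two relation families) to products and interchanges one-one and onto $p$-morphisms are proved exactly as recorded in the remarks following Theorem~\ref{main}, applied to both families at once. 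Combining the covariant functor in the first slot with the contravariant functor in the second yields the bifunctor, and collecting the properties just listed gives the statement.

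The proof presents essentially no obstacle: every ingredient is a special case or a mild variant of something already proved. The only point demanding a moment's thought is the bookkeeping observation that the morphism conditions defining $\Lat^*$ --- preserve bounds, all joins, all meets --- are precisely the union of the conditions needed to make $\phi^\mathfrak{X}$ respect the operator-type operations and the dual-operator-type operations respectively; once that is noted, the rest is routine.
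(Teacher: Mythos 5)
Your proposal is correct and takes essentially the same route as the paper, whose entire proof of Theorem~\ref{lop} is the single sentence ``Combining earlier results gives the following''; you have simply spelled out which earlier results are combined and why the $\Lat^*$ morphism conditions cover both the operator and dual-operator computations. Nothing further is needed.
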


The complete lattices $L$ that worked well with the convolution construction were ones that satisfied the meet continuous law: $x\wedge\bigvee_Jy_j=\bigvee_Jx\wedge y_j$, and the ones that worked well with the dual convolution construction were ones that satisfied the join continuous law: $x\vee\bigwedge_J y_j=\bigwedge_J x\vee y_j$. To work well with the extended convolution construction requires $L$ to be complete and both join and meet continuous. Rich sources of such lattices are the reducts of any complete Boolean algebra, that is, any complete Boolean lattice, and complete chains. 

\begin{prop}
\label{xw}
Let $L$ be a complete lattice that is both meet and join continuous and let $\mathfrak{X}$ be a relational structure of extended type $\tau=(\tau_1,\tau_2)$. Then the operations $(f_i)_I$ of type $\tau_1$ of the extended convolution algebra $L^{\mathfrak{X}*}$ are complete operators, the operations $(g_j)_J$ of type $\tau_2$ are complete dual operators, and the lattice operations of $L^{\mathfrak{X}*}$ are both complete operators and complete dual operators. 
\end{prop}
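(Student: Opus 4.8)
The plan is to read Proposition~\ref{xw} as the amalgamation of the additive statement already proved in Proposition~\ref{operators} with its order-dual, noting that in the extended convolution algebra $L^{\mathfrak{X}*}$ the two families of operations act independently on the common underlying complete lattice $L^X$. First I would record the structural point that all the lattice operations of $L^X$ --- arbitrary joins, arbitrary meets, the bounds --- are computed componentwise, so that every assertion about complete additivity or complete multiplicativity of an operation on $L^{\mathfrak{X}*}$, being an equality between functions $X\to L$, reduces after evaluation at a point $x\in X$ to the corresponding assertion about $L$ and a single relation of $\mathfrak{X}$.

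For the operations $(f_i)_I$ of type $\tau_1$: these are defined via (1) using only the relations $(R_i)_I$, exactly as in the ordinary convolution algebra, and the defining expression is a join of finite meets. The hypothesis that $L$ is meet continuous is precisely the infinite distributive law $a\wedge\bigvee_J b_j=\bigvee_J(a\wedge b_j)$ used in the proof of Proposition~\ref{operators}, so that proof applies verbatim to show each $f_i$ is completely additive in each coordinate, i.e.\ a complete operator. Dually, the operations $(g_j)_J$ of type $\tau_2$ are defined via (2) using only the relations $(S_j)_J$, the defining expression being a meet of finite joins; here the hypothesis that $L$ is join continuous supplies the dual law $a\vee\bigwedge_J b_j=\bigwedge_J(a\vee b_j)$, and the order-dual of the proof of Proposition~\ref{operators} (equivalently, the theorem on dual convolution algebras stated above) shows each $g_j$ is completely multiplicative in each coordinate, i.e.\ a complete dual operator.

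Finally, for the bounded-lattice operations: in any complete lattice binary join is completely additive and binary meet is completely multiplicative, while meet continuity makes binary meet completely additive and join continuity makes binary join completely multiplicative; since these operations are componentwise in $L^X$, the same holds in $L^{\mathfrak{X}*}$, and the nullary bounds are vacuously both operators and dual operators. I do not anticipate a real obstacle: the only care needed is bookkeeping --- matching meet continuity to the $f_i$ and to binary meet, and join continuity to the $g_j$ and to binary join --- and verifying that the defining formula of each operation involves only its own family of relations, so that the additive and multiplicative arguments are genuinely independent.
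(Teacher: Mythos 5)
Your proposal is correct and matches the paper's (implicit) argument: the paper states Proposition~\ref{xw} without proof, precisely because it follows by combining Proposition~\ref{operators} (meet continuity gives that the $f_i$ and the componentwise lattice operations are complete operators) with its order dual for the $g_j$ and join continuity. Your bookkeeping of which continuity hypothesis serves which family of operations, and the observation that the two families are defined independently from their own relations, is exactly what is needed.
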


Equational properties of extended convolution algebras are more delicate to determine. Again, any equation in the negation-free language that is valid in $L^{\mathfrak{X}*}$, where $L$ is non-trivial, is valid in the extended complex algebra $\mathfrak{X}^*$. When considering the converse, it is necessary to have both join continuity and meet continuity of $L$ to ensure that for all extended relational structures $\mathfrak{X}$, that $L^{\mathfrak{X}*}$ satisfies the negation-free equations that hold in $\mathfrak{X}^*$. This is because these conditions are required for the convolution algebra and dual convolution algebra to satisfy all such equations. However, we do not know whether these conditions are sufficient. A useful partial result, somewhat analogous to  Proposition~\ref{spatial}, is given below.

\begin{defn}
Let $L$ be a complete lattice. Then $L$ satisfies the complete distributive law, and is called a completely distributive lattice, if for each set $J$ and each family of indexed families $a_{j,k}$ where $k\in K_j$ for each $j\in J$, 
\vspace{-1ex}

\[ \bigwedge_{j\in J}\,\,\bigvee_{k\in K_j} a_{j,k} \,\, = \,\, \bigvee_{\alpha\in\prod_JK_j}\,\,\bigwedge_{j\in J}a_{j,\alpha(j)}\]
\end{defn}

Examples of completely distributive lattices include any finite distributive lattice, any power set lattice $\mathcal{P}(X)$, and any complete chain. In fact, there is a characterization of completely distributive lattices, but this requires a further definition. 

\begin{defn}
A map $\varphi:L\to M$ between complete lattices is a complete homomorphism if it preserves arbitrary joins and arbitrary meets. We say that $M$ is a complete sublattice of $L$ if $M$ is a subset of $L$ and the identical embedding is a complete homomorphism, and that $M$ is a complete homomorphic image of $L$ if there is a complete homomorphism from $L$ onto $M$. 
\end{defn}

A complete sublattice $R$ of a power set lattice $\mathcal{P}(X)$ is called a complete ring of sets. It is a collection of subsets of $X$ that is closed under arbitrary unions and intersections. Raney \cite{Raney} has given the following characterization of completely distributive lattices. 

\begin{prop}
\label{Raney}
A complete lattice is completely distributive iff it is a complete homomorphic image of a complete ring of sets.  
\end{prop}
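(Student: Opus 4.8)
The plan is to prove both directions of the equivalence. For the easy direction, suppose $L$ is a complete homomorphic image of a complete ring of sets $R \subseteq \mathcal{P}(X)$, via a complete homomorphism $q: R \to L$. A complete ring of sets is completely distributive, since the complete distributive law holds in $\mathcal{P}(X)$ (one checks it directly: an element of $\bigwedge_j \bigvee_k A_{j,k}$ is a point lying in some $A_{j,\alpha(j)}$ for each $j$, which picks out a choice function $\alpha$) and $R$ is closed under the arbitrary joins and meets computed in $\mathcal{P}(X)$, so it inherits the law. Then complete distributivity transfers along the surjective complete homomorphism $q$: given families $a_{j,k} \in L$, lift each to $b_{j,k} \in R$ with $q(b_{j,k}) = a_{j,k}$, apply the complete distributive law in $R$, and push through $q$, using that $q$ preserves arbitrary joins and meets. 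So $L$ is completely distributive.

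For the harder direction, suppose $L$ is completely distributive; I would construct a complete ring of sets mapping onto it. The natural candidate is a representation on $X = L$ itself (or on a suitable subset of $L$). Define $\eta: L \to \mathcal{P}(L)$ by a down-set-style assignment, e.g. $\eta(a) = \{\, b \in L : b \not\geq a\,\}$ or, better adapted to complete distributivity, using the "totally below" (way-below for the completely distributive setting) relation $\lhd$: set $\eta(a) = \{\, b : b \lhd a\,\}$, where $b \lhd a$ means that whenever $a \leq \bigvee S$ then $b \leq s$ for some $s \in S$. The key facts about $\lhd$ in a completely distributive lattice are: (i) $a = \bigvee\{\, b : b \lhd a\,\}$ for every $a$ (this is precisely where complete distributivity is used — it is the standard consequence of the complete distributive law), and (ii) $\lhd$ interpolates and is compatible with joins. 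One then shows that the image of $\eta$, or rather the complete ring of sets $R$ it generates, maps onto $L$ via a complete homomorphism — typically the map $A \mapsto \bigvee A$ restricted appropriately, with $\eta$ furnishing a section on objects. Concretely, one verifies that $a \mapsto \eta(a)$ turns arbitrary meets and joins in $L$ into intersections and unions in $\mathcal{P}(L)$ (using (i) and the complete distributive law), so that the closure of $\eta(L)$ under arbitrary unions and intersections is a complete ring of sets $R$, and that $\bigvee(\cdot): R \to L$ is a well-defined complete homomorphism onto $L$ which is a retraction of $\eta$.

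The main obstacle will be the harder direction, and within it the verification that the "totally below" relation behaves well enough — specifically proving the approximation identity $a = \bigvee\{b : b \lhd a\}$ from the complete distributive law, and the interpolation property $b \lhd a \implies \exists c\, (b \lhd c \lhd a)$, which is what guarantees the generated ring of sets maps homomorphically (not just order-preservingly) onto $L$. Care is also needed to confirm that $R$ is genuinely closed under \emph{arbitrary} intersections and unions and that the collapsing map preserves arbitrary meets as well as joins; preservation of joins is immediate from the construction, while preservation of meets is where one invokes the complete distributive law a second time. I would organize the argument so that these two applications of complete distributivity are isolated as the crux, with the surrounding set-theoretic bookkeeping left as routine.
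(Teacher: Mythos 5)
The paper offers no proof of this proposition; it is quoted directly from Raney's 1952 paper, so there is nothing internal to compare against. Judged on its own terms, your easy direction is correct: a complete ring of sets inherits the complete distributive law from $\mathcal{P}(X)$ because its arbitrary joins and meets are the ambient unions and intersections, and the law passes along a surjective complete homomorphism by lifting the $a_{j,k}$ and pushing the resulting identity forward.

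In the hard direction the overall shape is right but one stated verification is false: the map $\eta(a)=\{b: b\lhd a\}$ does \emph{not} turn arbitrary meets into intersections. In $L=[0,1]$ one has $b\lhd a$ iff $b<a$ (for $b>0$), so with $a_n=\tfrac12+\tfrac1n$ one gets $\bigcap_n\eta(a_n)=[0,\tfrac12]$ while $\eta(\bigwedge_n a_n)=[0,\tfrac12)$. Likewise, interpolation is not what makes the collapsing map a homomorphism. The clean repair is to drop $\eta$ and the totally-below relation entirely: take $R$ to be the complete ring of \emph{all} down-sets of $L$ and $q(A)=\bigvee A$. Then $q$ is onto because $q({\downarrow}a)=a$, it preserves arbitrary unions trivially, and it preserves arbitrary intersections by a single application of the complete distributive law: writing each down-set as $A_j=(a_{j,k})_{k\in K_j}$, every meet $\bigwedge_j a_{j,\alpha(j)}$ lies in each $A_j$ (down-closure), hence in $\bigcap_j A_j$, so $\bigwedge_j\bigvee A_j=\bigvee_\alpha\bigwedge_j a_{j,\alpha(j)}\le\bigvee\bigcap_j A_j$, the reverse inequality being automatic. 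Your construction can be salvaged as stated, since the ring generated by the $\eta(a)$ consists of down-sets and $\bigvee\eta(a)=a$ gives surjectivity via the approximation identity, but then that identity is carrying all the weight, and the claims that $\eta$ preserves arbitrary meets and that interpolation is needed for the homomorphism property should be deleted rather than ``verified.''
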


In conjunction with our earlier categorical results, this provides the following. 

\begin{prop}
\label{quota}
Let $L$ be a non-trivial, complete, completely distributive lattice and let $\mathfrak{X}$ be an extended relational structure. Then $L^{\mathfrak{X}*}$ and $\mathfrak{X}^*$ satisfy exactly the same equations in the negation-free language. 
\end{prop}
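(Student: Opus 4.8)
The plan is to run the same chain of reductions used in the proof of Proposition~\ref{spatial}, inserting Raney's characterization (Proposition~\ref{Raney}) to bridge from complete rings of sets to arbitrary completely distributive lattices. One direction is immediate and does not even use complete distributivity: since $L$ is non-trivial there is an embedding of the two-element lattice $2$ into $L$ that sends the bounds to the bounds, and such a map preserves arbitrary joins and meets, so it is a morphism of $\Lat^*$ from $2$ to $L$. By Theorem~\ref{lop} it induces an embedding $2^{\mathfrak{X}*}\hookrightarrow L^{\mathfrak{X}*}$ in $\Alg_\tau$, and by Proposition~\ref{der} we have $2^{\mathfrak{X}*}\cong\mathfrak{X}^*$. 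Hence every negation-free equation valid in $L^{\mathfrak{X}*}$ is valid in $\mathfrak{X}^*$.

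For the converse, first dispose of the trivial case $X=\emptyset$, in which both $L^{\mathfrak{X}*}$ and $\mathfrak{X}^*$ are one-element algebras and there is nothing to prove, so assume $X\neq\emptyset$. By Proposition~\ref{Raney}, write $L$ as a complete homomorphic image of a complete ring of sets $R$, say via a complete homomorphism $\varphi\colon R\to L$, where $R$ is a complete sublattice of some power set $\mathcal{P}(Z)$. Both $\varphi$ and the inclusion $R\hookrightarrow\mathcal{P}(Z)$ are morphisms of $\Lat^*$, the former onto and the latter one-one. Since $X\neq\emptyset$, Theorem~\ref{lop} gives that $\varphi^{\mathfrak{X}}\colon R^{\mathfrak{X}*}\to L^{\mathfrak{X}*}$ is an onto homomorphism in $\Alg_\tau$ and that $R^{\mathfrak{X}*}$ is (isomorphic to) a subalgebra of $\mathcal{P}(Z)^{\mathfrak{X}*}$.

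It remains to identify $\mathcal{P}(Z)^{\mathfrak{X}*}$. Using the isomorphism $\mathcal{P}(Z)\cong 2^Z=\prod_{z\in Z}2$ together with the preservation of products in the first argument from Theorem~\ref{lop}, we obtain $\mathcal{P}(Z)^{\mathfrak{X}*}\cong (2^Z)^{\mathfrak{X}*}\cong\prod_{z\in Z}2^{\mathfrak{X}*}$, which by Proposition~\ref{der} is isomorphic to a power $(\mathfrak{X}^*)^Z$. Consequently any negation-free equation valid in $\mathfrak{X}^*$ is valid in $(\mathfrak{X}^*)^Z\cong\mathcal{P}(Z)^{\mathfrak{X}*}$, hence in its subalgebra $R^{\mathfrak{X}*}$, hence in its homomorphic image $L^{\mathfrak{X}*}$, since equations are preserved under products, subalgebras, and homomorphic images. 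Combining this with the first direction yields that $L^{\mathfrak{X}*}$ and $\mathfrak{X}^*$ satisfy exactly the same negation-free equations.

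The only real work is bookkeeping: confirming that the extended-type transfer statements needed at each step are precisely what Theorem~\ref{lop} supplies — in particular that $\varphi^{\mathfrak{X}}$ inherits surjectivity (which is why the case $X=\emptyset$ is split off), that the product in $\Lat^*$ is the cartesian one so $(2^Z)^{\mathfrak{X}*}\cong\prod_Z 2^{\mathfrak{X}*}$, and that every morphism invoked respects the full negation-free signature, namely the lattice operations, the bounds, and both families $(f_i)_I$ and $(g_j)_J$ of the extended type. All of these hold because each morphism used preserves arbitrary joins and meets, so the argument goes through without any delicacy beyond this routine verification.
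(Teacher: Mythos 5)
Your proposal is correct and follows essentially the same route as the paper: Raney's characterization realizes $L$ as a complete homomorphic image of a complete ring of sets $R\subseteq\mathcal{P}(Z)$, Theorem~\ref{lop} then exhibits $L^{\mathfrak{X}*}$ as a homomorphic image of a subalgebra of $\prod_Z 2^{\mathfrak{X}*}\cong(\mathfrak{X}^*)^Z$, and the HSP-closure of equational classes finishes the argument. Your extra touches --- spelling out the easy converse direction via the embedding $2\hookrightarrow L$ and splitting off the case $X=\emptyset$ --- are harmless (the latter is in fact unnecessary, since surjectivity of $\varphi^{\mathfrak{X}}$ follows directly from $\varphi^{\mathfrak{X}}(\alpha)=\varphi\circ\alpha$ for any $X$), but they do not change the substance of the proof.
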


\begin{proof}
Apply Proposition~\ref{Raney}. There is a set $Z$, a  complete ring of sets $S$ with the identical embedding $i:S\to\mathcal{P}(Z)$ a complete homomorphism, and a complete homomorphism $\varphi:S\to L$ mapping $S$ onto $L$. By Theorem~\ref{lop}, since $i:S\to\mathcal{P}(Z)$ is a one-one map in $\Lat^*$, there is an embedding of $S^{\mathfrak{X}*}$ into $(\mathcal{P}(Z))^{\mathfrak{X}*}$, and since $\varphi:S\to L$ is an onto map in $\Lat^*$ we have that $L^{\mathfrak{X}*}$ is a homomorphic image of $S^{\mathfrak{X}*}$. Since $\mathcal{P}(Z)$ is isomorphic to $2^Z$ and the convolution functor preserves products in its first argument, we have $(\mathcal{P}(Z))^{\mathfrak{X}*}$ is isomorphic to $\prod_Z2^{\mathfrak{X}*}$. So $L^{\mathfrak{X}*}$ is a homomorphic image of a subalgebra of a product of copies of $2^{\mathfrak{X}*}$, and by Proposition~\ref{der} these copies of $2^{\mathfrak{X}*}$ are isomorphic to the extended convolution algebra $\mathfrak{X}^*$. 
\end{proof}

One would hope to extend this result and obtain an analog of Theorem~\ref{eqns} that applies when $L$ is a complete lattice that is both meet and join continuous. However, there is a problem extending the proof of Theorem~\ref{eqns} to this setting since it involves creating a clone of operations, some of which are finitely supported, and others dually finitely supported, and control of the situation is lost. This remains an open problem that we state below. 

\begin{problem}
\label{jio}
If $L$ is a non-trivial complete lattice that is both join and meet continuous, and $\mathfrak{X}$ is an extended relational structure, do the extended convolution algebra $L^{\mathfrak{X}*}$ and the extended complex algebra $\mathfrak{X}^*$ satisfy the same equations in the negation-free language? Does this hold if $L$ is a non-trivial complete Boolean lattice?
\end{problem}

Another feature can be added to relational structures and the resulting formation of complex algebras, that of a partial ordering. See \cite{Goldblatt} for details. 

\begin{defn}
For $\tau=(\tau_1,\tau_2)$ an extended type, $\mathfrak{X}=(X,\leq,(R_i)_I,(S_j)_J)$ is an ordered, extended relational structure of type $\tau$ if it is an extended relational structure of type $\tau$ with an additional partial ordering on $X$ that satisfies for each $i\in I$ and $j\in J$
\vspace{-1ex}

\begin{align*}
\mbox{if }(x_1,\ldots,x_{n_i},x)\in R_i \mbox{ and }x\leq y &\mbox{ then }(x_1,\ldots,x_{n_i},y)\in R_i\\
\mbox{if }(x_1,\ldots,x_{n_j},x)\in S_j \mbox{ and }y\leq x &\mbox{ then }(x_1,\ldots,x_{n_j},y)\in S_j
\end{align*}
\vspace{-1ex}

\noindent The relations $R_i$ are called up-closed and the relations $S_j$ are called down-closed. 
\end{defn}

Recall that a subset $A$ of a partially ordered set $(X,\leq)$ is an up-set if $x\in A$ and $x\leq y$ implies that $y\in A$, and that a subset $A$ of $(X,\leq)$ is a down-set if $x\in A$ and $y\leq x$ implies that $y\in A$. The following is found in \cite{Goldblatt}, and is not difficult to see directly. 

\begin{prop}
Let $\mathfrak{X}$ be an ordered extended relational structure. Then the collection of upsets of $X$ is a subalgebra $\mathfrak{X}^{\uu}$ of the complex algebra $\mathfrak{X}^*$ of $\mathfrak{X}$ considered as an extended relational structure. We call $\mathfrak{X}^{\uu}$ the up-set complex algebra of $\mathfrak{X}$. 
\end{prop}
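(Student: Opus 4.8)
The plan is to show that the collection of up-sets of $(X,\leq)$ is closed under every operation of the extended complex algebra $\mathfrak{X}^*$. Recall that the signature of $\mathfrak{X}^*$ is the negation-free one: the bounded lattice operations $\wedge,\vee,0,1$ on $\mathcal{P}(X)$, the operators $(f_i)_I$ built from the up-closed relations $(R_i)_I$, and the dual operators $(g_j)_J$ built from the down-closed relations $(S_j)_J$. Since complementation is absent from this signature, there is no immediate obstruction to a family that fails to be closed under complement, such as the up-sets, forming a subalgebra; the entire content is three closure checks plus a look at the nullary cases.

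First I would dispose of the lattice operations: $\emptyset$ and $X$ are up-sets, and an arbitrary union or intersection of up-sets is again an up-set, so the up-sets form a bounded sublattice of $\mathcal{P}(X)$. Next, for the operators, fix $i\in I$ and up-sets $A_1,\ldots,A_{n_i}$; if $x\in f_i(A_1,\ldots,A_{n_i})$ via witnesses $x_1\in A_1,\ldots,x_{n_i}\in A_{n_i}$ with $(x_1,\ldots,x_{n_i},x)\in R_i$, and $x\leq y$, then up-closedness of $R_i$ gives $(x_1,\ldots,x_{n_i},y)\in R_i$, so $y\in f_i(A_1,\ldots,A_{n_i})$. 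For the dual operators, fix $j\in J$ and up-sets $A_1,\ldots,A_{n_j}$; if $x\in g_j(A_1,\ldots,A_{n_j})$ and $x\leq y$, then to test $y$ take any $(x_1,\ldots,x_{n_j},y)\in S_j$, use down-closedness of $S_j$ (here the order runs from $y$ down to $x$, which is precisely the hypothesis imposed on $S_j$) to conclude $(x_1,\ldots,x_{n_j},x)\in S_j$, and then invoke the membership of $x$ to produce some $1\leq k\leq n_j$ with $x_k\in A_k$; hence $y\in g_j(A_1,\ldots,A_{n_j})$.

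The one point deserving separate attention is the nullary operations. When $n_i=0$ the constant $f_i(\,)$ is the subset $\{x:(x)\in R_i\}$, which is an up-set exactly because the hypothesis on $R_i$ in the nullary case says $R_i$ is an up-set; when $n_j=0$ the constant $g_j(\,)$ is the complement of $\{x:(x)\in S_j\}$, which is an up-set because the hypothesis on $S_j$ says $S_j$ is a down-set. Having checked these and the three families above, the up-sets are closed under every operation of $\mathfrak{X}^*$, so they form a subalgebra $\mathfrak{X}^{\uu}$. I do not expect a genuine obstacle here: the argument is pure bookkeeping, and the only place one can slip is in matching the correct order-closure hypothesis (up-closure for the $R_i$ feeding the operators, down-closure for the $S_j$ feeding the dual operators) to the corresponding operation, together with remembering to dispatch the $n_i=0$ and $n_j=0$ cases.
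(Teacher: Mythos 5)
Your proof is correct, and it is exactly the direct verification the paper has in mind: the paper itself offers no proof here (it cites Goldblatt and calls the claim easy to see directly), and your up-closure/down-closure bookkeeping mirrors precisely the argument the paper does write out for the convolution-algebra analogue in the very next proposition. Your separate treatment of the nullary cases, including the observation that $g_j(\,)$ is the complement of the down-set $S_j$ and hence an up-set, is a worthwhile detail that the paper's parenthetical remark in Proposition~\ref{ccc} only hints at.
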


Recall that for a complete lattice $L$ and poset $(X,\leq)$, the collection of order preserving functions $\alpha:X\to L$ forms a bounded sublattice of the product $L^X$ that is closed under arbitrary joins and meets. 

\begin{prop}
For $L$ a complete lattice and $\mathfrak{X}$ an ordered extended relational structure, the set $L^{\mathfrak{X}^{\uu}}$ of order preserving functions from $X$ to $L$ is a subalgebra of the extended convolution algebra $L^\mathfrak{X}$. We call $L^{\mathfrak{X}^{\uu}}$ the ordered extended convolution of $\mathfrak{X}$ over $L$. 
\end{prop}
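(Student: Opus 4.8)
The plan is to check that the set of order-preserving functions $X\to L$ is closed under every operation of the extended convolution algebra $L^{\mathfrak{X}*}$: the bounded lattice operations $\wedge,\vee,0,1$, the operators $f_i$ attached to the up-closed relations $R_i$, and the dual operators $g_j$ attached to the down-closed relations $S_j$. Closure under the lattice operations is exactly what was recalled just before the statement: since those operations are computed coordinatewise, meets, joins, and the constant functions $0,1$ of order-preserving functions remain order-preserving. So the entire content of the proposition is the behaviour of the $f_i$ and the $g_j$.

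For an operator $f_i$, fix $\alpha_1,\ldots,\alpha_{n_i}\in L^X$ (order-preserving, though this will not actually be needed) and elements $x\le y$ of $X$. The key point is that up-closedness of $R_i$ gives the inclusion $\{(x_1,\ldots,x_{n_i}):(x_1,\ldots,x_{n_i},x)\in R_i\}\subseteq\{(x_1,\ldots,x_{n_i}):(x_1,\ldots,x_{n_i},y)\in R_i\}$. Hence the join defining $f_i(\alpha_1,\ldots,\alpha_{n_i})(x)$ runs over a subfamily of the terms defining $f_i(\alpha_1,\ldots,\alpha_{n_i})(y)$, so $f_i(\alpha_1,\ldots,\alpha_{n_i})(x)\le f_i(\alpha_1,\ldots,\alpha_{n_i})(y)$; that is, $f_i(\alpha_1,\ldots,\alpha_{n_i})$ is order-preserving. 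The degenerate case $n_i=0$, in which $R_i$ is itself an up-set of $X$, is the same statement about predecessor sets.

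For a dual operator $g_j$ the argument is dual. Down-closedness of $S_j$ shows that if $x\le y$ then $\{(x_1,\ldots,x_{n_j}):(x_1,\ldots,x_{n_j},y)\in S_j\}\subseteq\{(x_1,\ldots,x_{n_j}):(x_1,\ldots,x_{n_j},x)\in S_j\}$, so the meet defining $g_j(\alpha_1,\ldots,\alpha_{n_j})(y)$ runs over a subfamily of the terms defining $g_j(\alpha_1,\ldots,\alpha_{n_j})(x)$; a meet over fewer terms is larger, so $g_j(\alpha_1,\ldots,\alpha_{n_j})(x)\le g_j(\alpha_1,\ldots,\alpha_{n_j})(y)$, again with $n_j=0$ subsumed. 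Combined with closure under the lattice operations, this shows the order-preserving functions form a subalgebra $L^{\mathfrak{X}^{\uu}}$ of $L^{\mathfrak{X}*}$.

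There is no real obstacle here; the proposition is essentially bookkeeping. The one point that needs mild care is to keep straight that the up-closure condition on the $R_i$ is precisely what makes predecessor sets grow as the last coordinate grows, so that $f_i$ of any tuple is order-preserving, while the down-closure condition on the $S_j$ makes them shrink, which is exactly what is needed for the meet-based operations $g_j$; and to notice that the nullary cases require no separate argument.
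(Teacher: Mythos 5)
Your proposal is correct and follows essentially the same argument as the paper: up-closedness of $R_i$ makes the join defining $f_i(\vec{\alpha})(y)$ range over a superset of the terms for $f_i(\vec{\alpha})(x)$, and dually down-closedness of $S_j$ makes the meet for $g_j(\vec{\alpha})(x)$ range over a superset of the terms for $g_j(\vec{\alpha})(y)$, with closure under the lattice operations coming from the coordinatewise description recalled just before the statement.
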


\begin{proof}
Suppose that $i\in I$ and $\alpha_1,\ldots,\alpha_{n_i}\in L^X$. If $x,y\in X$ with $x\leq y$, then 
\vspace{-1ex}

\begin{align*}
f_i(\alpha_1,\ldots,\alpha_{n_i})(x)& \,\,=\,\, \bigvee\{\alpha_1(x_1)\wedge\cdots\wedge\alpha_{n_i}(x_{n_i}):(x_1,\ldots,x_{n_i},x)\in R_i\} 
\end{align*}
\vspace{-1ex}

\noindent Since $R_i$ is up-closed, if $(x_1,\ldots,x_{n_i},x)\in R_i$, then $(x_1,\ldots,x_{n_i},y)\in R_i$. Since $f(\alpha_1,\ldots,\alpha_{n_i})(y)$ is a join of a larger set, it follows that $f_i(\alpha_1,\ldots,\alpha_{n_i})(x)\leq f_i(\alpha_1,\ldots,\alpha_{n_i})(y)$, showing that $f_i(\alpha_1,\ldots,\alpha_{n_i})$ is order preserving. 

Let $j\in J$ and $\alpha_1,\ldots,\alpha_{n_j}\in L^X$. For $x,y\in X$ with $x\leq y$ 
\vspace{-1ex}

\begin{align*}
g_j(\alpha_1,\ldots,\alpha_{n_j})(y)& \,\,=\,\, \bigwedge\{\alpha_1(x_1)\vee\cdots\vee\alpha_{n_j}(x_{n_j}):(x_1,\ldots,x_{n_j},y)\in S_j\} 
\end{align*}
\vspace{-1ex}

\noindent Since $S_j$ is down-closed, if $(x_1,\ldots,x_{n_i},y)\in S_j$, then $(x_1,\ldots,x_{n_i},x)\in S_j$. Since $g_j(\alpha_1,\ldots,\alpha_{n_j})(x)$ is the meet of a larger set, it follows that  $g_j(\alpha_1,\ldots,\alpha_{n_j})(x)\leq g_j(\alpha_1,\ldots,\alpha_{n_j})(y)$, showing that $g_j(\alpha_1,\ldots,\alpha_{n_j})$ is order preserving. 
\end{proof}

Examining this proof shows somewhat more. The images under the operations $f_i$ and $g_j$ of any functions in $L^X$ are order preserving. We next have the expected correspondence between ordered convolution algebras and ordered complex algebras. 

\begin{prop}
For an ordered extended relational structure $\mathfrak{X}$, the ordered extended complex algebra $\mathfrak{X}^{\uu}$ is isomorphic to the ordered extended convolution algebra $2^{\mathfrak{X}^{\uu}}$.
\end{prop}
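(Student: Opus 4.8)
The plan is to reuse the Boolean algebra isomorphism $\phi\colon 2^X\to\mathcal P(X)$ given by $\phi(\alpha)=\{x:\alpha(x)=1\}$, which by Proposition~\ref{der} is already an isomorphism from the extended convolution algebra $2^{\mathfrak X*}$ onto the extended complex algebra $\mathfrak X^*$, and to restrict it to the relevant subalgebras. Recall that $2^{\mathfrak X^{\uu}}$ is by definition the subalgebra of $2^{\mathfrak X*}$ carried by the order preserving maps $\alpha\colon X\to 2$, and that $\mathfrak X^{\uu}$ is the subalgebra of $\mathfrak X^*$ carried by the up-sets of $X$; both facts are the content of the two preceding propositions, so in each case the operations of the subalgebra are genuinely the restrictions of those of the ambient algebra.

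The one new point I would check is that $\phi$ carries the underlying set of $2^{\mathfrak X^{\uu}}$ bijectively onto the underlying set of $\mathfrak X^{\uu}$. For a map $\alpha\colon X\to 2$, being order preserving means precisely that whenever $x\le y$ and $\alpha(x)=1$ we also have $\alpha(y)=1$; since $2=\{0,1\}$ is the two-element chain with $0<1$, this says exactly that $\phi(\alpha)=\{x:\alpha(x)=1\}$ is an up-set of $(X,\le)$. Conversely, every up-set $A$ equals $\phi(\chi_A)$ for its characteristic function $\chi_A$, which is order preserving. Hence $\phi$ restricts to a bijection between the carrier of $2^{\mathfrak X^{\uu}}$ and the carrier of $\mathfrak X^{\uu}$.

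Finally, since $\phi\colon 2^{\mathfrak X*}\to\mathfrak X^*$ is an isomorphism of algebras (in the signature consisting of the bounded lattice operations together with the families $(f_i)_I$ and $(g_j)_J$), and it maps the carrier of the subalgebra $2^{\mathfrak X^{\uu}}$ onto the carrier of the subalgebra $\mathfrak X^{\uu}$, its restriction is a bijective homomorphism between these subalgebras whose inverse is again a homomorphism. Thus $\phi$ restricts to the desired isomorphism $2^{\mathfrak X^{\uu}}\to\mathfrak X^{\uu}$. There is no real obstacle here: the only step requiring any thought is the order-preserving-map/up-set correspondence above, and that is immediate once one observes that $2$ is a two-element chain, so everything reduces to quoting Proposition~\ref{der} and restricting.
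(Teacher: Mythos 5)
Your proposal is correct and follows essentially the same route as the paper: restrict the isomorphism $\phi(\alpha)=\{x:\alpha(x)=1\}$ of Proposition~\ref{der} to the subalgebras, observing that order preserving maps into the two-element chain correspond exactly to up-sets. The paper states this in one line; your write-up just spells out the same verification in more detail.
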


\begin{proof}
We know there is an isomorphism $\phi:2^{\mathfrak{X}*}\to\mathfrak{X}^*$ where $\phi(\alpha)=\{x:\alpha(x)=1\}$. We need only note that $\phi$ is a bijection between the subalgebras $2^{\mathfrak{X}^{\uu}}$ of order preserving functions and $\mathfrak{X}^{\uu}$ of up-sets. 
\end{proof}

Since joins and meets in the complete lattice of order-preserving functions from $X$ to $L$ agree with joins and meets in the complete lattice $L^X$, we have the following from Proposition~\ref{xw}. 

\begin{prop}
For $L$ a complete lattice that is both meet and join continuous and $\mathfrak{X}$ an ordered relational structure of extended type $\tau=(\tau_1,\tau_2)$, the operations $(f_i)_I$ of type $\tau_1$ of the ordered extended convolution algebra $L^{\mathfrak{X}^{\uu}}$ are complete operators, the operations $(g_j)_J)$ of type $\tau_2$ are complete dual operators, and the lattice operations are both complete operators and complete dual operators. 
\end{prop}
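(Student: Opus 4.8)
The plan is to obtain this directly from Proposition~\ref{xw} by transporting its conclusions along the inclusion $L^{\mathfrak{X}^{\uu}}\hookrightarrow L^{\mathfrak{X}*}$. Write $A=L^{\mathfrak{X}*}$ for the extended convolution algebra on $L^X$ and $B=L^{\mathfrak{X}^{\uu}}$ for its subalgebra of order-preserving functions. By the preceding proposition, together with the observation following it, the operations $f_i$ and $g_j$ of $A$ send every tuple from $L^X$ — in particular every tuple from $B$ — to an order-preserving function, so they restrict to operations on $B$; the lattice operations obviously restrict as well. The one fact I would isolate is the one recorded just before that proposition: the order-preserving functions form a sublattice of $L^X$ closed under \emph{arbitrary} joins and meets. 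Consequently, for any family of members of $B$, its join and its meet computed in $B$ are literally the same elements of $L^X$ as the join and meet computed in $A$.

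Granting this, I would argue coordinatewise. Fix $i\in I$ and a coordinate $k$ (the nullary case being vacuous). Given a family $(\beta_m)_M$ in $B$ and order-preserving functions filling the remaining coordinates, the join $\bigvee_M\beta_m$ is the same in $B$ as in $A$; since $f_i$ is a complete operator on $A$ by Proposition~\ref{xw}, we get $f_i(\dots,\bigvee_M\beta_m,\dots)=\bigvee_M f_i(\dots,\beta_m,\dots)$ in $A$; and because each $f_i(\dots,\beta_m,\dots)$ again lies in $B$, the right-hand join is once more the same in $B$ as in $A$. Hence the identity holds in $B$, so each $f_i$ is a complete operator on $B$. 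The case of the $g_j$ is word-for-word dual — replace joins by meets throughout and cite the complete-dual-operator clause of Proposition~\ref{xw} — and the lattice operations $\wedge,\vee$ are handled the same way, being simultaneously complete operators and complete dual operators in $A$.

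I do not expect a genuine obstacle. If any point deserves care it is the passage between suprema and infima taken in $B$ versus in $A$: this is exactly where the hypothesis that the order-preserving functions are closed under arbitrary joins and meets is used, and it is what makes "complete" survive the restriction — the finitary additivity and multiplicativity statements would follow even without it, but the infinitary ones would not be immediate.
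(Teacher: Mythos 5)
Your proposal is correct and matches the paper's own justification, which is exactly the one-line observation that joins and meets of order-preserving functions, being computed pointwise, agree with those in $L^X$, so the complete (dual) operator identities of Proposition~\ref{xw} restrict verbatim to the subalgebra $L^{\mathfrak{X}^{\uu}}$. You have simply spelled out the transport argument in more detail than the paper does.
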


For the matter of functoriality, the definition of $p$-morphisms must be restricted. We say that a function $p:\mathfrak{X}\to\mathfrak{Y}$ between ordered extended relational structures is an order $p$-morphism if $p$ is order preserving and it is a $p$-morphism from $\mathfrak{X}$ to $\mathfrak{Y}$ considered as extended relational structures. For an extended type $\tau$ we let $\Rel_\tau^{\uu}$ be the category of ordered extended relational structures of type $\tau$ and the order $p$-morphisms between them. 

\begin{thm}
For an extended type $\tau$ there is a bifunctor $\Conv^{\uu}(\,\cdot\,,\,\cdot\,):\Lat^*\times\Rel_\tau^{\uu}\to\Alg_\tau$  taking a complete lattice $L$ and ordered extended relational structure $\mathfrak{X}$ to the ordered convolution $L^{\mathfrak{X}^{\uu}}$. 
\end{thm}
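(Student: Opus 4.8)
The plan is to realize $\Conv^{\uu}$ as a restriction of the bifunctor $\Conv(\,\cdot\,,\,\cdot\,):\Lat^*\times\Rel_\tau\to\Alg_\tau$ from Theorem~\ref{lop}, using the fact recorded in the two preceding propositions (and the remark between them) that for any complete lattice $L$ and ordered extended relational structure $\mathfrak{X}$ the set $L^{\mathfrak{X}^{\uu}}$ of order preserving functions is a subalgebra of $L^{\mathfrak{X}*}$; here closure is automatic since the operations $f_i$ and $g_j$ send every tuple of functions in $L^X$ to an order preserving function, and the constant functions and all meets and joins of order preserving functions are order preserving. Thus on objects we set $\Conv^{\uu}(L,\mathfrak{X})=L^{\mathfrak{X}^{\uu}}$, which is an object of $\Alg_\tau$. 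As with Theorem~\ref{lop}, the resulting bifunctor will be covariant in its first argument and contravariant in its second.

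First I would treat the first, covariant, argument. A morphism $\phi:L\to M$ in $\Lat^*$ preserves arbitrary joins, hence is order preserving, so if $\alpha:X\to L$ is order preserving then $\phi\circ\alpha:X\to M$ is order preserving. Therefore the homomorphism $\phi^{\mathfrak{X}*}:L^{\mathfrak{X}*}\to M^{\mathfrak{X}*}$ of Theorem~\ref{lop} carries the subalgebra $L^{\mathfrak{X}^{\uu}}$ into the subalgebra $M^{\mathfrak{X}^{\uu}}$, and its restriction $\phi^{\mathfrak{X}^{\uu}}$ is a homomorphism $L^{\mathfrak{X}^{\uu}}\to M^{\mathfrak{X}^{\uu}}$. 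Next I would treat the second, contravariant, argument. If $p:\mathfrak{X}\to\mathfrak{Y}$ is an order $p$-morphism then $p:X\to Y$ is by definition order preserving, so $\beta\circ p:X\to L$ is order preserving whenever $\beta:Y\to L$ is; hence the homomorphism $p^L:L^{\mathfrak{Y}*}\to L^{\mathfrak{X}*}$ of Theorem~\ref{lop} carries $L^{\mathfrak{Y}^{\uu}}$ into $L^{\mathfrak{X}^{\uu}}$, and its restriction $p^L$ is the required homomorphism $L^{\mathfrak{Y}^{\uu}}\to L^{\mathfrak{X}^{\uu}}$.

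Finally I would check the functor axioms, all of which descend from Theorem~\ref{lop}: restrictions of identity maps are identities; composites are preserved (covariantly in the first argument, contravariantly in the second) because they are already preserved at the level of the ambient algebras $L^{\mathfrak{X}*}$ before restriction; and the square expressing bifunctoriality commutes on $L^{\mathfrak{X}^{\uu}}$ because the corresponding square of maps between the ambient algebras commutes. I do not anticipate any real obstacle: the single point that genuinely needs verifying is that the two families of structure maps restrict to the order preserving subalgebras, and that follows at once from the observation that a composite of order preserving maps is order preserving. The theorem is thus essentially bookkeeping on top of Theorem~\ref{lop}, and I would keep the write-up correspondingly brief.
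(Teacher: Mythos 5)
Your proposal is correct and follows essentially the same route as the paper: the paper likewise obtains $\Conv^{\uu}$ by restricting the bifunctor of Theorem~\ref{lop} to the subalgebras of order preserving functions, with the key observation being that $\phi^{\mathfrak{X}}(\alpha)=\phi\circ\alpha$ and $p^L(\beta)=\beta\circ p$ are composites of order preserving maps. Your write-up just spells out the bookkeeping that the paper leaves implicit.
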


\begin{proof}
This follows from Theorem~\ref{lop} once we notice for a morphism $\phi:L\to M$ in $\Lat^*$ and an order preserving $p$-morphism $p:\mathfrak{X}\to\mathfrak{Y}$, that $\phi^\mathfrak{X}$ maps an order preserving function $\alpha:X\to L$ to an order preserving function $\phi^\mathfrak{X}(\alpha):X\to M$, and that $p^L$ maps an order preserving function $\beta:Y\to L$ to an order preserving function $p^L(\beta):X\to L$. This is because $\phi^\mathfrak{X}(\alpha)=\phi\circ\alpha$ and $p^L(\beta)=\beta\circ p$ are composites of order preserving functions. 
\end{proof}

\begin{rmk}{\em 
There are further properties of this bifunctor. In the first argument, it preserves products and preserves and reflects one-one and onto maps. To show that it preserves onto maps, for $\beta:X\to M$ order preserving consider $\alpha(x)=\bigvee\{a:\phi(a)\leq\beta(x)\}$. In the second argument, it takes coproducts to products. It is not the case that it takes one-one maps to onto maps. Consider $p$ mapping the a 2-element antichain $\mathfrak{X}$ to a 2-element chain $\mathfrak{Y}$ and the induced map $p^2:2^\mathfrak{Y}\to 2^\mathfrak{X}$. This cannot be onto since there are 3 order-preserving maps from $Y$ to $2$ and 4 order-preserving maps from $X$ to 2. It does take an order-embedding $p:\mathfrak{X}\to\mathfrak{Y}$ to an onto map; for $\alpha:X\to L$ order preserving consider $\beta(y)=\bigvee\{\alpha(x):p(x)\leq y\}$. In the second argument, onto maps are taken to one-one maps. 
}
\end{rmk}

There are two directions for further generalization. We will not develop these here, but will leave them as problems for further study. Following \cite{Goldblatt}, relational structures can be equipped also with topological structure, primarily that of Priestley spaces. For such an ordered topological extended relational structure $\mathfrak{X}$, the order-topological version of its complex algebra consists of its clopen up-sets. These clopen up-sets correspond to continuous order preserving maps from $X$ to the 2-element lattice $2$ with the discrete topology. In extending this to convolution algebras $L^\mathfrak{X}$ there are many options featuring topological structure on $L$. 

\begin{problem}
Develop an order-topological version of convolution algebras $L^\mathfrak{X}$. 
\end{problem}

A second direction involves the presence of further structure on $L$. Our results are best behaved when $L$ is the lattice reduct of a complete Heyting algebra. In this case $L$ carries a natural Heyting implication $\to$ and negation $\neg$. These lift to the full convolution algebra $L^\mathfrak{X}$ by taking the coordinatewise operations in the power $L^X$. When applied to the convolution algebra $2^\mathfrak{X}$ realizing the complex algebra, this negation is the Boolean negation that plays an important role in many of the more interesting aspects of correspondence theory. It would be of interest if portions of the correspondence theory can be developed using a Heyting implication and negation, at least in simple cases such as convolution algebras over linear algebras, or other well understood settings. 

\begin{problem}
Incorporate Heyting negation and implication into a type of correspondence theory for convolution algebras. 
\end{problem}

\section{Examples}

Here we discuss several examples  placing convolution algebras in the context of  various algebraic structures considered in extensions of classic logic. These include Heyting versions of monadic algebras and relation algebras, and the truth value algebra from type-2 fuzzy sets.  In discussing these examples, we consider more specific equations that  are preserved when forming convolution algebras. Due to the specific nature of these equations, we obtain some results that are outside the scope of the more general results on preservations of equations given before. There is surely much more to be done in this direction, but the following points to some paths. 

\begin{defn}
A unary operation $\Diamond$ on a bounded lattice $L$ is a closure operator if it is order preserving and satisfies (i) $a\leq\Diamond a$ and (ii) $\Diamond\Diamond a =\Diamond a$. A closure operator is finitely additive if it additionally satisfies  (iii) $\Diamond 0 = 0$ and (iv) $\Diamond (a\vee b)=\Diamond a \vee \Diamond b$. A unary operation $\Box$ on $L$ is an interior operator if it is order preserving and satisfies (i) $\Box a\leq a$ and (ii) $\Box\Box a=\Box a$. An interior operator is finitely multiplicative if it additionally satisfies (iii) $\Box 1 = 1$ and (iv) $\Box(a\wedge b)=\Box a\wedge \Box b$. 
\end{defn}

\begin{prop}
\label{closure}
Let $L$ be a complete distributive lattice and $\mathfrak{X}=(X,R)$ be a relational structure with a binary relation $R$.  Then  for $f$ the additional unary operation of $L^\mathfrak{X}$ we have
\vspace{1ex}

\begin{enumerate}
\item  $f(a\vee b)=f(a)\vee f(b)$ and $f(0)=0$ holds in $L^\mathfrak{X}$, so $f$ is an operator
\item  $a\leq f(a)$ holds in $L^\mathfrak{X}$ iff $R$ is reflexive
\item $f(f(a))\leq f(a)$ holds in $L^\mathfrak{X}$ iff $R$ is transitive
\end{enumerate}
\vspace{1ex}
\noindent Thus $f$ is a finitely additive closure operator  iff  $R$ is reflexive and transitive. Dually, the operation $g$ of $L^{\mathfrak{X}-}$ is a finitely multiplicative interior operator iff $R$ is reflexive and transitive.  
\end{prop}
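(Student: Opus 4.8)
The plan is to establish the three numbered items in turn, read off the ``finitely additive closure operator'' assertion from their conjunction, and then obtain the dual statement by passing to the order-dual lattice. Item~(1) requires nothing beyond Proposition~\ref{operators}: since $L$ is a complete distributive lattice, the unary operation $f$ of $L^\mathfrak{X}$ coming from the binary relation $R$ is an operator, and for a unary operation this is exactly finite additivity in the single argument, i.e.\ $f(a\vee b)=f(a)\vee f(b)$ (additivity applied to a two-element family) together with $f(0)=0$ (additivity applied to the empty family). Being additive, $f$ is automatically order preserving.

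For items~(2) and (3) the two directions are of different character. The sufficiency directions are direct unravellings of $f(\alpha)(x)=\bigvee\{\alpha(x_1):(x_1,x)\in R\}$. If $R$ is reflexive, then for every $\alpha\in L^X$ and $x\in X$ the value $\alpha(x)$ is one of the joinands of $f(\alpha)(x)$, so $\alpha(x)\le f(\alpha)(x)$ and $a\le f(a)$ holds in $L^\mathfrak{X}$. If $R$ is transitive, then
\[
f(f(\alpha))(x)\;=\;\bigvee\bigl\{\,\alpha(x_2):\exists x_1\ (x_2,x_1)\in R\ \text{and}\ (x_1,x)\in R\,\bigr\},
\]
and transitivity forces $(x_2,x)\in R$ for each such $x_2$, so every joinand of $f(f(\alpha))(x)$ already occurs in $f(\alpha)(x)$, whence $f(f(a))\le f(a)$ in $L^\mathfrak{X}$. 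For the necessity directions I would argue by contraposition with a characteristic-type test function: assuming $L$ nontrivial, for $y\in X$ let $\delta_y\in L^X$ take the value $1$ at $y$ and $0$ elsewhere. If $(y,y)\notin R$, then $f(\delta_y)(y)$ is a join over points distinct from $y$, hence equals $0$, so $\delta_y(y)=1\not\le f(\delta_y)(y)$; thus $a\le f(a)$ in $L^\mathfrak{X}$ implies $R$ reflexive. Likewise, if there are $(z,y),(y,x)\in R$ with $(z,x)\notin R$, then $f(\delta_z)(y)=1$, which forces $f(f(\delta_z))(x)=1$, while $f(\delta_z)(x)=0$, contradicting $f(f(a))\le f(a)$; hence that inequality implies transitivity. (One could instead transfer the inequalities down to $2^\mathfrak{X}\cong\mathfrak{X}^+$ via Propositions~\ref{pi} and \ref{curr} and quote the classical correspondence, but this route still uses nontriviality of $L$ and concerns $\mathfrak{X}^+$ rather than $L^\mathfrak{X}$, so the direct argument is preferable.)

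Assembling the closure-operator claim is then bookkeeping against the definition. By (1), $f$ is order preserving and satisfies $f(0)=0$ and $f(a\vee b)=f(a)\vee f(b)$. By (2), $a\le f(a)$ is equivalent to reflexivity of $R$; and, once $a\le f(a)$ holds, applying it with $a$ replaced by $f(a)$ gives $f(a)\le f(f(a))$, so idempotence $f(f(a))=f(a)$ reduces to $f(f(a))\le f(a)$, which by (3) is equivalent to transitivity of $R$. Hence $f$ is a finitely additive closure operator iff $R$ is reflexive and transitive. For the dual assertion, observe that the operation $g$ of $L^{\mathfrak{X}-}$, namely $g(\alpha)(x)=\bigwedge\{\alpha(x_1):(x_1,x)\in R\}$, is precisely the operation $f$ of the convolution algebra $(L^{\mathrm{op}})^\mathfrak{X}$, since $L^{\mathrm{op}}$ is again a complete distributive lattice whose joins are the meets of $L$; and a finitely multiplicative interior operator on $L$ is exactly a finitely additive closure operator on $L^{\mathrm{op}}$. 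So the dual statement is the one already proved, applied to $L^{\mathrm{op}}$.

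I do not expect a genuine obstacle: every ingredient is either cited (Proposition~\ref{operators}) or a one-line computation with the definition of $f$. The two points needing care are the nontriviality of $L$ in the necessity directions of (2) and (3) — without it the ``iff''s fail, since a one-element $L$ validates every inequality for any $R$ — and the nullary edge cases, where the relevant sets of $R$-predecessors are empty and the corresponding joins are empty and equal $0$, which is exactly what makes the contraposition arguments succeed.
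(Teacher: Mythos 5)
Your proposal is correct and follows essentially the same route as the paper: item (1) from Proposition~\ref{operators}, characteristic-type test functions for the necessity directions of (2) and (3), direct unravelling of the definition of $f$ for the sufficiency directions, and order-duality of $L$ for the interior-operator statement. Your explicit remark that the ``only if'' directions need $L$ nontrivial is a fair observation that the paper's proof leaves implicit.
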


\begin{proof}
The first statement is given by Proposition~\ref{operators}. For the forward direction of the second statement, suppose that $x\in X$. Consider the function $\alpha=\chi_{\{x\}}$ that takes value 1 at $x$ and $0$ elsewhere. Then since $\alpha(x)\leq f(\alpha)(x)$ we have that $1=\bigvee\{\alpha(y):(y,x)\in R\}$. Thus $(x,x)\in R$, showing that $R$ is reflexive. For the converse, suppose that $R$ is reflexive. Then for any function $\alpha\in L^X$ we have $\alpha(x)\leq\bigvee\{\alpha(y):(y,x)\in R\}$ since $(x,x)\in R$. Thus $\alpha\leq f(\alpha)$. For the forward direction of the third statement, let $x,y,z\in X$ with $(x,y),(y,z)\in R$. Let $\alpha$ again be the function taking value 1 at $x$ and 0 elsewhere. It is easy to see that $f(f(\alpha))(z)=1$. Since $f(f(\alpha))\leq f(\alpha)$, it follows that $f(\alpha)(z)=1$, hence $(x,z)\in R$. So $R$ is transitive. Conversely, if $R$ is transitive, then for any $\alpha\in L^X$ and any $x\in X$
\vspace{-1ex}

\[f(f(\alpha))(z)\,\,=\,\,\bigvee\{f(\alpha)(y):(y,z)\in R\}\,\,=\,\,\bigvee\{\bigvee\{\alpha(x):(x,y),(y,z)\in R\}\]
\vspace{-1ex}

\noindent Thus since $R$ is transitive, $f(f(\alpha))(z)\leq f(\alpha)(z)$, showing that $f(f(\alpha))\leq f(\alpha)$. 

This establishes that the operation $f$ of $L^\mathfrak{X}$ is a finitely additive closure operator iff $R$ is reflexive and transitive. That the operation $g$ of $L^{\mathfrak{X}-}$ is a finitely \mbox{multiplicative} interior operator iff $R$ is reflexive and transitive follows by duality. To see this, let $L^d$ be the order dual of the lattice $L$. Then $g$ is the operation on $L^X$ formed by taking the operation of $(L^d)^\mathfrak{X}$. The operation $g$ is a finitely multiplicative interior operator on $L^X$ iff it is a finitely additive closure operator on $(L^X)^d=(L^d)^X$, and this occurs iff $R$ is reflexive and transitive.
\end{proof}

We next consider the first of our specific instances, that of monadic Heyting algebras \cite{Montiero}. 

\begin{defn}
\label{monadic}
A monadic Heyting algebra is a Heyting algebra $H$ with a finitely additive closure operation $\Diamond$ and a finitely multiplicative interior operation $\Box$ that satisfy 
\vspace{1ex}

\begin{enumerate}
\item $\Box\Diamond a = \Diamond a$
\item $\Diamond\Box a = \Box a$
\item $\Diamond (\Diamond a\wedge b)=\Diamond a\wedge\Diamond b$
\end{enumerate}
\end{defn}

Monteiro and Varsavsky \cite{Montiero} introduced functional monadic Heyting algebras. These were ones constructed as follows: for a complete Heyting algebra $L$ and set $X$, define operations $\Diamond$ and $\Box$ on $L^X$ by setting 
\vspace{-1ex}

\[ \Diamond(\alpha)(x)\,\,=\,\,\bigvee\{\alpha(y):y\in X\}\quad\mbox{ and }\quad\Box(\alpha)(x)\,\,=\,\,\bigwedge\{\alpha(y):y\in X\}\]
\vspace{-1ex}

\noindent  They showed that with these operations and the natural Heyting algebra structure that $L^X$ is a monadic Heyting algebra. The following is obvious from the definitions. 

\begin{prop}
Let $L$ be a complete Heyting algebra and $X$ be a set. Let $\mathfrak{X}=(X,\nabla_X,\nabla_X)$ be the relational structure of extended type $\tau=(1,1)$ where $\nabla_X$ is the relation $X\times X$. Then Monteiro and Varsavsky's functional monadic Heyting algebra is the extended convolution algebra $L^{\mathfrak{X}*}$.
\end{prop}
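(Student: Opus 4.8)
The plan is simply to unwind the definition of the extended convolution algebra $L^{\mathfrak{X}*}$ in the case at hand and to observe that the resulting operations coincide, on the nose, with the operations $\Diamond$ and $\Box$ of Monteiro and Varsavsky.

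First I would note that $\mathfrak{X}=(X,\nabla_X,\nabla_X)$ has extended type $\tau=(1,1)$: the first copy of $\nabla_X$ is a binary relation contributing one unary operator $f$ to $L^{\mathfrak{X}*}$, and the second copy is a binary relation contributing one unary dual operator $g$. By the definition of $L^{\mathfrak{X}*}$, for $\alpha\in L^X$ and $x\in X$ we have
$$ f(\alpha)(x)\,\,=\,\,\bigvee\{\alpha(x_1):(x_1,x)\in\nabla_X\}\qquad\mbox{and}\qquad g(\alpha)(x)\,\,=\,\,\bigwedge\{\alpha(x_1):(x_1,x)\in\nabla_X\}. $$
Since $\nabla_X=X\times X$, the condition $(x_1,x)\in\nabla_X$ is satisfied by every $x_1\in X$, regardless of $x$. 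Hence $f(\alpha)(x)=\bigvee\{\alpha(y):y\in X\}$ and $g(\alpha)(x)=\bigwedge\{\alpha(y):y\in X\}$, which are precisely the operations $\Diamond(\alpha)(x)$ and $\Box(\alpha)(x)$ appearing in the definition of the functional monadic Heyting algebra.

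Second, I would observe that the remaining structure matches. The underlying set is $L^X$ in both cases. The bounded lattice operations of $L^{\mathfrak{X}*}$ are, by construction, the componentwise operations of the power $L^X$, and---since $L$ is a complete Heyting algebra---the Heyting implication lifts componentwise to $L^X$ as well, as noted in the remark following the definition of the convolution algebra. This componentwise Heyting structure on $L^X$ is exactly the natural Heyting structure used by Monteiro and Varsavsky. Therefore $L^{\mathfrak{X}*}$ and the functional monadic Heyting algebra are literally the same algebra.

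I do not expect any real obstacle here: the statement is a direct translation of definitions, the only two points worth spelling out being that taking $\nabla_X=X\times X$ forces the join and meet in the convolution formulas to range over all of $X$, and that the componentwise Heyting structure on $L^X$ is precisely the one appearing in the original construction. (That this algebra actually satisfies the monadic Heyting axioms of Definition~\ref{monadic} is Monteiro and Varsavsky's theorem, which we invoke rather than reprove.)
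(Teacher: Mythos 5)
Your proof is correct and is exactly the verification the paper has in mind: the paper simply asserts the proposition is ``obvious from the definitions,'' and your unwinding of the convolution formulas for $\nabla_X=X\times X$ together with the observation that the componentwise Heyting structure matches is precisely that check.
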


\begin{rmk}{\em 
Let us consider the fact that this $L^{\mathfrak{X}*}$ satisfies the axions for monadic Heyting algebras in the context of the results on preservation of equations we have given. That $\Diamond$ is a finitely additive closure operator and $\Box$ is a finitely multiplicative interior operator are given by Proposition~\ref{closure}, and that equation~(3) of Definition~\ref{monadic} holds by Theorem~\ref{eqns} since $L$ is a Heyting algebra. The dual condition to (3) involving $\Box$ does not hold unless $L$ is a dual Heyting algebra (see Proposition~\ref{zui}). That equations (1) and (2) of Definition~\ref{monadic} hold does not follow from any results so far established, but is easily verified directly. }
\end{rmk}

In \cite{BezHard} it was shown by using amalgamation techniques that every monadic Heyting algebra is a subalgebra of a functional monadic Heyting algebra. In other words, the variety of monadic Heyting algebras is generated by the convolution algebras of the extended relational structures $(X,\nabla_X,\nabla_X)$. This can be viewed as analogous to completeness results from modal logic stating that certain varieties of modal algebras are generated by complex algebras of classes of relational structures. The key tool in such completeness results for complex algebras is the notion of canonical extensions as introduced by J\'onsson and Tarski \cite{J-T 1, J-T 2}. So far analogous questions are completely untouched for convolution algebras. We record this below as an open problem. 

\begin{problem}
Is there a procedure akin to canonical extensions for complex algebras that would provide, in some instances, results saying that certain varieties of lattices with additional operators are generated by the convolution algebras that they contain? 
\end{problem}

We next consider matters related to relation algebras. We begin with the definition of a relation algebra as given by Tarski. 

\begin{defn}
\label{relation}
A relation algebra is an algebra $(B,\wedge,\vee,\neg,0,1,;,\smile,1')$ where $(B,\wedge,\vee,\neg,0,1)$ is a Boolean algebra and 
\vspace{1ex}

\begin{enumerate}
\item $a;(b;c)=(a;b);c$
\item $a;1'=a=1';a$
\item $(a\vee b);c = (a;c)\vee(b;c)$ and $a;(b\vee c)=(a;b)\vee(a;c)$
\item $(a^\smile)^\smile = a$
\item $(a\vee b)^\smile = a^\smile\vee b^\smile$
\item $(a;b)^\smile = b^\smile;a^\smile$
\item $(a^\smile;\neg(a;b))\vee\neg b = \neg b$
\end{enumerate}
\vspace{1ex}

\noindent It is a consequence of these axioms that De Morgan's identities hold, 
\vspace{1ex}

\begin{enumerate}
\setcounter{enumi}{7}
\item $a;b\leq c\Leftrightarrow a^\smile;\neg c\leq \neg b\Leftrightarrow \neg c;b^\smile\leq \neg a$
\end{enumerate}
\end{defn}

It is well known that for a group $\mathfrak{G}=(G,\cdot,^{-1},e)$ considered as a relational structure with one ternary relation, the group multiplication $\cdot$, one binary relation $^{-1}$, and one unary relation $\{e\}$ for the group identity, that the complex algebra $\mathfrak{G}^+$ is a relation algebra. We next extend this to the convolution algebra setting. Here is the first instance where we include Heyting algebra operations in our considerations. 

\begin{prop}
For a group $\mathfrak{G}=(G,\cdot,^{-1},e)$ and complete Heyting algebra $L$, the convolution algebra $L^\mathfrak{G}$ is a Heyting algebra with pseudocomplement $\neg$ and an additional binary operation $;$, unary operation $^\smile$, and constant $1'$ that satisfies 
\vspace{1ex}

\begin{enumerate}
\item $a;(b;c)=(a;b);c$
\item $a;1'=a=1';a$
\item $(a\vee b);c = (a;c)\vee(b;c)$ and $a;(b\vee c)=(a;b)\vee(a;c)$
\item $(a^\smile)^\smile = a$
\item $(a\vee b)^\smile = a^\smile\vee b^\smile$
\item $(a;b)^\smile = b^\smile;a^\smile$
\item $(a^\smile;\neg(a;b))\vee\neg b = \neg b$
\end{enumerate}
\vspace{1ex}

\noindent The convolution algebra also satisfies the following modified form of De Morgan's identities
\vspace{1ex}

\begin{itemize}
\item[(8$^\prime$)]  $a;b\leq \neg\neg c\Leftrightarrow a^\smile;\neg c\leq \neg b\Leftrightarrow \neg c;b^\smile\leq \neg a$
\end{itemize}
\vspace{1ex}

\noindent These agree with De Morgan's identities except the first $c$ is replaced by $\neg\neg c$. Finally, this convolution algebra satisfies the original form of De Morgan's identities iff $L$ is a Boolean algebra. 
\end{prop}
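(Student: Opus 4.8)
The plan is to separate the axioms into those that are negation-free and those that genuinely use $\neg$. First, since $L$ is the lattice reduct of a complete Heyting algebra, the power $L^G$ with coordinatewise operations is again a complete Heyting algebra, and $\neg\alpha$ is the map $x\mapsto\neg(\alpha(x))$; this is the componentwise Heyting structure on $L^\mathfrak{G}$ noted after the definition of the convolution algebra, so the first assertion is immediate. Unwinding the construction for the group relational structure, the extra operations are $(\alpha;\beta)(x)=\bigvee_{a\in G}\alpha(a)\wedge\beta(a^{-1}x)$, $\alpha^\smile(x)=\alpha(x^{-1})$, and $1'=\chi_{\{e\}}$ (value $1$ at $e$, value $0$ elsewhere). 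Equations (1)--(6) involve only $;$, $^\smile$, $1'$ and the lattice operations, hence are negation-free; since $2^\mathfrak{G}\cong\mathfrak{G}^+$ by Proposition~\ref{curr} and the complex algebra of a group is classically a relation algebra, (1)--(6) hold in $2^\mathfrak{G}$, so by Theorem~\ref{eqns} they hold in $L^\mathfrak{G}$ (the case $|L|=1$ being trivial).

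For (7) I would compute directly in $L$. Writing $\gamma=\neg(\alpha;\beta)$, the identity $\neg\bigl(\bigvee_j u_j\bigr)=\bigwedge_j\neg u_j$ gives $\gamma(z)=\bigwedge_{c\in G}\neg\bigl(\alpha(c)\wedge\beta(c^{-1}z)\bigr)$, and after the substitution $a\mapsto a^{-1}$ one has $(\alpha^\smile;\gamma)(y)=\bigvee_{b\in G}\alpha(b)\wedge\gamma(by)$. For each $b$, keeping the term $c=b$ in the meet gives $\gamma(by)\le\neg\bigl(\alpha(b)\wedge\beta(y)\bigr)=\alpha(b)\to\neg\beta(y)$, using $\neg(u\wedge v)=u\to\neg v$; hence $\alpha(b)\wedge\gamma(by)\le\alpha(b)\wedge\bigl(\alpha(b)\to\neg\beta(y)\bigr)\le\neg\beta(y)$ by modus ponens. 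Joining over $b$ yields $\alpha^\smile;\neg(\alpha;\beta)\le\neg\beta$, i.e.\ (7).

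For (8$'$) the key is that $;$ is a complete operator (Proposition~\ref{operators}) and therefore has both residuals, which I would write out explicitly: $\alpha;\beta\le\delta$ iff $\beta\le\alpha\backslash\delta$ iff $\alpha\le\delta/\beta$, where $(\alpha\backslash\delta)(y)=\bigwedge_{a\in G}\alpha(a)\to\delta(ay)$ and $(\delta/\beta)(a)=\bigwedge_{t\in G}\beta(t)\to\delta(at)$. A short calculation using $\neg(u\wedge v)=u\to\neg v$ and $\neg\bigvee=\bigwedge\neg$ then identifies $\neg(\alpha^\smile;\neg\delta)=\alpha\backslash(\neg\neg\delta)$ and $\neg(\neg\delta;\beta^\smile)=(\neg\neg\delta)/\beta$, where $\neg\neg$ is applied coordinatewise. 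Combining residuation with the elementary equivalence $u\le\neg v\iff u\wedge v=0\iff v\le\neg u$ (valid coordinatewise in $L^\mathfrak{G}$) gives $a;b\le\neg\neg c\iff b\le\neg(a^\smile;\neg c)\iff a^\smile;\neg c\le\neg b$, and symmetrically $a;b\le\neg\neg c\iff a\le\neg(\neg c;b^\smile)\iff\neg c;b^\smile\le\neg a$, which is (8$'$). I expect this residual computation --- arranging the group substitutions and the Heyting identities so that the double negation lands in exactly the right spot --- to be the main obstacle; the rest is bookkeeping.

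Finally, for the last sentence: if $L$ is Boolean then so is $L^\mathfrak{G}$, so $\neg\neg c=c$ and (8$'$) is literally the De Morgan triangle (8). Conversely, assume (8) holds in $L^\mathfrak{G}$. Put $b=1'$; then $a;1'=a$ by (2), and since $(1')^\smile=1'$ and $\neg c;1'=\neg c$ (again by (2)), the equivalence of the first and third clauses of (8) becomes $a\le c\iff\neg c\le\neg a$ for all $a,c\in L^\mathfrak{G}$. Applying this with $c$ the constant function $x$ and $a$ the constant function $\neg\neg x$ makes $\neg c\le\neg a$ trivially true (both equal the constant $\neg x$, by triple negation in $L$), forcing $a\le c$, i.e.\ $\neg\neg x\le x$ in $L$; together with $x\le\neg\neg x$ this gives $\neg\neg x=x$, so $L$ is Boolean.
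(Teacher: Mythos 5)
Your proposal is correct, and for items (1)--(6) and the final Boolean characterization it follows essentially the same route as the paper: (1)--(6) via Theorem~\ref{eqns} and the fact that complex algebras of groups are relation algebras, and the converse of the last claim by setting $b=1'$ to extract $a\leq c\Leftrightarrow\neg c\leq\neg a$ and then forcing $\neg\neg x\leq x$ (the paper uses $\neg a\leq\neg\neg\neg a$ inside $L^\mathfrak{G}$ where you instantiate with constant functions; these are the same argument). Where you genuinely diverge is in (7) and (8$'$). The paper handles (7) by rewriting it as $(a^\smile;\neg(a;b))\wedge b=0$, pushing the meet inside the join by meet-continuity, and observing each joinand is of the form $u\wedge\neg v$ with $u\leq v$; you instead keep one term of the meet $\neg(\alpha;\beta)(by)=\bigwedge_c\neg(\alpha(c)\wedge\beta(c^{-1}by))$ and finish by currying and modus ponens --- the same computation in implication-style clothing. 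For (8$'$) the paper reduces all three inequalities to the single symmetric condition ``$\alpha(x)\wedge\beta(y)\wedge\neg\gamma(z)=0$ for all $x,y,z$ with $xy=z$,'' which makes the three-fold equivalence visibly a statement about the three ways of solving $xy=z$; you instead invoke residuation of the complete operator $;$ and the identities $\neg(\alpha^\smile;\neg\delta)=\alpha\backslash(\neg\neg\delta)$ and $\neg(\neg\delta;\beta^\smile)=(\neg\neg\delta)/\beta$, which I have checked are correct. Your route is slightly more structural --- it explains \emph{why} the double negation appears (it is the price of expressing the residual $\alpha\backslash\delta$ through $\neg$ and $^\smile$) and dovetails with the paper's later example on residuated operations and the identity $\alpha\backslash 0'=\neg(\alpha^\smile)$ --- at the cost of obscuring the symmetry among the three clauses that the paper's normal form makes immediate.
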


\begin{proof}
That equations (1)-(6) hold in $L^\mathfrak{G}$ is an immediate consequence of Theorem~\ref{eqns} since they hold in the complex algebra $\mathfrak{G}^+$. Equation (7) and De Morgan's identities (8$^\prime$) must be considered separately since they involve the Heyting negation $\neg$. 

Equation (7) is equivalent to $a^\smile;\neg(a;b)\leq\neg b$, and by the nature of the Heyting negation, this is equivalent to $(a^\smile;\neg(a;b))\wedge b=0$. Suppose $\alpha,\beta\in L^G$ and $x\in G$. Then 

\begin{align*}
[(\alpha^\smile;\neg(\alpha;\beta))\wedge\beta](x) &\,\,=\,\, \bigvee\{\alpha^\smile(y)\wedge\neg(\alpha;\beta)(z):yz=x\}\wedge\beta(x)\\
&\,\,=\,\,\bigvee\{\alpha^\smile(y)\wedge\beta(x)\wedge\neg(\alpha;\beta)(z):yz=x\}\\
&\,\,=\,\,\bigvee\{\alpha(y)\wedge\beta(x)\wedge\neg(\alpha;\beta)(z):y^{-1}z=x\}
\end{align*}
\vspace{-1ex}

\noindent If $y^{-1}z=x$, then $yx=z$, so $\alpha(y)\wedge\beta(x)\leq(\alpha;\beta)(z)$, and therefore $\alpha(y)\wedge\beta(x)\wedge\neg(\alpha;\beta)(z)=0$. So the expression above is equal to 0, showing that equation (7) holds. 

We now consider the modified form of De Morgan's identities (8$^\prime$). We first note that due to the nature of the Heyting negation, these are equivalent to the following.  
\vspace{-1ex}

\[(a;b)\wedge\neg c = 0 \,\,\Leftrightarrow\,\, (a^\smile;\neg c)\wedge b=0\,\,\Leftrightarrow\,\, (\neg c;b^\smile)\wedge a = 0\]
\vspace{-1ex}

Let $\alpha,\beta,\gamma\in L^G$. Having $(\alpha;\beta)\wedge\neg\gamma = 0$ is equivalent to $(\alpha;\beta)(z)\wedge\neg\gamma(z) = 0$ for all $z\in G$. Using the fact that $(\alpha;\beta)(z)=\bigvee\{\alpha(x)\wedge\beta(y):xy=z\}$, an application of meet continuity gives the first of the items below. Using the fact that for $\lambda\in L^G$ we have $\lambda^\smile(u)=\lambda(u^{-1})$, the other two items follow similarly by evaluating the left side at an arbitrary $y\in G$ for  the second item, and at an arbitrary $x\in G$ for the third item. 
\vspace{-1ex}

\begin{align*}
(\alpha;\beta)\wedge\neg\gamma = 0&\,\,\Leftrightarrow\,\,  \alpha(x)\wedge\beta(y)\wedge\neg\gamma(z)=0\,\,\mbox{for all $x,y,z$ with $xy=z$}\\
(\alpha^\smile;\neg\gamma)\wedge\beta = 0&\,\,\Leftrightarrow\,\,  \alpha(x)\wedge\neg\gamma(z)\wedge\beta(y)=0\,\,\mbox{for all $x,y,z$ with $x^{-1}z=y$}\\
(\neg\gamma;\beta^\smile)\wedge\alpha = 0&\,\,\Leftrightarrow\,\,  \neg\gamma(z)\wedge\beta(y)\wedge\alpha(x)=0\,\,\mbox{for all $x,y,z$ with $zy^{-1}=x$}
\end{align*}
\vspace{-1ex}

\noindent Since $xy=z$ iff $x^{-1}z=y$ iff $zy^{-1}=x$, these statements are equivalent. 

For the further comment about $L^\mathfrak{G}$ satisfying the original form (8) of De Morgan's identities iff $L$ is Boolean, note that if $L$ is Boolean, then $\neg\neg c = c$, so $L^\mathfrak{G}$ satisfiying the modified identities implies that it satisfies the original identities. Conversely, if $L^\mathfrak{G}$ satisfies the original identities, then taking $b=1'$ we have that $L^\mathfrak{G}$ satisfies $a\leq c\Leftrightarrow \neg c\leq \neg a$. Then since $\neg a\leq \neg\neg\neg a$ holds in any Heyting algebra, this condition gives that $\neg\neg a\leq a$ holds in $L^\mathfrak{G}$, and this implies that the Heyting reduct of $L^\mathfrak{G}$ is Boolean, and this implies that $L$ is Boolean. 
\end{proof}

\begin{defn}
A binary operation $\cdot$ on a lattice $L$ is residuated if for each $a,b\in L$ there is a largest element $a\,\backslash\, b$ in $\{c:a\cdot c\leq b\}$ and a largest element $b\,/\,a$ in $\{c:c\cdot a\leq b\}$. 
\end{defn}

There has been recent interest in the study of lattices with a residuated binary operation~\cite{Ono}. It is well known that any binary operation that is completely additive in each argument, i.e. is a complete operator, is residuated. In view of Proposition~\ref{operators}, the convolution algebra $L^\mathfrak{X}$ of any relational structure $\mathfrak{X}$ with a ternary relation over a complete Heyting algebra $L$ provides a binary complete operator. So the study of convolution algebras may provide a good source of complete Heyting algebras with additional residuated operations. We next make a small example in this direction tied to our study of relation algebras. 

\begin{examp}{\em 
For a Heyting algebra $L$ and group $\mathfrak{G}$, let $0'$ be defined to be the element $\neg 1'$ in the convolution algebra $L^\mathfrak{G}$. Thus 
\vspace{-1ex}

\[1'(x)=\begin{cases} \,\,0 & \mbox{if }x\neq e\\ \,\,1 & \mbox{if }x=e\end{cases} \quad\quad \mbox{ and }\quad\quad
0'(x)=\begin{cases} \,\,1 & \mbox{if }x\neq e\\ \,\,0 & \mbox{if }x=e\end{cases}\]
\vspace{0ex}

\noindent Note that $\alpha;\gamma\leq 0'$ iff $(\alpha;\gamma)(e)=0$. Since $(\alpha;\gamma)(e)=\bigvee\{\alpha(x^{-1})\wedge\gamma(x):x\in G\}$ it follows that $(\alpha;\gamma)\leq 0'$ iff $\gamma(x)\leq\neg\alpha(x^{-1})$ for each $x\in G$, which occurs iff $\gamma\leq\neg(\alpha^\smile)$. Applying similar reasoning to $\gamma;\alpha\leq 0'$ gives 
\vspace{-1ex}

\[ \alpha\,\backslash\, 0' \,\, = \,\, \neg(\alpha^\smile) \,\, = \,\, 0'\, / \, \alpha\]
\vspace{-1ex}

\noindent So the convolution algebra is a Heyting algebra with residuated monoidal operation $;$ with unit $1'$ and constant $0'$. Using the easily verified fact that $\neg(a^\smile)=(\neg a)^\smile$ in the convolution algebra, it additionally satisfies 
\vspace{-1ex}

\[(0'\,/\,a)\,\backslash\, 0'  = \neg\neg a = 0'\,/\,(a\,\backslash\, 0') \quad\mbox{ and }\quad 0'\,/\,a=a\,\backslash\,0'\]
\vspace{-1ex}

\noindent This is related to structures called bounded \textsc{gbi}-algebras By Galatos and Jipsen \cite{Jipsen}, but their \textsc{gbi}-algebras satisfy $(0'\,/\,a)\,\backslash\, 0'  = a = 0'\,/\,(a\,\backslash\, 0')$. So the convolution algebra $L^\mathfrak{G}$ is a \textsc{gbi}-algebra iff $L$ is Boolean. 
}
\end{examp}

Our final example of a convolution algebra is the one that originated our interest in the topic, the algebra of truth values of type-2 fuzzy sets as introduced by Zadeh \cite{Zadeh1,Zadeh2}. In the terminology of the current paper, it becomes the following. 

\begin{defn}
Let $\I = [0,1]$ be real the unit interval. We consider $\I$ as a bounded lattice, and also consider $\mathfrak{I}=(\I,\wedge,\vee,\neg,0,1)$ as a relational structure with two binary operations, one unary operation, and two nullary operations, hence with two ternary relations, one binary relation, and two unary relations. The truth value algebra for type-2 fuzzy sets is the convolution algebra $\I^\mathfrak{I}$ that is $(\I^{\I},\sqcap,\sqcup,*,1_0,1_1)$ where $\sqcap$ and $\sqcup$ are convolutions of $\wedge$ and $\vee$, $*$ is the convolution of $\neg$, and $1_0$ and $1_1$ are the convolutions of $0,1$. 
\end{defn}

In \cite{HWW}, and in many other papers referenced there, basic properties of the truth value algebra are developed. The current techniques outlined in this note not only encompass many of these, but also open the path to further results.  Suppose that $\mathfrak{J}$ is any extended relational structure over $\I$ with operations $\wedge,\vee,\neg,0,1$ and also perhaps including t-norms and co-norms. Then consider an extended convolution algebra $\I^{\mathfrak{J}*}$ where some of these operations are convoluted using joins and others with meets. Since $\I$ is a complete chain, hence a completely distributive lattice, we may apply Proposition~\ref{quota} to obtain that the negation-free equations valid in $\I^{\mathfrak{J}*}$ are exactly those valid in the extended complex algebra $\mathfrak{J}^*$. So there are powerful tools to study properties of mixed convolutions of the operations of $\I=(\I,\wedge,\vee,\neg,0,1)$ as well as mixed convolutions of various t-norms and co-norms and also of other relations on $\I$.


\begin{thebibliography}{99}
\begin{footnotesize}

\bibitem{Ban} B. Banaschewski and E. Nelson, {\em Boolean Powers as Algebras of Continuous Functions}, Dissertationes Math. (Rozprawy Mat.) 179, 1980.

\bibitem{BezHard} G. Bezhanishvili and J. Harding, {\em Functional monadic Heyting algebras}, Algebra Univers. 48 (2002), pp. 1-10.


\bibitem{Burris} S. Burris and H. P. Sankappanavar, {\em A Course in Universal Algebra}, Graduate Texts in Mathematics (78), Springer, 1981. 

\bibitem{FosterI} A. L. Foster, {\em Generalized ``Boolean'' theory of universal algebras. I. Subdirect sums and normal representation theorem},  Math. Z. 58 (1953), pp. 306-336.

\bibitem{FosterII} A. L. Foster, {\em Generalized ``Boolean'' theory of universal algebras. II. Identities and subdirect sums of functionally complete algebras}, Math. Z. 59 (1953), pp. 191-199. 

\bibitem{Ono} N. Galatos, P. Jipsen, T. Kowalski and H. Ono, {\em Residuated Lattices: An Algebraic Glimpse at Substructural Logics}, Studies in Logic and the Foundations of Mathematics (151), Elsevier, 2007.

\bibitem{MaiJohn} M. Gehrke and J. Harding, {\em Bounded lattice expansions}, J. of Algebra 238 (2001), pp. 345-371.

\bibitem{MaiBjarni} M. Gehrke and B. J\'{o}nsson, {\em Bounded distributive lattices with operators}, Math. Japon. 40 (1994), no. 2, pp. 207-215.

\bibitem{Goldblatt} R. Goldblatt, {\em Varieties of complex algebras}, Annals of Pure and Applied Logic 44 (1989), pp. 173-242. 

\bibitem{HWW} J. Harding, C. Walker, and E. Walker, {\em The Truth Value Algebra of Type-2 Fuzzy Sets: Order Convolutions of Functions on the Unit Interval}, Monographs and Research Notes in Mathematics, Chapman and Hall/CRC, 2016.

\bibitem{Jipsen} N. Galatos and P. Jipsen, {\em Distributive residuated frames and generalized bunched implication algebras}, manuscript at http://math.chapman.edu/~jipsen/preprints/GalatosJipsenDFLFEP20161024.pdf

\bibitem{Jonsson} B. J\'{o}nsson, Mathematical Reviews MR0122745.

\bibitem{J-T 1} B. J\'{o}nsson and A. Tarski, {\em Boolean algebras with operators, Part I}, American Journal of Mathematics 73 (1951), pp. 891-939.

\bibitem{J-T 2} B. J\'{o}nsson and A. Tarski, {\em Boolean algebras with operators, Part II}, American Journal of Mathematics 74 (1952), pp. 127-162.

\bibitem{Kripke} S. A. Kripke, {\em Semantical Analysis of Modal Logic I. Normal Modal Propositional Calculi}, Zeitschrift f\"{u}r Mathematische Logik und Grundlagen der Mathematik, 9 (1963), pp. 67-96.

\bibitem{Montiero} A. Monteiro and O. Varsavsky, {\em  Algebras de Heyting mon\'{a}dicas}, Actas de las X Jornadas de
la Uni\'{o}n Matem\'{a}tica Argentina, Bah\'{i}a Blanca (1957), pp. 52-62.

\bibitem{Pultr} J. Picado and A. Pultr, {\em Frames and Locales: Topology Without Points}, Frontiers in Mathematics (Book 28), Birkh\"{a}user, 2012. 

\bibitem{Raney} G. N. Raney, {\em Completely distributive lattices}, Proc. Amer. Math. Soc. 3 (1952), pp. 677-680.

\bibitem{Tarski} A. Tarski, {\em On the calculus of relations}, Journal of Symbolic Logic 6 (1941), pp. 73-89.

\bibitem{Zadeh1} L. Zadeh, {\em The concept of a linguistic variable and its application to approximate reasoning I}, Information Sciences 8 (1975), pp. 199-249. 

\bibitem{Zadeh2} L. Zadeh, {\em The concept of a linguistic variable and its application to approximate reasoning II}, Information Sciences 8 (1975), pp. 301-357. 


\end{footnotesize}
\end{thebibliography}
\end{document}